\numberwithin{equation}{section}
\theoremstyle{plain}
\newtheorem{theorem}{Theorem}[section]
\newtheorem{lemma}[theorem]{Lemma}
\newtheorem{proposition}[theorem]{Proposition}
\newtheorem{corollary}[theorem]{Corollary}
\newtheorem{bigtheorem}{Theorem}
\theoremstyle{definition}
\newtheorem{definition}[theorem]{Definition}
\newtheorem{example}[theorem]{Example}
\newtheorem{assumption}[theorem]{Assumption}
\theoremstyle{remark}
\newtheorem{remark}[theorem]{Remark}
\newtheorem{question}[theorem]{Question}
\newcommand{\R}{\mathbb{R}}
\newcommand{\Z}{\mathbb{Z}}
\newcommand{\Zmod}[1]{\Z/#1\Z}
\newcommand{\bL}{\mathbf{L}}
\newcommand{\bR}{\mathbf{R}}
\newcommand{\bU}{\mathbf{U}}
\newcommand{\bV}{\mathbf{V}}
\newcommand{\bh}{\mathbf{h}}
\newcommand{\bn}{\mathbf{n}}
\newcommand{\bp}{\mathbf{p}}
\newcommand{\bx}{\mathbf{x}}
\newcommand{\bv}{\mathbf{v}}
\newcommand{\bw}{\mathbf{w}}
\newcommand{\cA}{\mathcal{A}}
\newcommand{\cI}{\mathcal{I}}
\newcommand{\cM}{\mathcal{M}}
\newcommand{\cP}{\mathcal{P}}
\newcommand{\cR}{\mathcal{R}}
\newcommand{\sB}{\mathsf{B}}
\newcommand{\sC}{\mathsf{C}}
\newcommand{\sG}{\mathsf{G}}
\newcommand{\sfI}{\mathsf{I}}
\newcommand{\sK}{\mathsf{K}}
\newcommand{\sL}{\mathsf{L}}
\newcommand{\sV}{\mathsf{V}}
\newcommand{\sfa}{\mathsf{a}}
\newcommand{\sfc}{\mathsf{c}}
\newcommand{\sfe}{\mathsf{e}}
\newcommand{\sfh}{\mathsf{h}}
\newcommand{\sfv}{\mathsf{v}}
\newcommand{\fR}{\mathfrak{R}}
\newcommand{\fr}{\mathfrak{r}}
\renewcommand{\bar}{\overline}
\renewcommand{\hat}{\widehat}
\renewcommand{\tilde}{\widetilde}
\newcommand{\wt}{\operatorname{wt}}
\newcommand{\lcusp}{\prec}
\newcommand{\rcusp}{\succ}
\DeclareMathOperator{\aug}{Aug}
\DeclareMathOperator{\sgn}{sgn}
\DeclareMathOperator{\val}{val}
\DeclareMathOperator{\res}{Res}
\newcommand{\vPi}{{\bV\Pi}}
\renewcommand{\emptyset}{\varnothing}
\newcommand*{\DecorationScale}{0.3}
\DeclareRobustCommand*{\crossing}{{
\vcenter{\hbox{\begin{tikzpicture}[scale=\DecorationScale]
\draw[thick,-](-1,1)--(1,-1);
\draw[thick,-](-1,-1)--(1,1);
\end{tikzpicture}}}
}}
\DeclareRobustCommand*{\crossingblack}{{
\vcenter{\hbox{\begin{tikzpicture}[scale=\DecorationScale]
\draw[densely dotted, fill=gray] (0,0) circle (0.5);
\draw[thick,-](-1,1)--(1,-1);
\draw[thick,-](-1,-1)--(1,1);
\end{tikzpicture}}}
}}
\DeclareRobustCommand*{\crossingpositive}{{
\vcenter{\hbox{\begin{tikzpicture}[scale=\DecorationScale]
\draw[thick,-](-1,1)--(1,-1);
\draw[thick,-](-1,-1)--(-0.3,-0.3);
\draw[thick,-](0.3,0.3)--(1,1);
\end{tikzpicture}}}
}}
\DeclareRobustCommand*{\crossingnegative}{{
\vcenter{\hbox{\begin{tikzpicture}[scale=\DecorationScale]
\draw[thick,-](-1,-1)--(1,1);
\draw[thick,-](-1,1)--(-0.3,0.3);
\draw[thick,-](0.3,-0.3)--(1,-1);
\end{tikzpicture}}}
}}
\DeclareRobustCommand*{\crossingsingular}{{
\vcenter{\hbox{\begin{tikzpicture}[scale=\DecorationScale]
\draw[thick,-](-1,1)--(1,-1);
\draw[thick,-](-1,-1)--(1,1);
\draw[fill] (0,0) circle (5pt);
\end{tikzpicture}}}
}}
\DeclareRobustCommand*{\crossinghorizontal}{{
\vcenter{\hbox{\begin{tikzpicture}[scale=\DecorationScale]
\draw[thick,-](-1,1) arc (225:315:1.414);
\draw[thick,-](-1,-1) arc (-225:-315:1.414);
\end{tikzpicture}}}
}}
\DeclareRobustCommand*{\crossingvertical}{{
\vcenter{\hbox{\begin{tikzpicture}[scale=\DecorationScale]
\draw[thick,-](-1,-1) arc (-45:45:1.414);
\draw[thick,-](1,-1) arc (-135:-225:1.414);
\end{tikzpicture}}}
}}
\DeclareRobustCommand*{\positivekink}{{
\vcenter{\hbox{\begin{tikzpicture}[scale=\DecorationScale]
\draw[thick,-](-1,1) -- (0.3, -0.3) arc (45: -225: 0.4242);
\draw[thick,-](0.3,0.3)--(1,1);
\end{tikzpicture}}}
}}
\DeclareRobustCommand*{\negativekink}{{
\vcenter{\hbox{\begin{tikzpicture}[scale=\DecorationScale]
\draw[thick,-](0.3, -0.3) arc (45: -225: 0.4242) -- (1,1);
\draw[thick,-] (-0.3, 0.3)--(-1,1);
\end{tikzpicture}}}
}}
\DeclareRobustCommand*{\rightkink}{{
\vcenter{\hbox{\begin{tikzpicture}[scale=\DecorationScale]
\draw[thick,-](-1,1) -- (0.3, -0.3) arc (-135: 135: 0.4242);
\draw[thick,-](-0.3,-0.3)--(-1,-1);
\end{tikzpicture}}}
}}
\DeclareRobustCommand*{\leftarc}{{
\vcenter{\hbox{\begin{tikzpicture}[scale=\DecorationScale]
\draw[thick,-](0,1) arc ( 90: 275: 1);
\end{tikzpicture}}}
}}
\DeclareRobustCommand*{\horizontalline}{{
\vcenter{\hbox{\begin{tikzpicture}[scale=\DecorationScale]
\draw[thick,-] (-1, 0)--(1,0);
\end{tikzpicture}}}
}}
\DeclareRobustCommand*{\theunknot}{{
\vcenter{\hbox{\begin{tikzpicture}[scale=\DecorationScale]
\draw[thick,-] (0,0) circle (1);
\end{tikzpicture}}}
}}
\DeclareRobustCommand*{\Lcusp}{{
\vcenter{\hbox{\begin{tikzpicture}[scale=\DecorationScale]
\draw[thick,-] (1,1) arc (-45:-90:2.414) arc (90:45:2.414);
\end{tikzpicture}}}
}}
\DeclareRobustCommand*{\Lcuspfour}{{
\vcenter{\hbox{\begin{tikzpicture}[scale=\DecorationScale]
\draw[thick,-] (1,1) arc (-30:-90:2) arc (90:30:2);
\draw[thick,-] (1,0.464) arc (-60:-90:3.464) arc (90:60:3.464);
\draw[fill] (-0.732,0) circle (5pt);
\end{tikzpicture}}}
}}
\DeclareRobustCommand*{\Ldoublecusp}{{
\vcenter{\hbox{\begin{tikzpicture}[scale=\DecorationScale]
\draw[thick,-] (1,1) arc (-45:-90:2.414) arc (90:45:2.414);
\draw[thick,-] (1,-1) arc (45:90:2.414) arc (-90:-45:2.414);
\end{tikzpicture}}}
}}
\DeclareRobustCommand*{\Ldoublecuspblack}{{
\vcenter{\hbox{\begin{tikzpicture}[scale=\DecorationScale]
\draw[fill=gray] (0.45,0) circle (0.4);
\draw[thick,-] (1,1) arc (-45:-90:2.414) arc (90:45:2.414);
\draw[thick,-] (1,-1) arc (45:90:2.414) arc (-90:-45:2.414);
\end{tikzpicture}}}
}}
\DeclareRobustCommand*{\Lstackedcusp}{{
\vcenter{\hbox{\begin{tikzpicture}[scale=\DecorationScale]
\draw[thick,-] (1,1) arc (-55:-90:2) arc (90:55:2);
\draw[thick,-] (1,-1) arc (55:90:2) arc (-90:-55:2);
\end{tikzpicture}}}
}}
\DeclareRobustCommand*{\Lnestedcusp}{{
\vcenter{\hbox{\begin{tikzpicture}[scale=\DecorationScale]
\draw[thick,-] (1,1) arc (-30:-90:2) arc (90:30:2);
\draw[thick,-] (1,0.2) arc (-60:-90:1.5) arc (90:60:1.5);
\end{tikzpicture}}}
}}
\DeclareRobustCommand*{\Rcusp}{{
\vcenter{\hbox{\begin{tikzpicture}[scale=\DecorationScale]
\draw[thick,-] (-1,1) arc (-135:-90:2.414) arc (90:135:2.414);
\end{tikzpicture}}}
}}
\title{Ruling invariants for Legendrian graphs}
\author{Byung Hee An}
\address{Center for Geometry and Physics, Institute for Basic Science, Pohang 37673, Republic of Korea}
\email{anbyhee@ibs.re.kr}
\author{Youngjin Bae}
\address{Research Institute for Mathematical Sciences, Kyoto University, Kyoto 606-8502, Japan}
\email{ybae@kurims.kyoto-u.ac.jp}
\author{Tam\'as K\'alm\'an}
\address{Department of Mathematics, Tokyo Institute of Technology, Tokyo 152-8551, Japan}
\email{kalman@math.titech.ac.jp}
\begin{document}

\begin{abstract}
We define ruling invariants for even-valence Legendrian graphs in standard contact three-space. We prove that rulings exist if and only if the DGA of the graph, introduced by the first two authors, has an augmentation. We set up the usual ruling polynomials for various notions of gradedness and prove that if the graph is four-valent, then the ungraded ruling polynomial appears in Kauffman--Vogel's graph version of the Kauffman polynomial. 
Our ruling invariants are compatible with certain vertex-identifying operations as well as vertical cuts and gluings of front diagrams. 
We also show that Leverson's definition of a ruling of a Legendrian link in a connected sum of $S^1\times S^2$'s can be seen as a special case of ours.
\end{abstract}

\maketitle
\tableofcontents

\section{Introduction}

Ruling invariants for Legendrian knots and links were introduced by Chekanov and Pushkar \cite{ChekPush2005}, and independently by Fuchs \cite{Fuchs2003}.
The motivation comes from a {\em generating family}, which is a family of functions whose critical values give the front of a Legendrian knot. Rulings can be used to distinguish smoothly isotopic Legendrians even if share the same Thurston--Bennequin number and rotation number, such as Chekanov's famous pair of Legendrians of knot type $5_2$. For that reason we call ruling invariants {\em non-classical}.
 
There is another non-classical construction, the so called Chekanov--Eliashberg DG-algebra, originating from a relative version of contact homology, i.e., holomorphic curve techniques \cite{Chekanov2001}.
The homology of the DG-algebra is invariant under Legendrian isotopy and also distinguishes the above pair of Legendrians via a method called linearization of DG-algebras.

There is a deep relation between the two approaches: the existence of a ruling and the linearizability of the DG-algebra, i.e, the existence of a so called augmentation, are equivalent. This is established by Fuchs \cite{Fuchs2003}, Fuchs--Ishkhanov \cite{FI2004}, and Sabloff \cite{Sabloff2005} and extended by Leverson \cite{Leverson2016,Leverson2017}.

On the other hand, the so called ungraded ruling polynomial, which is a weighted (by genus) count of all rulings, appears as a certain sequence of coefficients of the Kauffman polynomial. 
These are leading coefficients when the upper bound for the Thurston--Bennequin number given by the Kauffman polynomial is sharp, and otherwise all zeros \cite{Rutherford2006}.
(Hence the ungraded ruling polynomial is in fact a classical invariant; to access the full power of rulings, one has to narrow their counts to only $\Z$-graded ones.)

Legendrian graphs have been studied using classical invariants \cite{OP2012}. Recently they have also drawn attention as singular Legendrians appearing in the study of Lagrangian skeleta of Weinstein manifolds \cite{Nadler2017, GPS2018}.
The first two authors developed a DG-algebra invariant for Legendrian graphs via a careful consideration of the algebraic issues that arise near the vertices of graphs \cite{AB2018}.

In this article, we extend the definition of ruling from Legendrian links to Legendrian graphs.
Of course, the main issue will be to analyze the behavior of each ruling near the vertices.
We restrict ourselves to Legendrian graphs with only even-valent vertices and demand that the ruling at each vertex be parametrized by the set of perfect matchings of the incident edges. In other words, we regard a Legendrian graph as a set of Legendrian links (with markings) which can be obtained by resolutions of vertices, indexed by a perfect matching at each vertex.

With this extension, we show the equivalence between the existence of ($\rho$-graded) rulings and of ($\rho$-graded) augmentations for Legendrian graphs. 

\begin{bigtheorem}[Theorem~\ref{theorem:equivalence of existence for graphs}]
Let $\sL$ be a bordered Legendrian graph. Then a $\rho$-graded normal ruling for $\sL$ exists if and only if a $\rho$-graded augmentation for the DG-algebra $\cA(\sL)$ exists.
\end{bigtheorem}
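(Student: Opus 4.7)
The plan is to reduce the theorem to the classical equivalence for Legendrian links (due to Fuchs, Fuchs--Ishkhanov, Sabloff, and in the presence of base points or boundary, Leverson) by systematically resolving each vertex via a perfect matching. The central idea is that both sides of the equivalence should be parametrized by the same combinatorial data at each vertex, namely a perfect matching on the incident edges, so that after fixing this data at every vertex one can appeal to the known equivalence on a resulting bordered Legendrian link.

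First, I would formalize the resolution correspondence on the ruling side. Given a bordered Legendrian graph $\sL$, any choice $M = \{M_v\}_v$ of perfect matchings at each even-valent vertex determines a bordered Legendrian link $\sL_M$ obtained by replacing the vertex with the strand pattern dictated by $M_v$. By our definition, a $\rho$-graded normal ruling of $\sL$ is precisely a pair $(M, r_M)$ where $r_M$ is a $\rho$-graded normal ruling of $\sL_M$ in the usual sense; the normality and gradedness conditions at ordinary crossings (all of which lie away from the vertices) translate unchanged.

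Second, and this is where the real work lies, I would establish the analogous decomposition on the algebraic side: a $\rho$-graded augmentation $\varepsilon \colon \cA(\sL) \to \Z/2$ is equivalent to the datum of a matching $M$ together with a $\rho$-graded augmentation of $\cA(\sL_M)$. Concretely, inside $\cA(\sL)$ the vertex generators from \cite{AB2018} admit an internal differential whose augmentation-level solutions can be shown to correspond bijectively to perfect matchings $M_v$; thus any augmentation of $\cA(\sL)$ picks out a matching $M$, and the remaining generators (those associated to strands and crossings) determine an augmentation of $\cA(\sL_M)$. The technical heart of this step is constructing a DGA-level map between $\cA(\sL)$ and $\cA(\sL_M)$, for each $M$, that intertwines augmentations in the appropriate sense, and ruling out that different matchings can be combined in the same augmentation of $\cA(\sL)$.

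Third, for each fixed $M$ I would invoke the classical ruling--augmentation equivalence for the bordered Legendrian link $\sL_M$; combining with the first two steps yields the desired equivalence for $\sL$. The main obstacle I anticipate is Step 2: because the vertex algebra in \cite{AB2018} is engineered to encode \emph{all} matchings simultaneously, showing that an augmentation necessarily localizes to a single matching at each vertex requires a careful case analysis of the vertex differential, and the argument must be inductive to handle several vertices at once. A secondary subtlety is compatibility with the bordered structure: the border generators, the cyclic ordering at the boundary, and the $\rho$-grading all have to flow coherently through the resolution $\sL \rightsquigarrow \sL_M$ so that Leverson's bordered version of the classical equivalence can be applied verbatim to $\sL_M$.
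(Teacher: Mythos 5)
There is a genuine gap, and it sits exactly where you place ``the real work'': your Step 2 is asserted rather than proved, and its central claim is not correct as stated. The augmentations of the internal vertex DG-subalgebra $\cI_\sfv$ of \cite{AB2018} do \emph{not} correspond bijectively to perfect matchings of the half-edges. Already for a bivalent vertex the relation $\epsilon(\partial\sfv_{i,2})=0$ forces $\epsilon(\sfv_{i,1})\epsilon(\sfv_{i+1,1})=\pm1$, which has several solutions over $\Z$ (and a positive-dimensional family over a general field) while there is only one perfect matching; for higher valence the augmentation set of $\cI_\sfv$ is stratified by matchings via a Bruhat/LU-type decomposition rather than being in bijection with them. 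So extracting a single matching $M$ from an augmentation of $\cA(\sL)$, and then producing an augmentation of a DGA $\cA(\sL_M)$ of the marked resolution (which you would also need to define, since the paper only attaches DGAs to graphs, not to marked resolutions), is a substantial project that your proposal leaves entirely open. Without it, neither direction of the equivalence goes through along your route.

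The paper avoids this analysis altogether. Its proof chains together five previously established equivalences: rulings and augmentations of a bordered graph agree with those of its closure (Corollary~\ref{corollary:rulings for bordered Legendrian graphs} and Theorem~\ref{theorem:equivalence of augmentations}); the closed graph is then \emph{doubled} into a Legendrian link $[D(\bL)]$ in $\#^k(S^1\times S^2)$ in Gompf standard form, and the graph has a ruling iff the double does (Corollary~\ref{cor:double ruling}), while $\cA(\bL)$ has an augmentation iff the Ekholm--Ng DGA of the double does, by the universal property of the pushout identifying the two copies of each vertex subalgebra (Lemma~\ref{lemma:equivalence of augmentations}); finally Leverson's theorem (Theorem~\ref{theorem:equivalence of existence for links}) is applied to the link in $\#^k(S^1\times S^2)$. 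The point of the doubling is precisely that the vertex subalgebras get coequalized in pairs, so one never has to classify augmentations of a single $\cI_\sfv$. If you want to pursue your vertex-by-vertex localization, it would yield a finer statement (a correspondence of sets, not merely of existence), but you would first have to supply the missing analysis of the augmentation variety of the vertex algebra and the comparison map to the DGA of a marked resolution.
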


Kauffman and Vogel introduced a polynomial invariant for four-valent graphs embedded in $\R^3$ which generalizes the two-variable Kauffman polynomial of links. We also show that the ungraded
ruling polynomial can be realized as a certain sequence of coefficients of this topological graph invariant.

\begin{bigtheorem}[Theorem~\ref{theorem:ruling and kauffman polynomial for graphs}]
Let $\sL$ be a regular front projection of a four-valent Legendrian graph.
The ungraded $(\rho=1)$ ruling polynomial $R_1(\sL)$ for $\sL$ is the same as the coefficient of $a^{-\mathbf{tb}(\sL)-1}$ ($a^{-1}$, resp.) in the shifted Kauffman--Vogel polynomial $z^{-1}F_\sL$ (unnormalized polynomial $z^{-1}[\sL]$, resp.) after replacing $A$ and $B$ with $(z-1)$ and $-1$, respectively.
\end{bigtheorem}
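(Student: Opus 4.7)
The strategy is to deduce this from Rutherford's theorem for Legendrian links by using the Kauffman--Vogel vertex skein at each four-valent vertex of $\sL$. Applying inductively at each vertex $v$ the relation
\[
    [\sL] = A\,[\sL^v_H] + B\,[\sL^v_V] + [\sL^v_X],
\]
where $\sL^v_H$, $\sL^v_V$, and $\sL^v_X$ denote the diagrams obtained from $\sL$ by replacing $v$ with a horizontal smoothing, a vertical smoothing, and a transverse crossing respectively, and then substituting $A=z-1$ and $B=-1$, expresses $[\sL]$ as a $\Z[z]$-linear combination of Kauffman polynomials of Legendrian links:
\[
    [\sL]\big|_{A=z-1,\,B=-1} = \sum_\sigma (z-1)^{n_H(\sigma)}(-1)^{n_V(\sigma)}\,[L_\sigma],
\]
where $\sigma$ runs over assignments of one of $H$, $V$, $X$ to each vertex and $n_H(\sigma)$, $n_V(\sigma)$ count the corresponding resolutions.

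I would then apply Rutherford's theorem to each $L_\sigma$: the coefficient of $a^{-\mathbf{tb}(L_\sigma)-1}$ in $z^{-1}F_{L_\sigma}$ (equivalently of $a^{-1}$ in $z^{-1}[L_\sigma]$ after Kauffman normalization) is the ungraded ruling polynomial $R_1(L_\sigma)$. Assuming the Thurston--Bennequin number of the graph (as defined earlier in the paper) is compatible with the resolutions so that $\mathbf{tb}(L_\sigma) = \mathbf{tb}(\sL)$ holds for all resolutions contributing to the top $a$-coefficient, extracting the coefficient of $a^{-\mathbf{tb}(\sL)-1}$ in $z^{-1}F_\sL$ yields
\[
    \sum_\sigma (z-1)^{n_H(\sigma)}(-1)^{n_V(\sigma)}\,R_1(L_\sigma).
\]

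It remains to match this sum with $R_1(\sL)$. By the paper's definition, a ruling of $\sL$ consists of a perfect matching at each four-valent vertex together with a compatible ruling on the resulting resolved link, and the three perfect matchings of the four incident edges correspond bijectively to the three Kauffman--Vogel resolutions $H$, $V$, $X$. The principal obstacle will be to verify that the weight assigned by the paper's definition of $R_1(\sL)$ to each vertex matching reproduces the Kauffman--Vogel substitution weight---namely $(z-1)$, $-1$, or $1$ for the horizontal, vertical, and crossing matching respectively. This is a local combinatorial check, most cleanly carried out by unpacking the state sum at a single vertex whose four incident edges are closed up in a standard way; a secondary concern is the Thurston--Bennequin bookkeeping above, which should fall out of the paper's conventions on cusp counts at each of the three types of vertex resolutions.
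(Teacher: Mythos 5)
Your overall route --- resolve each vertex by the Kauffman--Vogel skein, induct on the number of vertices, and invoke Rutherford's theorem (Theorem~\ref{theorem:ruling and kauffman polynomial for links}) for the resulting links --- is exactly the one the paper takes. However, the step you defer as ``a local combinatorial check'' is the entire mathematical content of the argument, namely Lemma~\ref{lemma:skein_relation_of_ruling}, and your sketch of what that check should yield is not correct as stated. There are two problems. First, a convention issue: the paper's skein (Definition~\ref{definition:4-valent spatial skein}) is $[\mathrm{vertex}]=[\mathrm{crossing}]-A[\mathrm{horizontal}]-B[\mathrm{vertical}]$, so after substituting $A=z-1$, $B=-1$ the link-side coefficients are $1$, $-(z-1)$ and $+1$; you wrote the skein with the opposite signs on $A$ and $B$ and accordingly guess vertex weights $1$, $(z-1)$, $-1$, which do not match the statement being proved. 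Second, and more substantively, the three vertex resolutions entering the definition of $R_1(\sL)$ each enter with weight $+1$, but they are not the three unmarked Kauffman--Vogel smoothings: for a vertex in cusp position the resolution corresponding to the ``transverse'' matching is a diagram with a \emph{marked} crossing, at which switches are forbidden, so its ruling polynomial is not that of the plain crossing diagram. The missing bridge is the identity
\[
R_1(\text{unmarked crossing})=R_1(\text{marked crossing})+z\,R_1(\text{$0$-resolution at that crossing}),
\]
obtained by splitting rulings according to whether they switch there. Substituting this into the plain sum of $R_1$ over the three resolutions is precisely what converts the weights $(1,1,1)$ on (marked crossing, smoothing, smoothing) into $(1,-(z-1),+1)$ on the unmarked diagrams and closes the match with the Kauffman--Vogel substitution. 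Without it, your state-sum comparison does not go through.

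Your secondary concern about the Thurston--Bennequin bookkeeping is also not resolvable in the form you state it: the resolutions $L_\sigma$ do \emph{not} all have the same total writhe (the crossing resolution has one more crossing than the smoothings). The paper sidesteps this by running the induction on the unnormalized bracket and extracting the fixed coefficient of $a^{-1}$, which is legitimate because $\deg_a[\cdot]\le -1$ holds uniformly for all Legendrian links and $4$-valent Legendrian graphs under the paper's conventions (the extra kink at each right cusp); the normalized statement then follows from the definition of $\mathbf{w}(\sL)$ via the virtual resolution. Recasting your argument in terms of $[\cdot]$ and the $a^{-1}$ coefficient repairs that part of the plan.
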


The paper is organized as follows: In Section~2, we introduce basic concepts of (bordered) Legendrian graphs with Maslov potential. For each {\em matching} datum at the vertex, we assign a corresponding {\em resolution}, a bordered smooth Legendrian with {\em marking}.

In Section~3, we define ruling invariants for Legendrian graphs by considering all resolutions of bordered Legendrian graphs respecting the grading condition.
We also discuss the relation between our and Leverson's ruling invariant for Legendrian links in $\#(S^1\times S^2)$.

In Section~4, we first recall the DGA associated to a Legendrian graph, and establish the equivalence between the existence of a ruling for the Legendrian graph and the existence of an augmentation of its DGA.
In particular, when the Legendrian graph is four-valent, we show that the ruling polynomial appears as a certain coefficient of the Kauffman--Vogel polynomial of the underlying graph.

Section~5 is devoted to showing that the resolutions defined in Section~2 are compatible with the Reidemeister moves at the vertex, which implies the invariance of the ruling invariant.

\emph{Acknowledgments}: 
The first author is supported by IBS-R003-D1.
The second author is supported by Japan Society for the Promotion of Science (JSPS) International Research Fellowship Program, and he thanks Research Institute for Mathematical Sciences, Kyoto University for its warm hospitality.
The third author is supported by JSPS Grant-in-Aid for Scientific Research C (no.\ 17K05244).

\section{Bordered Legendrian graphs}

We work with the standard tight contact structure $\ker(dz-y\,dx)$ of $\R^3$. We use the front projection to $\R^2_{xz}$. A {\em cusp} along a Legendrian curve is a point where the tangent is parallel to the $y$-axis.

\subsection{Bordered Legendrian graphs}

\begin{definition}
A {\em bordered Legendrian graph $\sL$ of type $(\ell,r)$} is a Legendrian graph embedded in $[-M,M]\times \R^2_{yz}$ for some $M>0$ such that all cusps and vertices are contained in the interior and $\sL$ intersects the $x=-M$ and $x=M$ planes exactly at $\ell$ and $r$ points, respectively. We also assume these intersections to be perpendicular, which implies that they occur at points with $y$-coordinate equal to $0$. In symbols,
\begin{align*}
\#(\sL\cap (\{-M\}\times \R^2_{yz})) &= \#(\sL\cap (\{(-M,0)\}\times \R_z))=\ell \quad\text{and}\\
\#(\sL\cap (\{M\}\times \R^2_{yz})) &= \#(\sL\cap (\{(M,0)\}\times \R_z))=r
\end{align*}
We say that two bordered Legendrian graphs of the same type are {\em equivalent} if they are isotopic through bordered Legendrian graphs.

We denote the sets of vertices, double points (of the front projection), left and right cusps by $\sV_\sL, \sC_\sL, \lcusp_\sL$ and $\rcusp_\sL$, respectively.
If $\sV_\sL=\emptyset$, then we call $\sL$ a {\em bordered Legendrian link}.
\end{definition}

\begin{assumption}
We assume that $\sL$ has a {\em regular} front projection, so that 
\begin{enumerate}
\item there are no triple points;
\item cusps and vertices are not double points;
\item all cusps, vertices and double points have pairwise different $x$-coordinates.
\end{enumerate}
\end{assumption}

\begin{remark}
If $\ell=r=0$, then $\sL$ is a usual Legendrian graph.
\end{remark}

\begin{example}
As a special case, one may consider the bordered Legendrian graphs $\mathsf{0}_\ell$ and $\mathsf{\infty}_r$ of types $(0,\ell)$ and $(r,0)$, respectively, whose underlying graphs are the $\ell$- and $r$-corollas as follows:
\begin{align*}
\mathsf{0}_\ell&\coloneqq\vcenter{\hbox{\includegraphics{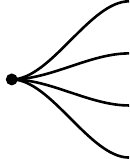}}}~;&
\mathsf{\infty}_r&\coloneqq\vcenter{\hbox{\includegraphics{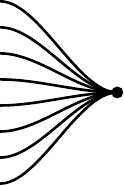}}}~.
\end{align*}
Furthermore, we will consider the bordered Legendrian graph $\sfI_n$ of type $(n,n)$ consisting of $n$ parallel (arbitrarily short) arcs:
\[
\sfI_n\coloneqq \vcenter{\hbox{\includegraphics{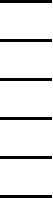}}}~.
\]
\end{example}

Notice that for each Legendrian graph $\sL$ of type $(\ell,r)$, there are two natural inclusions $\iota_L\colon \sfI_\ell\to \sL$ and $\iota_R\colon \sfI_r\to \sL$ ``near the border'':
\[
\sfI_\ell\stackrel{\iota_L}\longrightarrow \sL \stackrel{\iota_R}\longleftarrow \sfI_r.
\]

It is not hard to see that two bordered Legendrian graphs are equivalent if and only if their front projections are related through a sequence of the following Reidemeister moves:
\begingroup
\allowdisplaybreaks
\begin{align*}
\vcenter{\hbox{\includegraphics{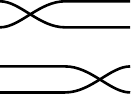}}}&\stackrel{\rm(0_a)}\longleftrightarrow
\vcenter{\hbox{\includegraphics{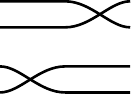}}}&
\vcenter{\hbox{\includegraphics{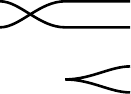}}}&\stackrel{\rm(0_b)}\longleftrightarrow
\vcenter{\hbox{\includegraphics{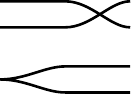}}}&
\vcenter{\hbox{\includegraphics{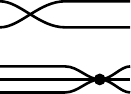}}}&\stackrel{\rm(0_c)}\longleftrightarrow
\vcenter{\hbox{\includegraphics{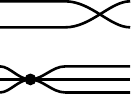}}}\\[1pc]
\vcenter{\hbox{\includegraphics{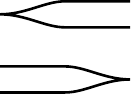}}}&\stackrel{\rm(0_d)}\longleftrightarrow
\vcenter{\hbox{\includegraphics{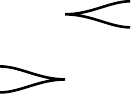}}}&
\vcenter{\hbox{\includegraphics{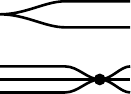}}}&\stackrel{\rm(0_e)}\longleftrightarrow
\vcenter{\hbox{\includegraphics{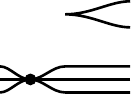}}}&
\vcenter{\hbox{\includegraphics{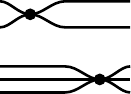}}}&\stackrel{\rm(0_f)}\longleftrightarrow
\vcenter{\hbox{\includegraphics{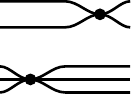}}}\\[1pc]
\vcenter{\hbox{\includegraphics{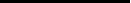}}}&\stackrel{\rm(I)}\longleftrightarrow
\vcenter{\hbox{\includegraphics{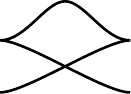}}}&
\vcenter{\hbox{\includegraphics{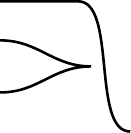}}}&\stackrel{\rm(II)}\longleftrightarrow
\vcenter{\hbox{\includegraphics{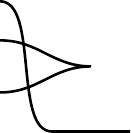}}}&
\vcenter{\hbox{\includegraphics{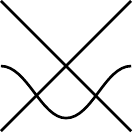}}}&\stackrel{\rm(III)}\longleftrightarrow
\vcenter{\hbox{\includegraphics{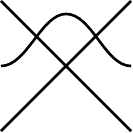}}}\\
\vcenter{\hbox{\includegraphics{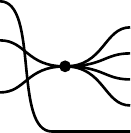}}}&\stackrel{\rm(IV)}\longleftrightarrow
\vcenter{\hbox{\includegraphics{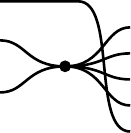}}}&
\vcenter{\hbox{\includegraphics{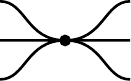}}}&\stackrel{\rm(V)}\longleftrightarrow
\vcenter{\hbox{\includegraphics{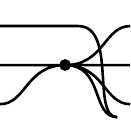}}}~.
\end{align*}
\endgroup

\subsubsection{Concatenations and closures}
Let $\sL_1$ and $\sL_2$ be two bordered Legendrian graphs of types $(\ell, r)$ and $(r,s)$, respectively. Then there is a canonical operation, called {\em gluing}, which is a concatenation of $\sL_1$ and $\sL_2$, and can also be regarded as a {\em push-out} of the following diagram:
\[
\begin{tikzcd}
\sfI_r\arrow[r]\arrow[d] & \sL_2\ar[d,dashed]\\
\sL_1\ar[r,dashed]& \sL
\end{tikzcd}
\]
We will write $\sL = \sL_1\coprod_{\sfI_r}\sL_2$ or simply $\sL=\sL_1\cdot \sL_2$.

\begin{definition}
Let $\sL$ be a bordered Legendrian graph of type $(\ell,r)$.
The {\em closure} $\hat \sL$ of $\sL$ is the Legendrian graph obtained by gluing $\mathsf{0}_\ell$ and $\mathsf{\infty}_r$ to $\sL$ on the left and right, respectively.
\end{definition}

\begin{example}
Let $\sL$ be the following bordered Legendrian graph of type $(4,8)$:
\[
\sL=\vcenter{\hbox{\includegraphics{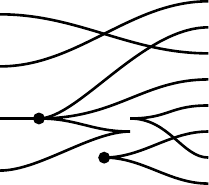}}}~.
\]
Then its closure looks as follows:
\[
\hat \sL = \mathsf{0}_4 \cdot \sL \cdot \mathsf{\infty}_8=
\vcenter{\hbox{\includegraphics{S_ell.pdf}}}
\vcenter{\hbox{\includegraphics{L_5_7.pdf}}}
\vcenter{\hbox{\includegraphics{S_r.pdf}}}~.
\]
\end{example}

The closures of $\mathsf{0}_\ell$ and $\mathsf{\infty}_l$ are equivalent and will be denoted by $\Theta_\ell$:
\[
\hat {\mathsf{0}}_\ell = \Theta_\ell = 
\vcenter{\hbox{\includegraphics{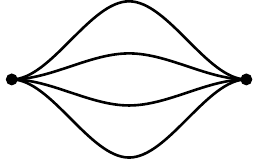}}}\ \ =\hat \sfI_\ell=
\hat {\mathsf{\infty}}_\ell.
\]

\subsubsection{Maslov potentials and markings}
Let $\fR$ denote either $\Z$ or $\Z_m$ for some $m\ge 2$, generated by $1_\fR$.

\begin{definition}
A {\em Maslov potential $\mu$} is an $\fR$-valued function on the components of $\sL\setminus \left(\sV_\sL\cup \lcusp_\sL\cup \rcusp_\sL\right)$ such that
\begin{align*}
\mu(\alpha)&=\mu(\alpha')+1_\fR
\end{align*}
whenever $\alpha$ meets $\alpha'$ at a cusp so that locally, $z$-values along $\alpha$ exceed those along $\alpha'$.
\end{definition}

It is easy to see that a Maslov potential $\mu$ given on $\sL$ induces a Maslov potential $\hat\mu$ on $\hat \sL$.
Moreover, a Maslov potential defines a grading for each double point of the front projection (the set of which will be denoted with $\sC$) by
\begin{align}
|\sfc|\coloneqq \mu(\alpha)-\mu(\alpha')\in\fR,\label{equation:grading}
\end{align}
where $\alpha$ and $\alpha'$ are the arcs of $\sL$ whose projections intersect at $\sfc$, furthermore the preimage of $\sfc$ on $\alpha$ has a lower $y$-coordinate than the preimage on $\alpha'$.

For each Legendrian graph $(\sL,\mu)$ of type $(\ell,r)$, with Maslov potential, we obtain two trivial Legendrian graphs $(\sfI_\ell, \iota_L^*(\mu))$ and $(\sfI_r, \iota_R^*(\mu))$, with potentials, by pulling $\mu$ back via the canonical inclusions $\iota_L$ and $\iota_R$. In other words, we have
\[
\iota_L^*(\sL,\mu) \longleftarrow (\sL,\mu)\longrightarrow \iota_R^*(\sL,\mu),
\]
where 
\[
\iota^*_L(\sL,\mu)=(\sfI_\ell, \iota_L^*(\mu))\quad\text{and}\quad
\iota^*_R(\sL,\mu)=(\sfI_r, \iota_R^*(\mu)).
\]

Let $(\sL_1,\mu_1)$ and $(\sL_2,\mu_2)$ be two Legendrian graphs, of respective types $(\ell,r)$ and $(r,s)$, with Maslov potentials. 
Assume furthermore that the two induced Maslov potentials $\iota_R^*(\mu_1)$ and $\iota_L^*(\mu_2)$ on $\sfI_r$ coincide.
In this case we define the gluing $(\sL,\mu)\coloneqq(\sL_1,\mu_1)\cdot(\sL_2,\mu_2)$ by
$\sL\coloneqq\sL_1\cdot \sL_2$ and 
$\mu\coloneqq \mu_1\amalg \mu_2$.

\begin{definition}[Marked bordered Legendrian graphs]\label{def:marking}
Let $\sC=\sC(\sL)$ be the set of crossings of $\sL$. For a subset $\sB$ of $\sC$, the pair $\bL=((\sL,\mu), \sB)$ is called a {\em marked} bordered Legendrian graph.

For simplicity we put $\bL=(\sL,\mu)$ if $\bL$ has no markings, i.e., when $\sB=\emptyset$.
\end{definition}

We will call crossings in $\sB$ and $\sC\setminus \sB$ {\em marked} and {\em regular} crossings, respectively, and represent them in our diagrams as follows:
\begin{align*}
\crossingblack &\in\sB,& \crossing &\in\sC\setminus \sB.
\end{align*}

The canonical inclusions $\iota_L$ and $\iota_R$ induce two marked bordered Legendrian graphs
\[
\iota_L^*(\bL)=(\sfI_\ell, \iota_L^*(\mu))\longleftarrow \bL\longrightarrow \iota_R^*(\bL)=(\sfI_r, \iota_R^*(\mu)).
\]

Let $\bL_1=((\sL_1,\mu_1),\sB_1)$ and $\bL_2=((\sL_2,\mu_2),\sB_2)$ be marked bordered Legendrian graphs of types $(\ell,r)$ and $(r,s)$, respectively. If $\iota_R^*(\bL_1)=\iota_L^*(\bL_2)$, then we define their concatenation $\bL\coloneqq((\sL,\mu),\sB)$ by
\begin{align*}
(\sL,\mu)&\coloneqq (\sL_1,\mu_1)\cdot(\sL_2,\mu_2),&
\sB&\coloneqq \sB_1\amalg\sB_2.
\end{align*}
We will often shorten the notation to $\bL=\bL_1\cdot\bL_2$.

For $\bL$ of type $(\ell,r)$, by gluing $\mathbf{0}_\ell$ and $\pmb{\infty}_r$ to the left and right of $\bL$, we obtain the closure $\hat\bL$ as before:
\begin{align*}
\hat\bL&\coloneqq \mathbf{0}_\ell(\iota_L^*(\mu))\cdot \bL\cdot \pmb{\infty}_r(\iota_R^*(\mu)),&
\mathbf{0}_\ell(-)&\coloneqq(\mathsf{0}_\ell, -),&
\pmb{\infty}_\ell(-)&\coloneqq(\mathsf{\infty}_\ell, -).
\end{align*}

\begin{definition}[Equivalences of marked bordered Legendrian graphs]\label{Definition:equivalence of marked graphs}
We say that two marked bordered Legendrian graphs $\bL_1$ and $\bL_2$ are {\em equivalent} if one can be transformed to the other via a sequence of usual Reidemeister moves and \emph{marked Reidemeister moves} depicted in Figure~\ref{figure:marked Reidemeister moves}.
\end{definition}

\begin{figure}[ht]
\begin{align*}
\tag{$0$}
\vcenter{\hbox{\includegraphics{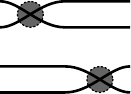}}}&\longleftrightarrow
\vcenter{\hbox{\includegraphics{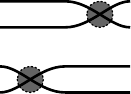}}}&
\vcenter{\hbox{\includegraphics{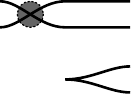}}}&\longleftrightarrow
\vcenter{\hbox{\includegraphics{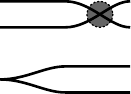}}}
\end{align*}

\begin{align*}
\tag{$\mathrm{II}$}
\begin{tikzcd}[ampersand replacement=\&]
\ 
\&\quad\vcenter{\hbox{\input{R2_B_5_gen_input.tex}}}\arrow[d,leftrightarrow]\\
\quad\vcenter{\hbox{\input{R2_B_2_gen_input.tex}}}\arrow[r,leftrightarrow] \& \quad\vcenter{\hbox{\input{R2_B_1_gen_input.tex}}}\arrow[r,leftrightarrow] \& \quad\vcenter{\hbox{\input{R2_B_4_gen_input.tex}}}\\
\ \&\quad\vcenter{\hbox{\input{R2_B_3_gen_input.tex}}}\arrow[u,leftrightarrow]
\end{tikzcd}
\end{align*}

\begin{align*}
\tag{$\mathrm{III}$}
\vcenter{\hbox{\includegraphics{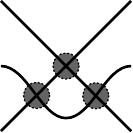}}}&\longleftrightarrow
\vcenter{\hbox{\includegraphics{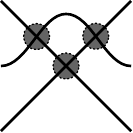}}}&
\vcenter{\hbox{\includegraphics{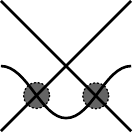}}}&\longleftrightarrow
\vcenter{\hbox{\includegraphics{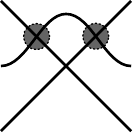}}}\\
\vcenter{\hbox{\includegraphics{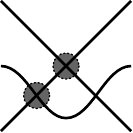}}}&\longleftrightarrow
\vcenter{\hbox{\includegraphics{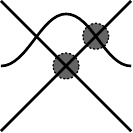}}}&
\vcenter{\hbox{\includegraphics{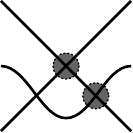}}}&\longleftrightarrow
\vcenter{\hbox{\includegraphics{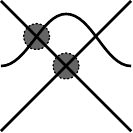}}}
\end{align*}

\begin{align*}
\tag{$\rm S$}
\vcenter{\hbox{\includegraphics{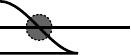}}}\longleftrightarrow
\vcenter{\hbox{\includegraphics{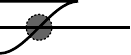}}}
\end{align*}

\begin{align*}
\tag{$\rm T$}
\vcenter{\hbox{\includegraphics{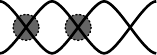}}}&\longleftrightarrow
\vcenter{\hbox{\includegraphics{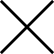}}}&
\vcenter{\hbox{\includegraphics{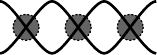}}}&\longleftrightarrow
\vcenter{\hbox{\includegraphics{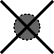}}}
\end{align*}
\caption{Reidemeister moves for marked Legendrian graphs: In the move (II), $n=0,1,2,\dots$.}
\label{figure:marked Reidemeister moves}
\end{figure}

\begin{remark}\label{remark:marked Reidemeister move T}
The marked Reidemeister move $(\mathrm{T})$ does not imply one can cancel out two subsequent black crossings.
\[
\begin{tikzpicture}[baseline=-.5ex]
\draw[thick,rounded corners] (-0.5,0.25) -- (0,-0.25) -- (0.5,0.25);
\draw[thick,rounded corners] (-0.5,-0.25) -- (0,0.25) -- (0.5,-0.25);
\draw[fill,densely dotted,opacity=0.6] (-0.25,0) circle (4pt) (0.25,0) circle (4pt);
\end{tikzpicture}
\neq
\begin{tikzpicture}[baseline=-.5ex]
\draw[thick] (-0.5,0.25) to[out=-45,in=180] (0,0.1) to[out=0,in=225] (0.5,0.25);
\draw[thick] (-0.5,-0.25) to[out=45,in=180] (0,-0.1) to[out=0,in=-225] (0.5,-0.25);
\end{tikzpicture}
\]
\end{remark}


\subsection{Resolution of a vertex}
For convenience's sake, let us denote the set of integers $\{1,\dots, 2n\}$ by $[2n]$.

\begin{definition}[Matchings]
Let $X$ be a finite set. A {\em matching} $\phi$ on $X$ is an involution which can be expressed as
\[
\phi=\{\{x_1,\phi(x_1)\},\dots,\{x_m,\phi(x_m)\}\},
\]
where $x_i$ is not necessarily different from $\phi(x_i)$.

We say that $\phi$ is {\em perfect} if $\phi$ has no fixed points, and denote the set of all perfect matchings on $X$ by $\cP_X$.
\end{definition}

Let $\sL$ be a (bordered) Legendrian graph and $\sfv\in \sV_\sL$ be a vertex.
We say that $\sfv$ is of type $(\ell,r)$ if $\sfv$ looks locally as follows:
\[
(\sL_\sfv,\sB_\sfv=\emptyset)\coloneqq\vcenter{\hbox{
\begingroup%
  \makeatletter%
  \providecommand\color[2][]{%
    \errmessage{(Inkscape) Color is used for the text in Inkscape, but the package 'color.sty' is not loaded}%
    \renewcommand\color[2][]{}%
  }%
  \providecommand\transparent[1]{%
    \errmessage{(Inkscape) Transparency is used (non-zero) for the text in Inkscape, but the package 'transparent.sty' is not loaded}%
    \renewcommand\transparent[1]{}%
  }%
  \providecommand\rotatebox[2]{#2}%
  \ifx\svgwidth\undefined%
    \setlength{\unitlength}{97.20234362bp}%
    \ifx\svgscale\undefined%
      \relax%
    \else%
      \setlength{\unitlength}{\unitlength * \real{\svgscale}}%
    \fi%
  \else%
    \setlength{\unitlength}{\svgwidth}%
  \fi%
  \global\let\svgwidth\undefined%
  \global\let\svgscale\undefined%
  \makeatother%
  \begin{picture}(1,0.48924235)%
    \put(0,0){\includegraphics[width=\unitlength,page=1]{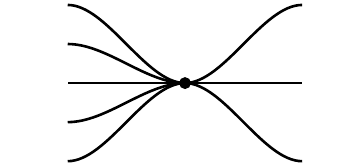}}%
    \put(0.52492355,0.28166138){\color[rgb]{0,0,0}\makebox(0,0)[lb]{\smash{$\sfv$}}}%
    \put(0.91776693,0.47263861){\color[rgb]{0,0,0}\makebox(0,0)[lb]{\smash{\tiny$1$}}}%
    \put(0.92166402,0.23978682){\color[rgb]{0,0,0}\makebox(0,0)[lb]{\smash{\tiny$2$}}}%
    \put(0.9236126,0.00596053){\color[rgb]{0,0,0}\makebox(0,0)[lb]{\smash{\tiny$r$}}}%
    \put(0.01851121,0.47458673){\color[rgb]{0,0,0}\makebox(0,0)[lb]{\smash{\tiny$r+1$}}}%
    \put(0.01851121,0.35182786){\color[rgb]{0,0,0}\makebox(0,0)[lb]{\smash{\tiny$r+2$}}}%
    \put(0.01851121,0.0040124){\color[rgb]{0,0,0}\makebox(0,0)[lb]{\smash{\tiny$r+\ell$}}}%
  \end{picture}%
\endgroup%
}}~.
\]
We note that here, the labels of the incident edges are dictated by consideration of the Lagrangian projection.

We require that $\ell+r$ be {\em even}, say $2n$, but this is not necessary for $\ell$ and $r$. 
Let us denote the set of perfect matchings of half-edges adjacent to $\sfv$ by $\cP_\sfv$.
Then the labelling convention described above induces the bijection
\[
\cP_\sfv\simeq\cP_{[2n]}.
\]

Next, we describe ways of resolving $\sfv$, indexed by the set of perfect matchings.
For a given perfect matching $\phi \in \cP_{[2n]}$, we split $[2n]$ into three $\phi$-invariant subsets, 
\(
[2n]=L\amalg B\amalg R
\), where
\begin{align*}
L=L(\phi)&\coloneqq\{ i\in[2n] \mid i, \phi(i) \ge r+1\};\\
B=B(\phi)&\coloneqq\{ i\in[2n] \mid (i\ge r+1\iff r\ge\phi(i))\};\\
R=R(\phi)&\coloneqq\{ i\in[2n] \mid r\ge i, \phi(i)\}.
\end{align*}

If we define the integers $a$, $b$ and $c$ as
\begin{align*}
2a&\coloneqq \#(L), &
2b&\coloneqq \#(B), &
2c&\coloneqq \#(R),
\end{align*}
then it is obvious that $\ell=2a+b$ and $r=b+2c$.

Let us fix an order of the set of matched pairs whose union is $L$.
For the first pair $\{i,j\}$ in $L$, we consider a bordered Legendrian as depicted in Figure~\ref{figure:marked right cusp}, which we call a {\em marked right cusp}. Here the $i$-th and $j$-th edges are made to form a cusp and the resulting crossings are marked as in Definition~\ref{def:marking}. The endpoints on the right retain their labels in $\{r+1,\ldots,r+\ell\}\setminus\{i,j\}$.

Then, we concatenate another marked right cusp for the next pair in the order and so on.
One can easily show that the resulting Legendrian $(\sL_\sfv^\rcusp,\sB_\sfv^\rcusp)$ of type $(\ell,b)$ is invariant under the changes of the order of pairs in the sense of Definition~\ref{Definition:equivalence of marked graphs}.
For examples, according to whether two matched pairs are nested or linked, we have the following sequences of marked Reidemeister moves:
\[
\begin{tikzcd}
\begin{tikzpicture}[baseline=-.5ex]
\begin{scope}
\draw[thick] (-1,0.7) to[out=0,in=180] (0,0.5);
\draw[thick] (-1,0.5) to[out=0,in=180] (0,0.3);
\draw[thick] (-1,0.1) to[out=0,in=180] (0,0.1);
\draw[thick] (-1,-0.1) to[out=0,in=180] (0,-0.1);
\draw[thick] (-1,-0.5) to[out=0,in=180] (0,-0.3);
\draw[thick] (-1,-0.7) to[out=0,in=180] (0,-0.5);
\draw[thick] (-1,0.3) to[out=0,in=180] (-0.4,-0.3) -- (-1,-0.3);
\draw[densely dotted,fill,opacity=0.5] (-0.75,0.1) circle (2.5pt) (-0.65,-0.1) circle (2.5pt);
\end{scope}
\begin{scope}[xshift=1cm]
\draw[thick] (-1,0.5) to[out=0,in=180] (0,0.3);
\draw[thick] (-1,0.1) to[out=0,in=180] (0,0.1);
\draw[thick] (-1,-0.1) to[out=0,in=180] (0,-0.1);
\draw[thick] (-1,-0.5) to[out=0,in=180] (0,-0.3);
\draw[thick] (-1,0.3) to[out=0,in=180] (-0.4,-0.3) -- (-1,-0.3);
\draw[densely dotted,fill,opacity=0.5] (-0.75,0.1) circle (2.5pt) (-0.65,-0.1) circle (2.5pt);
\end{scope}
\end{tikzpicture}\arrow[r,"(\mathrm{0})","(\mathrm{II})"']&
\begin{tikzpicture}[baseline=-.5ex]
\begin{scope}
\draw[thick] (-1,0.7) to[out=0,in=180] (0,0.7) to[out=0,in=180] (1,0.3);
\draw[thick] (-1,0.5) to[out=0,in=180] (-0.6,0.5) to[out=0,in=180] (0.2,-0.5) -- (-0.5,-0.5);
\draw[thick] (-1,0.3) to[out=0,in=180] (-0.4,-0.1) -- (0.2,-0.1) to[out=0,in=180] (0.4,-0.3) -- (-0.5,-0.3) to[out=180,in=0] (-1,-0.3);
\draw[thick] (-1,0.1) to[out=0,in=180] (-0.5,0.3) -- (0,0.3) to[out=0,in=180] (1,0.1);
\draw[thick] (-1,-0.1) to[out=0,in=180] (-0.5,0.1) -- (0,0.1) to[out=0,in=180] (1,-0.1);
\draw[thick] (-1,-0.5) to[out=0,in=180] (-0.5,-0.5);
\draw[thick] (-1,-0.7) to[out=0,in=180] (-0.5,-0.7) to[out=0,in=180] (0,-0.7) to[out=0,in=180] (1,-0.3);
\draw[densely dotted,fill,opacity=0.5] (-0.78,0.18) circle (2.5pt) (-0.66,0.04) circle (2.5pt);
\draw[densely dotted,fill,opacity=0.5] (-0.32,0.3) circle (2.5pt) (-0.23,0.1) circle (2.5pt) (-0.17,-0.1) circle (2.5pt) (-0.1,-0.3) circle (2.5pt);
\end{scope}
\end{tikzpicture}\arrow[r,"(\mathrm{III})"]&
\begin{tikzpicture}[baseline=-.5ex]
\begin{scope}
\draw[thick] (-1,0.7) to[out=0,in=180] (0,0.5) to[out=0,in=180] (1,0.3);
\draw[thick] (-1,0.5) to[out=0,in=180] (-0.4,-0.5) -- (-1,-0.5);
\draw[thick] (-1,0.3) to[out=0,in=180] (0,0.3) to[out=0,in=180] (0.6,-0.3) -- (-1,-0.3);
\draw[thick] (-1,0.1) to[out=0,in=180] (1,0.1);
\draw[thick] (-1,-0.1) to[out=0,in=180] (1,-0.1);
\draw[thick] (-1,-0.7) to[out=0,in=180] (0,-0.5) to[out=0,in=180] (1,-0.3);
\draw[densely dotted,fill,opacity=0.5] (0.25,0.1) circle (2.5pt) (0.35,-0.1) circle (2.5pt);
\draw[densely dotted,fill,opacity=0.5] (-0.77,0.3) circle (2.5pt) (-0.72,0.1) circle (2.5pt) (-0.68,-0.1) circle (2.5pt) (-0.63,-0.3) circle (2.5pt);
\end{scope}
\end{tikzpicture}\\
\begin{tikzpicture}[baseline=-.5ex]
\begin{scope}
\draw[thick] (-1,0.7) to[out=0,in=180] (0,0.5);
\draw[thick] (-1,0.5) to[out=0,in=180] (0,0.3);
\draw[thick] (-1,0.1) to[out=0,in=180] (0,0.1);
\draw[thick] (-1,-0.1) to[out=0,in=180] (0,-0.1);
\draw[thick] (-1,-0.5) to[out=0,in=180] (0,-0.3);
\draw[thick] (-1,-0.7) to[out=0,in=180] (0,-0.5);
\draw[thick] (-1,0.3) to[out=0,in=180] (-0.4,-0.3) -- (-1,-0.3);
\draw[densely dotted,fill,opacity=0.5] (-0.75,0.1) circle (2.5pt) (-0.65,-0.1) circle (2.5pt);
\end{scope}
\begin{scope}[xshift=1cm]
\draw[thick] (-1,0.3) to[out=0,in=180] (0,0.3);
\draw[thick] (-1,0.1) to[out=0,in=180] (0,0.1);
\draw[thick] (-1,-0.3) to[out=0,in=180] (0,-0.1);
\draw[thick] (-1,-0.5) to[out=0,in=180] (0,-0.3);
\draw[thick] (-1,0.5) to[out=0,in=180] (-0.4,-0.1) -- (-1,-0.1);
\draw[densely dotted,fill,opacity=0.5] (-0.75,0.3) circle (2.5pt) (-0.65,0.1) circle (2.5pt);
\end{scope}
\end{tikzpicture}\arrow[r,"(\mathrm{0})","(\mathrm{S})"']&
\begin{tikzpicture}[baseline=-.5ex]
\begin{scope}
\draw[thick] (-1,0.7) to[out=0,in=180] (-0.8,0.7) to[out=0,in=180] (0,-0.1) -- (-0.5,-0.1) to[out=180,in=0] (-1,-0.1);
\draw[thick] (-1,0.5) to[out=0,in=180] (0,0.5) to[out=0,in=180] (1,0.3);
\draw[thick] (-1,0.3) to[out=0,in=180] (-0.4,0.1) -- (0,0.1) to[out=0,in=180] (0.4,-0.3) -- (-0.5,-0.3) to[out=180,in=0] (-1,-0.3);
\draw[thick] (-1,0.1) to[out=0,in=180] (-0.5,0.3) -- (0,0.3) to[out=0,in=180] (1,0.1);
\draw[thick] (-1,-0.5) -- (0,-0.5) to[out=0,in=180] (1,-0.1);
\draw[thick] (-1,-0.7) to[out=0,in=180] (-0.5,-0.7) to[out=0,in=180] (0,-0.7) to[out=0,in=180] (1,-0.3);
\draw[densely dotted,fill,opacity=0.5] (-0.72,0.22) circle (2.5pt);
\draw[densely dotted,fill,opacity=0.5] (-0.5,0.5) circle (2.5pt) (-0.4,0.3) circle (2.5pt) (-0.3,0.1) circle (2.5pt);
\end{scope}
\end{tikzpicture}\arrow[r,"(\mathrm{III})"]&
\begin{tikzpicture}[baseline=-.5ex]
\begin{scope}
\draw[thick] (-1,0.7) to[out=0,in=180] (-0.4,-0.1) -- (-1,-0.1);
\draw[thick] (-1,0.5) to[out=0,in=180] (0,0.5);
\draw[thick] (-1,0.3) to[out=0,in=180] (0,0.3);
\draw[thick] (-1,0.1) to[out=0,in=180] (0,0.1);
\draw[thick] (-1,-0.3) to[out=0,in=180] (0,-0.1);
\draw[thick] (-1,-0.5) to[out=0,in=180] (0,-0.3);
\draw[thick] (-1,-0.7) to[out=0,in=180] (0,-0.5);
\draw[densely dotted,fill,opacity=0.5] (-0.75,0.5) circle (2.5pt) (-0.7,0.3) circle (2.5pt) (-0.65,0.1) circle (2.5pt);
\end{scope}
\begin{scope}[xshift=1cm]
\draw[thick] (-1,0.5) to[out=0,in=180] (0,0.3);
\draw[thick] (-1,0.1) to[out=0,in=180] (0,0.1);
\draw[thick] (-1,-0.3) to[out=0,in=180] (0,-0.1);
\draw[thick] (-1,-0.5) to[out=0,in=180] (0,-0.3);
\draw[thick] (-1,0.3) to[out=0,in=180] (-0.6,-0.1) -- (-1,-0.1);
\draw[densely dotted,fill,opacity=0.5] (-0.8,0.1) circle (2.5pt);
\end{scope}
\end{tikzpicture}
\end{tikzcd}
\]

%

Symmetrically, by using {\em marked left cusps} as depicted in Figure~\ref{figure:marked left cusp}, we use the matchings in $R$ to construct the Legendrian $(\sL_\sfv^\lcusp,\sB_\sfv^\lcusp)$ of type $(b,r)$.

\begin{figure}[ht]
\subfigure[Marked right cusp\label{figure:marked right cusp}]{\makebox[0.4\textwidth]{\includegraphics{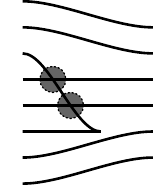}}}
\subfigure[Marked left cusp\label{figure:marked left cusp}]{\makebox[0.4\textwidth]{\includegraphics{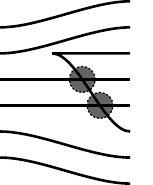}}}
\caption{Marked cusps}
\label{figure:marked cusps}
\end{figure}

Now it remains to construct $(\sL_\sfv^\times,\sB_\sfv^\times)$ of type $(b,b)$ out of the perfect matching $\phi|_B$ on $B$.
Since $\phi|_B$ is a perfect matching between the left edges and the right edges, there is a positive braid $\beta$ on $b$ strands and with a minimal number of crossings (a so called {\em permutation braid}) which induces $\phi|_B$. Note that there is a one-to-one correspondence between permutations and permutation braids.

Recall that any positive $b$-braid $\beta$ can be realized as a bordered Legendrian $\sL_\beta$ of type $(b,b)$ whose arcs have no cusps \cite{Tamas2006}.
In particular, any permutation braid $\beta$ can be regarded as a sub-braid of the {\em half-twist} 
\[
\Delta_b\coloneqq\vcenter{\hbox{\tiny
\begingroup%
  \makeatletter%
  \providecommand\color[2][]{%
    \errmessage{(Inkscape) Color is used for the text in Inkscape, but the package 'color.sty' is not loaded}%
    \renewcommand\color[2][]{}%
  }%
  \providecommand\transparent[1]{%
    \errmessage{(Inkscape) Transparency is used (non-zero) for the text in Inkscape, but the package 'transparent.sty' is not loaded}%
    \renewcommand\transparent[1]{}%
  }%
  \providecommand\rotatebox[2]{#2}%
  \ifx\svgwidth\undefined%
    \setlength{\unitlength}{45.00000073bp}%
    \ifx\svgscale\undefined%
      \relax%
    \else%
      \setlength{\unitlength}{\unitlength * \real{\svgscale}}%
    \fi%
  \else%
    \setlength{\unitlength}{\svgwidth}%
  \fi%
  \global\let\svgwidth\undefined%
  \global\let\svgscale\undefined%
  \makeatother%
  \begin{picture}(1,0.76666407)%
    \put(0,0){\includegraphics[width=\unitlength,page=1]{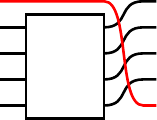}}%
    \put(0.24999999,0.25833074){\color[rgb]{0,0,0}\makebox(0,0)[lb]{\smash{$\Delta_{b-1}$}}}%
  \end{picture}%
\endgroup%
}}~.
\]
in the sense of the following: there is a positive braid $\beta^c$ such that $\beta\beta^c$ is equivalent to $\Delta_b$ as braids. We call $\beta^c$ the {\em right complement} of $\beta$.
Let $\bar\beta^c$ be the \emph{mirror} of $\beta^c$ which is a positive braid obtained by reversing a word representing $\beta^c$.

\begin{remark}
For the notions of complements and mirror, one can refer the paper by El-Rifai and Morton \cite{EM1994}.
\end{remark}

Let $\sL_{\beta^c}$, $\sL_{\bar {\beta^c}}$ be Legendrian permutation braids realizing $\beta^c$ and $\bar{\beta^c}$, respectively, 
and let all crossings in $\sL_{\beta^c}$ and $\sL_{\bar{\beta^c}}$ be marked as in Definition~\ref{def:marking}.
Then we define
\[
\sL_\sfv^\times\coloneqq \sL_{\beta}\cdot\sL_{\beta^c}\cdot\sL_{\bar{\beta^c}}\quad\text{and}\quad
\sB_\sfv^\times\coloneqq \sC(\sL_{\beta^c})\amalg\sC(\sL_{\bar{\beta^c}}),
\]
that is, we leave all crossings in the factor $\sL_\beta$ unmarked (regular).

For concreteness, let us fix standard forms of all permutation braids in the formula above 
inductively, as follows:
\begin{align*}
\sL_{\beta}&=\vcenter{\hbox{
\begingroup%
  \makeatletter%
  \providecommand\color[2][]{%
    \errmessage{(Inkscape) Color is used for the text in Inkscape, but the package 'color.sty' is not loaded}%
    \renewcommand\color[2][]{}%
  }%
  \providecommand\transparent[1]{%
    \errmessage{(Inkscape) Transparency is used (non-zero) for the text in Inkscape, but the package 'transparent.sty' is not loaded}%
    \renewcommand\transparent[1]{}%
  }%
  \providecommand\rotatebox[2]{#2}%
  \ifx\svgwidth\undefined%
    \setlength{\unitlength}{37.50000158bp}%
    \ifx\svgscale\undefined%
      \relax%
    \else%
      \setlength{\unitlength}{\unitlength * \real{\svgscale}}%
    \fi%
  \else%
    \setlength{\unitlength}{\svgwidth}%
  \fi%
  \global\let\svgwidth\undefined%
  \global\let\svgscale\undefined%
  \makeatother%
  \begin{picture}(1,1.01999941)%
    \put(0,0){\includegraphics[width=\unitlength,page=1]{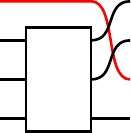}}%
    \put(0.30000001,0.30999919){\color[rgb]{0,0,0}\makebox(0,0)[lb]{\smash{$\sL'_{\beta}$}}}%
  \end{picture}%
\endgroup%
}}~;&
\sL_{\beta^c}&=\vcenter{\hbox{
\begingroup%
  \makeatletter%
  \providecommand\color[2][]{%
    \errmessage{(Inkscape) Color is used for the text in Inkscape, but the package 'color.sty' is not loaded}%
    \renewcommand\color[2][]{}%
  }%
  \providecommand\transparent[1]{%
    \errmessage{(Inkscape) Transparency is used (non-zero) for the text in Inkscape, but the package 'transparent.sty' is not loaded}%
    \renewcommand\transparent[1]{}%
  }%
  \providecommand\rotatebox[2]{#2}%
  \ifx\svgwidth\undefined%
    \setlength{\unitlength}{37.50000262bp}%
    \ifx\svgscale\undefined%
      \relax%
    \else%
      \setlength{\unitlength}{\unitlength * \real{\svgscale}}%
    \fi%
  \else%
    \setlength{\unitlength}{\svgwidth}%
  \fi%
  \global\let\svgwidth\undefined%
  \global\let\svgscale\undefined%
  \makeatother%
  \begin{picture}(1,1.01999991)%
    \put(0,0){\includegraphics[width=\unitlength,page=1]{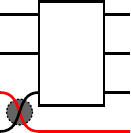}}%
    \put(0.39999997,0.40999844){\color[rgb]{0,0,0}\makebox(0,0)[lb]{\smash{$\sL'_{\beta^c}$}}}%
  \end{picture}%
\endgroup%
}}~;&
\sL_{\bar{\beta^c}}&=\vcenter{\hbox{
\begingroup%
  \makeatletter%
  \providecommand\color[2][]{%
    \errmessage{(Inkscape) Color is used for the text in Inkscape, but the package 'color.sty' is not loaded}%
    \renewcommand\color[2][]{}%
  }%
  \providecommand\transparent[1]{%
    \errmessage{(Inkscape) Transparency is used (non-zero) for the text in Inkscape, but the package 'transparent.sty' is not loaded}%
    \renewcommand\transparent[1]{}%
  }%
  \providecommand\rotatebox[2]{#2}%
  \ifx\svgwidth\undefined%
    \setlength{\unitlength}{37.50000262bp}%
    \ifx\svgscale\undefined%
      \relax%
    \else%
      \setlength{\unitlength}{\unitlength * \real{\svgscale}}%
    \fi%
  \else%
    \setlength{\unitlength}{\svgwidth}%
  \fi%
  \global\let\svgwidth\undefined%
  \global\let\svgscale\undefined%
  \makeatother%
  \begin{picture}(1,1.01999991)%
    \put(0,0){\includegraphics[width=\unitlength,page=1]{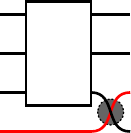}}%
    \put(0.30000005,0.40999844){\color[rgb]{0,0,0}\makebox(0,0)[lb]{\smash{$\sL'_{\bar{\beta^c}}$}}}%
  \end{picture}%
\endgroup%
}}~.&
\end{align*}

In conclusion, for a given perfect matching $\phi$ at $\sfv$, the resulting resolution $(\sL_\sfv^\phi,\sB_\sfv^\phi)$ of $(\sL_\sfv, \emptyset)$ is defined by 
\begin{align*}
(\sL_\sfv^\phi,\sB_\sfv^\phi)&\coloneqq(\sL_\sfv^\rcusp,\sB_\sfv^\rcusp)\cdot (\sL_\sfv^\times,\sB_\sfv^\times)\cdot (\sL_\sfv^\lcusp,\sB_\sfv^\rcusp)\\
&=(\sL_\sfv^\rcusp\cdot\sL_\sfv^\times\cdot\sL_\sfv^\lcusp, \sB_\sfv^\rcusp\amalg\sB_\sfv^\times\amalg \sB_\sfv^\lcusp).
\end{align*}

\begin{remark}
If $\sfv$ is of type $(\ell,0)$ or $(0,r)$, then $\beta$ is a $0$-braid and hence empty. Therefore all crossings in all resolutions are marked.
\end{remark}

\begin{example}
Let us consider a vertex $\sfv$ in a Legendrian graph $\sL$ of type $(5,3)$ as follows:
\[
\vcenter{\hbox{
\begingroup%
  \makeatletter%
  \providecommand\color[2][]{%
    \errmessage{(Inkscape) Color is used for the text in Inkscape, but the package 'color.sty' is not loaded}%
    \renewcommand\color[2][]{}%
  }%
  \providecommand\transparent[1]{%
    \errmessage{(Inkscape) Transparency is used (non-zero) for the text in Inkscape, but the package 'transparent.sty' is not loaded}%
    \renewcommand\transparent[1]{}%
  }%
  \providecommand\rotatebox[2]{#2}%
  \ifx\svgwidth\undefined%
    \setlength{\unitlength}{85.99241481bp}%
    \ifx\svgscale\undefined%
      \relax%
    \else%
      \setlength{\unitlength}{\unitlength * \real{\svgscale}}%
    \fi%
  \else%
    \setlength{\unitlength}{\svgwidth}%
  \fi%
  \global\let\svgwidth\undefined%
  \global\let\svgscale\undefined%
  \makeatother%
  \begin{picture}(1,0.55432148)%
    \put(0,0){\includegraphics[width=\unitlength,page=1]{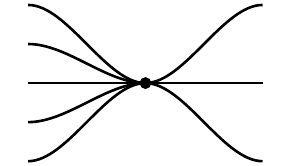}}%
    \put(0.46079023,0.31968035){\color[rgb]{0,0,0}\makebox(0,0)[lb]{\smash{$\sfv$}}}%
    \put(0.90484448,0.53555328){\color[rgb]{0,0,0}\makebox(0,0)[lb]{\smash{\tiny$1$}}}%
    \put(0.90924959,0.27234705){\color[rgb]{0,0,0}\makebox(0,0)[lb]{\smash{\tiny$2$}}}%
    \put(0.91145218,0.00803928){\color[rgb]{0,0,0}\makebox(0,0)[lb]{\smash{\tiny$3$}}}%
    \put(0.00110129,0.53775536){\color[rgb]{0,0,0}\makebox(0,0)[lb]{\smash{\tiny$4$}}}%
    \put(-0,0.39899371){\color[rgb]{0,0,0}\makebox(0,0)[lb]{\smash{\tiny$5$}}}%
    \put(0.00127032,0.00453545){\color[rgb]{0,0,0}\makebox(0,0)[lb]{\smash{\tiny$8$}}}%
    \put(0.00127032,0.26502353){\color[rgb]{0,0,0}\makebox(0,0)[lb]{\smash{\tiny$6$}}}%
    \put(0.00127032,0.13477949){\color[rgb]{0,0,0}\makebox(0,0)[lb]{\smash{\tiny$7$}}}%
  \end{picture}%
\endgroup%
}}
\]
Suppose that a perfect matching $\phi \in \cP_{[8]}$ is given by
\[
\{ \{1,6\}, \{2,4\}, \{3,8\}, \{5,7\} \}.
\]
Then we have $L=\{5,7\}$, $B=\{1,2,3,4,6,8\}$, and $R=\emptyset$.
It is straightforward to check that $\sL^R$ is the trivial braid of type $(3,3)$ and that $(\sL^L,\sB^L)$ becomes the following bordered Legendrian of type $(5,3)$:
\[
\vcenter{\hbox{
\begingroup%
  \makeatletter%
  \providecommand\color[2][]{%
    \errmessage{(Inkscape) Color is used for the text in Inkscape, but the package 'color.sty' is not loaded}%
    \renewcommand\color[2][]{}%
  }%
  \providecommand\transparent[1]{%
    \errmessage{(Inkscape) Transparency is used (non-zero) for the text in Inkscape, but the package 'transparent.sty' is not loaded}%
    \renewcommand\transparent[1]{}%
  }%
  \providecommand\rotatebox[2]{#2}%
  \ifx\svgwidth\undefined%
    \setlength{\unitlength}{53.08730336bp}%
    \ifx\svgscale\undefined%
      \relax%
    \else%
      \setlength{\unitlength}{\unitlength * \real{\svgscale}}%
    \fi%
  \else%
    \setlength{\unitlength}{\svgwidth}%
  \fi%
  \global\let\svgwidth\undefined%
  \global\let\svgscale\undefined%
  \makeatother%
  \begin{picture}(1,0.63694092)%
    \put(0,0){\includegraphics[width=\unitlength,page=1]{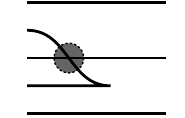}}%
    \put(-0.00312722,0.60831767){\color[rgb]{0,0,0}\makebox(0,0)[lb]{\smash{\tiny$4$}}}%
    \put(-0.00312722,0.4576225){\color[rgb]{0,0,0}\makebox(0,0)[lb]{\smash{\tiny$5$}}}%
    \put(-0.00312722,0.30692734){\color[rgb]{0,0,0}\makebox(0,0)[lb]{\smash{\tiny$6$}}}%
    \put(-0.00312722,0.15623218){\color[rgb]{0,0,0}\makebox(0,0)[lb]{\smash{\tiny$7$}}}%
    \put(-0.00312722,0.00553702){\color[rgb]{0,0,0}\makebox(0,0)[lb]{\smash{\tiny$8$}}}%
  \end{picture}%
\endgroup%
}}
\]
Since $\phi|_B=\{\{1,6\}, \{2,4\}, \{3,8\}\}$, the following claims are easy to check:
\begin{align*}
\sL_{\beta}&=\vcenter{\hbox{
\begingroup%
  \makeatletter%
  \providecommand\color[2][]{%
    \errmessage{(Inkscape) Color is used for the text in Inkscape, but the package 'color.sty' is not loaded}%
    \renewcommand\color[2][]{}%
  }%
  \providecommand\transparent[1]{%
    \errmessage{(Inkscape) Transparency is used (non-zero) for the text in Inkscape, but the package 'transparent.sty' is not loaded}%
    \renewcommand\transparent[1]{}%
  }%
  \providecommand\rotatebox[2]{#2}%
  \ifx\svgwidth\undefined%
    \setlength{\unitlength}{33.88730384bp}%
    \ifx\svgscale\undefined%
      \relax%
    \else%
      \setlength{\unitlength}{\unitlength * \real{\svgscale}}%
    \fi%
  \else%
    \setlength{\unitlength}{\svgwidth}%
  \fi%
  \global\let\svgwidth\undefined%
  \global\let\svgscale\undefined%
  \makeatother%
  \begin{picture}(1,0.99782136)%
    \put(0,0){\includegraphics[width=\unitlength,page=1]{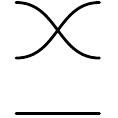}}%
    \put(0.89219207,0.0086742){\color[rgb]{0,0,0}\makebox(0,0)[lb]{\smash{\tiny$3$}}}%
    \put(0.89219207,0.48082742){\color[rgb]{0,0,0}\makebox(0,0)[lb]{\smash{\tiny$2$}}}%
    \put(0.89219207,0.95298064){\color[rgb]{0,0,0}\makebox(0,0)[lb]{\smash{\tiny$1$}}}%
    \put(-0.00489905,0.95298064){\color[rgb]{0,0,0}\makebox(0,0)[lb]{\smash{\tiny$4$}}}%
    \put(-0.00489905,0.48082742){\color[rgb]{0,0,0}\makebox(0,0)[lb]{\smash{\tiny$6$}}}%
    \put(-0.00489905,0.0086742){\color[rgb]{0,0,0}\makebox(0,0)[lb]{\smash{\tiny$8$}}}%
  \end{picture}%
\endgroup%
}}~;&
(\sL_{\beta^c},\sC(\sL_{\beta^c}))&=\vcenter{\hbox{
\begingroup%
  \makeatletter%
  \providecommand\color[2][]{%
    \errmessage{(Inkscape) Color is used for the text in Inkscape, but the package 'color.sty' is not loaded}%
    \renewcommand\color[2][]{}%
  }%
  \providecommand\transparent[1]{%
    \errmessage{(Inkscape) Transparency is used (non-zero) for the text in Inkscape, but the package 'transparent.sty' is not loaded}%
    \renewcommand\transparent[1]{}%
  }%
  \providecommand\rotatebox[2]{#2}%
  \ifx\svgwidth\undefined%
    \setlength{\unitlength}{34.11590898bp}%
    \ifx\svgscale\undefined%
      \relax%
    \else%
      \setlength{\unitlength}{\unitlength * \real{\svgscale}}%
    \fi%
  \else%
    \setlength{\unitlength}{\svgwidth}%
  \fi%
  \global\let\svgwidth\undefined%
  \global\let\svgscale\undefined%
  \makeatother%
  \begin{picture}(1,0.99304428)%
    \put(0,0){\includegraphics[width=\unitlength,page=1]{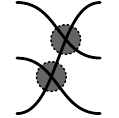}}%
    \put(0.89291447,0.01052523){\color[rgb]{0,0,0}\makebox(0,0)[lb]{\smash{\tiny$3$}}}%
    \put(0.89291447,0.47951463){\color[rgb]{0,0,0}\makebox(0,0)[lb]{\smash{\tiny$2$}}}%
    \put(0.89291447,0.94850403){\color[rgb]{0,0,0}\makebox(0,0)[lb]{\smash{\tiny$1$}}}%
    \put(-0.00486622,0.00861608){\color[rgb]{0,0,0}\makebox(0,0)[lb]{\smash{\tiny$3$}}}%
    \put(-0.00486622,0.47760547){\color[rgb]{0,0,0}\makebox(0,0)[lb]{\smash{\tiny$2$}}}%
    \put(-0.00486622,0.94659475){\color[rgb]{0,0,0}\makebox(0,0)[lb]{\smash{\tiny$1$}}}%
  \end{picture}%
\endgroup%
}}~;&
(\sL_{\bar{\beta^c}},\sC(\sL_{\bar{\beta^c}}))&=\vcenter{\hbox{
\begingroup%
  \makeatletter%
  \providecommand\color[2][]{%
    \errmessage{(Inkscape) Color is used for the text in Inkscape, but the package 'color.sty' is not loaded}%
    \renewcommand\color[2][]{}%
  }%
  \providecommand\transparent[1]{%
    \errmessage{(Inkscape) Transparency is used (non-zero) for the text in Inkscape, but the package 'transparent.sty' is not loaded}%
    \renewcommand\transparent[1]{}%
  }%
  \providecommand\rotatebox[2]{#2}%
  \ifx\svgwidth\undefined%
    \setlength{\unitlength}{34.29585689bp}%
    \ifx\svgscale\undefined%
      \relax%
    \else%
      \setlength{\unitlength}{\unitlength * \real{\svgscale}}%
    \fi%
  \else%
    \setlength{\unitlength}{\svgwidth}%
  \fi%
  \global\let\svgwidth\undefined%
  \global\let\svgscale\undefined%
  \makeatother%
  \begin{picture}(1,0.98982399)%
    \put(0,0){\includegraphics[width=\unitlength,page=1]{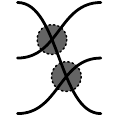}}%
    \put(0.89347634,0.01246015){\color[rgb]{0,0,0}\makebox(0,0)[lb]{\smash{\tiny$3$}}}%
    \put(0.89347634,0.47898879){\color[rgb]{0,0,0}\makebox(0,0)[lb]{\smash{\tiny$2$}}}%
    \put(0.89347634,0.94551744){\color[rgb]{0,0,0}\makebox(0,0)[lb]{\smash{\tiny$1$}}}%
    \put(-0.00484069,0.00857087){\color[rgb]{0,0,0}\makebox(0,0)[lb]{\smash{\tiny$3$}}}%
    \put(-0.00484069,0.47509951){\color[rgb]{0,0,0}\makebox(0,0)[lb]{\smash{\tiny$2$}}}%
    \put(-0.00484069,0.94162828){\color[rgb]{0,0,0}\makebox(0,0)[lb]{\smash{\tiny$1$}}}%
  \end{picture}%
\endgroup%
}}~.&
\end{align*}
Thus the resulting resolution $\sL^L\cdot \sL^B \cdot\sL^R=\sL^L\cdot( \sL_{\beta} \sL_{\beta^c} \sL_{\bar{\beta^c}})\cdot \sL^R$ with markings becomes 
\begin{align*}
(\sL_\sfv,\emptyset)&\stackrel{\phi}\longmapsto (\sL_\sfv^\phi, \sB_\sfv^\phi)=\vcenter{\hbox{\tiny
\begingroup%
  \makeatletter%
  \providecommand\color[2][]{%
    \errmessage{(Inkscape) Color is used for the text in Inkscape, but the package 'color.sty' is not loaded}%
    \renewcommand\color[2][]{}%
  }%
  \providecommand\transparent[1]{%
    \errmessage{(Inkscape) Transparency is used (non-zero) for the text in Inkscape, but the package 'transparent.sty' is not loaded}%
    \renewcommand\transparent[1]{}%
  }%
  \providecommand\rotatebox[2]{#2}%
  \ifx\svgwidth\undefined%
    \setlength{\unitlength}{154.76934428bp}%
    \ifx\svgscale\undefined%
      \relax%
    \else%
      \setlength{\unitlength}{\unitlength * \real{\svgscale}}%
    \fi%
  \else%
    \setlength{\unitlength}{\svgwidth}%
  \fi%
  \global\let\svgwidth\undefined%
  \global\let\svgscale\undefined%
  \makeatother%
  \begin{picture}(1,0.20482179)%
    \put(0,0){\includegraphics[width=\unitlength,page=1]{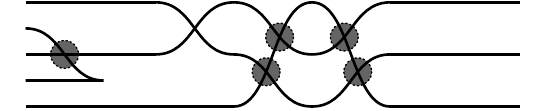}}%
    \put(0.97787037,0.00178054){\color[rgb]{0,0,0}\makebox(0,0)[lb]{\smash{$3$}}}%
    \put(0.97787037,0.09869896){\color[rgb]{0,0,0}\makebox(0,0)[lb]{\smash{$2$}}}%
    \put(0.97787037,0.19561738){\color[rgb]{0,0,0}\makebox(0,0)[lb]{\smash{$1$}}}%
    \put(-0.00100562,0.19561738){\color[rgb]{0,0,0}\makebox(0,0)[lb]{\smash{$4$}}}%
    \put(-0.00100562,0.14715817){\color[rgb]{0,0,0}\makebox(0,0)[lb]{\smash{$5$}}}%
    \put(-0.00100562,0.09869896){\color[rgb]{0,0,0}\makebox(0,0)[lb]{\smash{$6$}}}%
    \put(-0.00100562,0.05023975){\color[rgb]{0,0,0}\makebox(0,0)[lb]{\smash{$7$}}}%
    \put(-0.00100562,0.00178054){\color[rgb]{0,0,0}\makebox(0,0)[lb]{\smash{$8$}}}%
  \end{picture}%
\endgroup%
}}~.
\end{align*}
\end{example}

In general, for a marked Legendrian graph $(\sL,\sB)$, let $\phi$ be a perfect matching on the set of half-edges of a vertex $\sfv$, which we will call a perfect matching of $\sfv$. Then one can define the resolution of $(\sL,\sB)$ with respect to $\phi$ by the replacement of a small neighborhood of $\sfv$ with the resolution diagram.
\begin{align*}
(\sL,\sB)&\stackrel{\phi}{\longrightarrow}(\sL^{\phi}, \sB^{\phi}),&
\sB^{\phi}&=\sB\amalg\sB_\sfv^{\phi}.
\end{align*}

It is important to note that the result $\sL^{\phi}$ of the resolution is not necessarily equipped with a Maslov potential.
In the above example, unless the Maslov potentials of the pairs of arcs comprising $L$ --- 1st and 6th, 2nd and 4th, 3rd and 8th --- coincide, and the Maslov potentials of the 5th and 7th arcs have difference 1, 
a Maslov potential for $\sL$ will not extend to one for $\sL^\phi$.

\begin{definition}
Let $(\sL,\mu)$ be a Legendrian graph with Maslov potential. For $\rho\in\Z$, a perfect matching $\phi$ on a vertex $\sfv$ of type $(\ell,r)$ is {\em $\rho$-graded} with respect to $\mu$ if
\begin{enumerate}
\item for $\{i,\phi(i)\}\in\phi$ with $\phi(i)<i\le r$ or $i>\phi(i)>r$, the difference of Maslov potentials for the $i$-th and $\phi(i)$-th arcs is $1$ modulo $\rho$:
\[
\mu(\sfe_i)-\mu(\sfe_{\phi(i)})=1\in \fR/\rho\fR.
\]
\item for $\{i,\phi(i)\}\in\phi$ with $i\le r<\phi(i)$ or $\phi(i)\le r<i$, the difference of Maslov potentials for $i$-th and $\phi(i)$-th arcs is divisible by $\rho$:
\[
\mu(\sfe_i)-\mu(\sfe_{\phi(i)})=0\in \fR/\rho\fR.
\]
\end{enumerate}

We say that a resolution $\phi$ is a {\em$\rho$-graded resolution} if $\phi$ is {\em$\rho$-graded}, and denote the set of $\rho$-graded matchings at $\sfv$ by 
\[
\cP^\rho_\sfv\coloneqq\{
\phi\in\cP_\sfv\mid \phi\text{ is $\rho$-graded}\}.
\]
\end{definition}

\begin{remark}
If $\rho=1$, then the Maslov potential becomes trivial and all possible perfect matchings are $1$-graded.
\end{remark}

\begin{lemma}
Let $(\sL,\mu)$ be a Legendrian graph with Maslov potential and $\sfv$ be a vertex. Any $\rho$-graded resolution $\phi$ on $\sfv$ admits an induced Maslov potential $\mu^{\phi}$.
\end{lemma}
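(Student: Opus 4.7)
The plan is to construct $\mu^\phi$ component-by-component on $\sL^\phi\setminus(\sV_{\sL^\phi}\cup\lcusp_{\sL^\phi}\cup\rcusp_{\sL^\phi})$, taking values in $\fR/\rho\fR$, and then to verify the Maslov cusp condition $\mu^\phi(\alpha)=\mu^\phi(\alpha')+1_\fR$ at every cusp appearing in the resolution. Outside a small neighborhood of $\sfv$ I would simply set $\mu^\phi\coloneqq\mu\bmod\rho$; this already fixes the values on the $\ell+r$ half-edges $\sfe_1,\dots,\sfe_{\ell+r}$ incident to $\sfv$.

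I would then treat each factor of $\sL_\sfv^\phi=\sL_\sfv^\rcusp\cdot\sL_\sfv^\times\cdot\sL_\sfv^\lcusp$ separately. The middle factor $\sL_\sfv^\times=\sL_\beta\cdot\sL_{\beta^c}\cdot\sL_{\bar{\beta^c}}$ has no cusps, so the potential is forced to be constant on each of its $b$ strands. Such a strand joins some $\sfe_i$ with $i>r$ and $i\in B$ to $\sfe_{\phi(i)}$ with $\phi(i)\le r$; I assign the value $\mu(\sfe_i)$. The $B$-part of the $\rho$-grading condition, $\mu(\sfe_i)\equiv\mu(\sfe_{\phi(i)})\pmod\rho$, ensures that this is compatible with the value dictated at the right endpoint.

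For the two outer factors I would process one marked right (resp.\ left) cusp at a time, corresponding to a pair $\{i,\phi(i)\}\in L$ (resp.\ $R$). Since crossings do not alter the potential, the two arcs meeting at the unique cusp of such a gadget inherit the values $\mu(\sfe_i)$ and $\mu(\sfe_{\phi(i)})$ from their left (resp.\ right) endpoints. The cusp condition requires that these two values differ by $1_\fR$ in $\fR/\rho\fR$, which is precisely the $L$-part (respectively $R$-part) of the $\rho$-grading hypothesis.

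The one step that requires genuine attention is the gluing at the interfaces $\iota_R^*(\sL_\sfv^\rcusp)=\iota_L^*(\sL_\sfv^\times)$ and $\iota_R^*(\sL_\sfv^\times)=\iota_L^*(\sL_\sfv^\lcusp)$: the values assigned on the two sides of each interface must agree. After labelling each strand of $\sL_\sfv^\times$ by the index $i\in B$ at its left end and each strand of $\sL_\sfv^\rcusp,\sL_\sfv^\lcusp$ by the index of the half-edge of $\sfv$ it originates from, this compatibility is built into the way the three factors are concatenated. I expect this bookkeeping, rather than any deeper difficulty, to be the main technical annoyance of the argument.
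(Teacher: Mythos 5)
Your proposal is correct and follows essentially the same route as the paper's (much terser) proof: the $L$- and $R$-pairs become cusps, where the $\rho$-graded condition supplies the required difference of $1_\fR$, while the $B$-pairs become smooth strands, where it supplies equality; the bookkeeping at the interfaces of the three factors is routine. The paper compresses all of this into two sentences, so your version is simply a more explicit account of the same argument.
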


\begin{proof}
Since $\phi$ is $\rho$-graded, the arcs in each matched pair have Maslov potentials which are either the same or differ by 1 according to whether they form a smooth arc or a cusp after the resolution. This implies that $\mu$ induces a Maslov potential $\mu^{\phi}$ on the resolution $\sL^{\phi}$.
\end{proof}

Therefore, a $\rho$-graded resolution $\phi$ on a vertex $\sfv$ of a marked bordered Legendrian graph $\bL=((\sL,\mu),\sB)$ gives us $\bL^{\phi}$ defined as follows:
\[
\bL=((\sL, \mu), \sB)\stackrel{\phi}{\longrightarrow}
\bL^{\phi}\coloneqq((\sL^{\phi}, \mu^{\phi}), \sB^{\phi}).
\]

Let us denote the set of all collections of $\rho$-graded matchings $\Phi=\{\phi_\sfv\}_{\sfv\in\sV_\sL}$, one for each vertex, by $\cP^\rho_\bL$. In symbols,
\[
\cP^\rho_\bL\coloneqq\{ \Phi=\{\phi_\sfv\}_{\sfv\in\sV_\sL} \mid \forall i\,\phi_{\sfv_i}\in\cP^\rho_{\sfv_i}\}\simeq
\prod_{\sfv\in\sV_\sL} \cP^\rho_\sfv.
\]
Then one can define a simultaneous resolution $\bL^\Phi$ of $\bL$, via $\Phi$, in a canonical way as follows:
\begin{align*}
\bL^{\Phi}&\coloneqq
(\cdots((\bL^{\phi_{\sfv_1}})^{\phi_{\sfv_2}})\cdots)^{\phi_{\sfv_k}},& \text{where }
\sV&=\{\sfv_1,\dots,\sfv_k\}.
\end{align*}

\begin{definition}[Full resolutions]
Let $\bL=((\sL,\mu),\sB)$ be a marked bordered Legendrian graph. We define the set $\tilde \bL$ of all marked $\rho$-resolutions of $\bL$, simultaneously at all vertices, to consist of the following bordered Legendrians without vertices:
\[
\tilde\bL\coloneqq
\coprod_{\Phi\in\cP^\rho_\bL} \bL^\Phi.
\]
\end{definition}

\begin{example}\label{ex:resolution of 4-valent}
There are five types of four-valent vertices, each of which has three ($1$-graded) resolutions as follows:
\begin{align*}
(4,0)&\longmapsto \vcenter{\hbox{\includegraphics{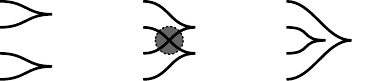}}}\\
(3,1)&\longmapsto \vcenter{\hbox{\includegraphics{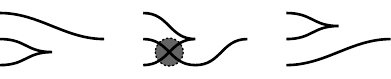}}}\\
(2,2)&\longmapsto \vcenter{\hbox{\includegraphics{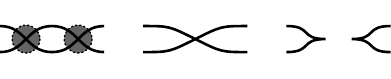}}}\\
(1,3)&\longmapsto \vcenter{\hbox{\includegraphics{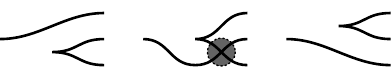}}}\\
(0,4)&\longmapsto \vcenter{\hbox{\includegraphics{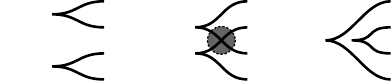}}}
\end{align*}
\end{example}

\begin{example}
For the readers' convenience, we list local resolutions for some vertices of valency six. 
Note that all types $(\ell,r)$ with $\ell+r=6$ have 15 possible ($1$-graded) resolutions which coincide with the number of possible pairings of six edges.
Here is the list of resolutions for the type $(0,6)$:
\begin{align*}
\vcenter{\hbox{\scalebox{1.5}{
\begingroup%
  \makeatletter%
  \providecommand\color[2][]{%
    \errmessage{(Inkscape) Color is used for the text in Inkscape, but the package 'color.sty' is not loaded}%
    \renewcommand\color[2][]{}%
  }%
  \providecommand\transparent[1]{%
    \errmessage{(Inkscape) Transparency is used (non-zero) for the text in Inkscape, but the package 'transparent.sty' is not loaded}%
    \renewcommand\transparent[1]{}%
  }%
  \providecommand\rotatebox[2]{#2}%
  \ifx\svgwidth\undefined%
    \setlength{\unitlength}{128.4676797bp}%
    \ifx\svgscale\undefined%
      \relax%
    \else%
      \setlength{\unitlength}{\unitlength * \real{\svgscale}}%
    \fi%
  \else%
    \setlength{\unitlength}{\svgwidth}%
  \fi%
  \global\let\svgwidth\undefined%
  \global\let\svgscale\undefined%
  \makeatother%
  \begin{picture}(1,1.06352258)%
    \put(0,0){\includegraphics[width=\unitlength,page=1]{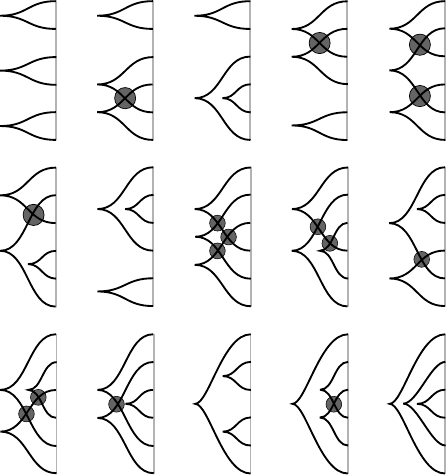}}%
  \end{picture}%
\endgroup%
}}}
\end{align*}
The following is the resolution list for type $(3,3)$:
\begin{align*}
\vcenter{\hbox{
\begingroup%
  \makeatletter%
  \providecommand\color[2][]{%
    \errmessage{(Inkscape) Color is used for the text in Inkscape, but the package 'color.sty' is not loaded}%
    \renewcommand\color[2][]{}%
  }%
  \providecommand\transparent[1]{%
    \errmessage{(Inkscape) Transparency is used (non-zero) for the text in Inkscape, but the package 'transparent.sty' is not loaded}%
    \renewcommand\transparent[1]{}%
  }%
  \providecommand\rotatebox[2]{#2}%
  \ifx\svgwidth\undefined%
    \setlength{\unitlength}{288.80008bp}%
    \ifx\svgscale\undefined%
      \relax%
    \else%
      \setlength{\unitlength}{\unitlength * \real{\svgscale}}%
    \fi%
  \else%
    \setlength{\unitlength}{\svgwidth}%
  \fi%
  \global\let\svgwidth\undefined%
  \global\let\svgscale\undefined%
  \makeatother%
  \begin{picture}(1,0.39063907)%
    \put(0,0){\includegraphics[width=\unitlength,page=1]{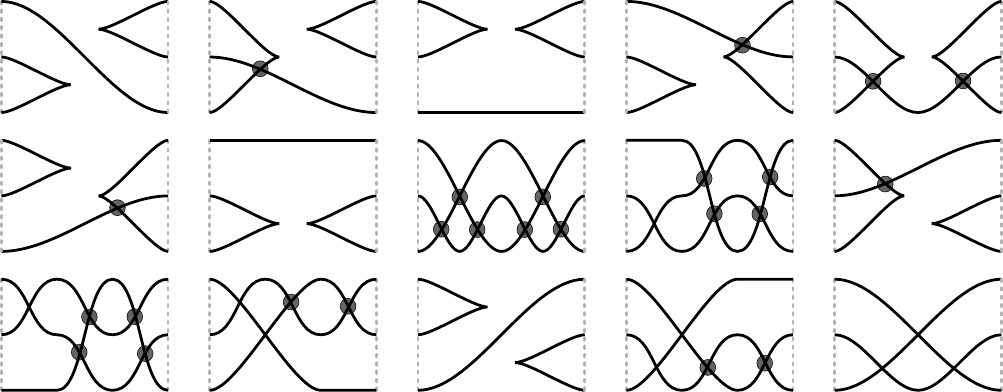}}%
  \end{picture}%
\endgroup%
}}
\end{align*}
We leave it for the reader to check other types, which can be done easily.
\end{example}

\section{Rulings for bordered Legendrian links and graphs}

\subsection{Rulings for marked bordered Legendrian graphs}
In this section we will define $\rho$-graded normal rulings for marked bordered Legendrian graphs. To this end, we first define normal rulings for bordered Legendrian links.

Let $\Lambda$ be a bordered Legendrian link and $\fr\subset\sC$ be a subset of its crossings. We denote the $0$-resolution of $\Lambda$ at every crossing $\sfc$ in $\fr$ by $\Lambda_\fr$:
\[
\begin{tikzpicture}[scale=0.5]
\begin{scope}[xshift=-0.5cm]
\draw (-1,0) node[left] {$\Lambda=$};
\draw[thick] (-1,-1) -- (1,1);
\draw[line width=10pt,white] (-1,1) -- (1,-1);
\draw[thick] (-1,1) -- (1,-1);
\draw (0,0) node[below] {$\sfc$};
\end{scope}
\draw[->] (1,0) -- (2,0);
\begin{scope}[xshift=3.5cm]
\draw[thick] (-1,1) -- (-0.5,0.5) to[out=-45,in=225] (0.5,0.5) -- (1,1);
\draw[thick] (-1,-1) -- (-0.5,-0.5) to[out=45,in=-225] (0.5,-0.5) -- (1,-1);
\draw (1,0) node[right] {$=\Lambda_\sfc$.};
\end{scope}
\end{tikzpicture}
\]

\begin{definition}
Let $\mathbf{\Lambda}=((\Lambda,\mu),\sB)$ be a marked bordered Legendrian link of type $(\ell,r)$, and $(\phi,\psi)$ be a pair of matchings in $\cP^\rho_{[\ell]} \times \cP^\rho_{[r]}$. A {\em $\rho$-graded normal ruling} of $\bL$ with $(\phi,\psi)$ is a subset $\fr$ of $\sC\setminus \sB$ with {\em decomposition $S_\fr$} such that
\begin{enumerate}
\item $|\sfc|\in \rho\fR$ for any $\sfc\in \fr$;
\item $S_\fr$ {\em decomposes} the 0-resolution $\Lambda_\fr$ into {\em eyes}, {\em left half-eyes}, {\em right half-eyes} and {\em parallels}, which are bordered Legendrian links of type $(0,0)$, $(0,2)$, $(2,0)$, and $(2,2)$, respectively, looking as follows:
\begin{align*}
\vcenter{\hbox{\includegraphics{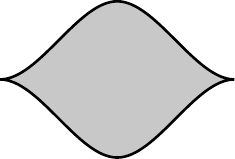}}}\qquad
\vcenter{\hbox{\includegraphics{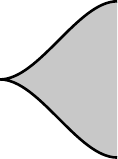}}}\qquad
\vcenter{\hbox{\includegraphics{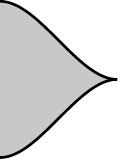}}}\qquad
\vcenter{\hbox{\includegraphics{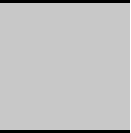}}}
\end{align*}

\item at each $\sfc\in \fr$, a {\em non-interlacing} condition is satisfied.
The following are the only possible decomposition configurations in narrow vertical regions containg some $\sfc\in \fr$:
\begin{align*}
\vcenter{\hbox{\includegraphics{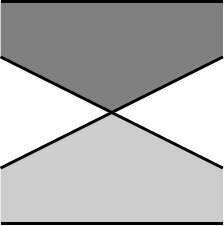}}}\qquad
\vcenter{\hbox{\includegraphics{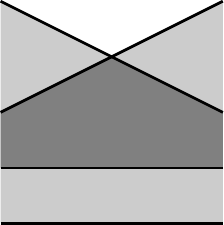}}}\qquad
\vcenter{\hbox{\includegraphics{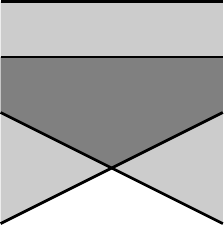}}}
\end{align*}
\item
$\phi=\iota_L^*(S_\fr)$ and $\psi=\iota^*_R(S_\fr)$.
\end{enumerate}
Let us denote the set of such $\rho$-graded normal rulings by $\bR^\rho_{\mathbf{\Lambda}}(\phi,\psi)$, and simply denote its element $(\fr, S_\fr)$ by $S_\fr$.
\end{definition}

Now we consider marked bordered Legendrian graphs and define $\rho$-graded normal rulings as rulings of resolutions of the graph, as follows:

\begin{definition}
Let $\bL$ be a marked bordered Legendrian graph of type $(\ell,r)$, and let $(\phi,\psi)$ be a pair of matchings in $\cP^\rho_{[\ell]} \times \cP^\rho_{[r]}$. 
Then we define the set of $\rho$-graded normal rulings of $\bL$ as follows:
\begin{align*}
\bR^\rho_\bL(\phi,\psi) \coloneqq \coprod_{\mathbf{\Lambda}\in\tilde\bL} \bR^\rho_{\mathbf{\Lambda}}(\phi,\psi),&&
\bR^\rho_\bL\coloneqq \coprod_{(\phi,\psi)\in \cP^\rho_{[\ell]} \times \cP^\rho_{[r]}}\bR^\rho_\bL(\phi,\psi).
\end{align*}
\end{definition}

Now for marked bordered Legendrian graphs $\bL_1$ and $\bL_2$ of type $(\ell,r)$ and $(r,s)$, respectively, we have the following lemma.

\begin{lemma}\label{lemma:fiber product}
Let $\bL=\bL_1\cdot\bL_2$. Then the set of $\rho$-graded normal rulings of $\bL$ is the following fiber product:
\[
\begin{tikzcd}[column sep=4pc]
\bR^\rho_\bL(\phi,\psi)\arrow[r,dashed]\ar[d,dashed] & \bR^\rho_{\bL_2}(\varphi,\psi)\arrow[d,"\iota^*_L"]\\
\bR^\rho_{\bL_1}(\phi,\varphi)\arrow[r,"\iota^*_R"] & \cP_{[r]}
\end{tikzcd}.
\]
\end{lemma}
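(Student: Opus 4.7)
The plan is to establish a bijection between $\bR^\rho_\bL(\phi,\psi)$ and the stated fiber product by defining a restriction map in one direction and a gluing map in the other.

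First, I would observe that since all vertices of $\bL_1$ and $\bL_2$ lie in the interiors of their respective slabs, a collection $\Phi \in \cP^\rho_\bL$ of $\rho$-graded matchings decomposes canonically as $(\Phi_1,\Phi_2)\in \cP^\rho_{\bL_1}\times\cP^\rho_{\bL_2}$, and the resulting resolution factors as $\bL^\Phi = \bL_1^{\Phi_1}\cdot\bL_2^{\Phi_2}$. Hence any ruling $S_\fr\in\bR^\rho_\bL(\phi,\psi)$ automatically lives on a factored resolution. Cutting along the interface copy of $\sfI_r$ yields disjoint sets $\fr_i\subset\sC(\bL_i)\setminus\sB_i$ with $\fr=\fr_1\amalg\fr_2$, together with restricted decompositions $S_{\fr_i}$. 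The interface acquires a natural perfect matching $\varphi\in\cP^\rho_{[r]}$ induced by $S_\fr$: two strands are paired iff they lie in the same component of $S_\fr$. By construction $\iota^*_R(S_{\fr_1})=\varphi=\iota^*_L(S_{\fr_2})$, and the $\rho$-gradedness of $\varphi$ follows from the grading constraints $|\sfc|\in\rho\fR$ at the crossings and cusps internal to the components. One then checks that each $S_{\fr_i}$ is itself a $\rho$-graded normal ruling: the four allowed component types (eye, left/right half-eye, parallel) restrict under a vertical cut to allowed types, the grading condition is preserved crossing-by-crossing, and the non-interlacing condition is already local in a narrow vertical strip around each $\sfc\in\fr$.

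Conversely, given $S_{\fr_1}$ and $S_{\fr_2}$ compatible on the interface, I would concatenate them by setting $\fr\coloneqq\fr_1\amalg\fr_2$ and stitching the two decompositions strand-by-strand according to the common matching $\varphi$. The key point is that the components of $S_{\fr_1}$ and $S_{\fr_2}$ meeting the interface pair up exactly according to $\varphi$, so each glued piece falls into one of the four allowed types: a right half-eye of $\bL_1$ paired with a left half-eye of $\bL_2$ gives an eye, a parallel paired with a half-eye gives a half-eye, two parallels give a parallel, and so on. All remaining conditions are local and inherited from the two input rulings.

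The two maps are manifestly inverse, yielding the claimed bijection. The only nontrivial step, and hence the main obstacle, is the case analysis in the gluing construction: one must verify that every possible way of concatenating two decomposition pieces across the interface produces one of the four allowed types. This can be organized systematically by the relative positions of the strand endpoints on either side of the interface, and there are only finitely many cases (governed by which of the four types the pieces at the interface belong to on each side) so the verification is routine.
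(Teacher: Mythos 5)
Your argument is correct and is essentially the paper's proof, which merely asserts that two rulings glue ``in an obvious way'' if and only if the interface matchings $\iota_R^*(S_{\fr_1})$ and $\iota_L^*(S_{\fr_2})$ coincide and then invokes the universal property of the fiber product; you supply the restriction/gluing case analysis that the paper leaves implicit. One small terminological transposition: in the paper's conventions a left half-eye has type $(0,2)$ (it contains the left cusp) and a right half-eye has type $(2,0)$, so the components of $\bL_1$ reaching the interface are left half-eyes and parallels, and it is a left half-eye of $\bL_1$ glued to a right half-eye of $\bL_2$ that produces an eye --- but this does not affect the substance of your argument.
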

\begin{proof}
Let $S_{\fr_1}$ and $S_{\fr_2}$ be normal rulings for $\bL_1$ and $\bL_2$, respectively. Then they can be glued in an obvious way if and only if the perfect matchings given by $\iota_R^*(S_{\fr_1})$ and $\iota_L^*(S_{\fr_2})$ coincide.

Conversely, the two maps from $\bR^\rho_\bL(\phi,\psi)$ are given by the restrictions and the universal property of the fiber product, which completes the proof.
\end{proof}

\begin{lemma}\label{lemma:vertex bijections}
Let $\ell$ and $r$ be even. Then there are canonical bijections
\[
\bR^\rho_{\iota_R^*(\mathbf{0}_\ell)}\simeq \bR^\rho_{\mathbf{0}_\ell}\quad\text{and}\quad
\bR^\rho_{\iota_L^*(\pmb{\infty}_r)}\simeq \bR^\rho_{\mathbf{\infty}_r}.
\]
\end{lemma}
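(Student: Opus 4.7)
The plan is to identify both sides of each bijection with the set $\cP^\rho_{[\ell]}$ of $\rho$-graded perfect matchings on a single border with $\ell$ endpoints (respectively $\cP^\rho_{[r]}$ for the second bijection).

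First I would analyze $\bR^\rho_{\iota_R^*(\mathbf{0}_\ell)}=\bR^\rho_{\sfI_\ell}$. Since the trivial Legendrian $\sfI_\ell$ has no crossings, cusps, or vertices, every ruling necessarily has $\fr=\emptyset$, and the only possible decomposition $S_\fr$ is into $\ell/2$ parallels running straight across. This forces $\iota_L^*(S_\fr)=\iota_R^*(S_\fr)$, so a ruling is determined entirely by its common border matching $\phi$, and every $\phi\in\cP^\rho_{[\ell]}$ arises this way. This gives the canonical bijection $\bR^\rho_{\sfI_\ell}\simeq\cP^\rho_{[\ell]}$.

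Next I would analyze $\bR^\rho_{\mathbf{0}_\ell}$ by enumerating resolutions of the single vertex $\sfv$, which is of type $(0,\ell)$. For any matching $\phi$ of $\sfv$ we have $L=B=\emptyset$ and $R=[\ell]$, so the braid part $\sL_\sfv^\times$ is empty and the resolution $\mathbf{0}_\ell^\phi$ consists only of the $\ell/2$ marked left cusps paired according to $\phi$ (plus marked crossings arising where linked cusp pairs cross). In particular $\sC\setminus\sB=\emptyset$ for this resolution, so any ruling must have $\fr=\emptyset$ and its decomposition is forced to consist of $\ell/2$ right half-eyes, one per cusp. The induced right-border matching is then exactly $\phi$, and a direct check of the definition of $\cP^\rho_\sfv$ for a vertex of type $(0,\ell)$ shows that the gradedness condition coincides with the one defining $\cP^\rho_{[\ell]}$. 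Hence for each $\psi\in\cP^\rho_{[\ell]}$ exactly one $\phi$ contributes, namely $\phi=\psi$, yielding $\bR^\rho_{\mathbf{0}_\ell}\simeq\cP^\rho_{[\ell]}$. Composing produces the first bijection, and the second one for $\pmb{\infty}_r$ follows by the symmetric argument with marked right cusps and the left border.

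The main subtle point will be verifying uniqueness of the decomposition in $\mathbf{0}_\ell^\phi$ when two cusp pairs are linked and their corresponding half-eyes must cross at a marked crossing: I need to confirm that the decomposition cannot swap the strand pairing at such a crossing, so that the global pairing of strands is rigidly dictated by the cusp matching $\phi$. This is consistent in spirit with Remark~\ref{remark:marked Reidemeister move T}, which forbids cancellation of consecutive marked crossings; once that rigidity is checked, the rest is bookkeeping.
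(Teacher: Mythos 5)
Your proof is correct and follows essentially the same route as the paper's: both sides are identified with $\cP^\rho_{[\ell]}$, with $\fr=\emptyset$ forced on each resolution $\mathbf{0}_\ell^\phi$ because all of its crossings are marked, and the unique decomposition into $\ell/2$ half-eyes recovering $\phi$ on the right border. Two small points: the type-$(0,2)$ pieces you obtain are what the paper calls \emph{left} half-eyes (those containing a left cusp), not right half-eyes; and the uniqueness issue you flag at the end is harmless, since a decomposition never re-routes strands at a crossing outside $\fr$, so each cusped arc of $\mathbf{0}_\ell^\phi$ is forced to be its own half-eye regardless of how linked pairs cross.
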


\begin{proof}
Note that $\iota_R^*(\mathbf{0}_\ell)$ is the trivial Legendrian graph of $\ell$ strands, and it is easy to see that the set of $\rho$-graded normal rulings is the same as the set of $\rho$-graded matchings on the vertex $\mathbf{0}$
\[
\bR^\rho_{\iota_R^*(\mathbf{0}_\ell)} \simeq \cP_{\mathbf{0}}^\rho \simeq  \cP_{[\ell]}^\rho.
\]

On the other hand, $\bR^\rho_{\mathbf{0}_\ell}$ is the disjoint union of $\bR^\rho_{\mathbf{0}_\ell^\phi}(-,\eta)$ over all $\phi\in\cP_{\mathbf{0}}^\rho$ and $(-,\eta)\in \cP_{[0]}^\rho\times\cP_{[\ell]}^\rho$.
As seen earlier, all crossings in $\mathbf{0}_\ell^\phi$ are marked and there are no other choices of crossing for normal rulings, but $\fr=\emptyset$, so that the $0$-resolution on $\fr$ becomes $\mathbf{0}_\ell^\phi$ itself.
Since $\mathbf{0}_\ell^\phi$ is canonically decomposed into $\frac\ell2$ left half-eyes, $\fr=\emptyset$ becomes a normal ruling.
Namely, for each $\rho$-graded matching $\phi$, the ruling $\bR^\rho_{\mathbf{0}_\ell^\phi}(-,\eta)$ has a unique element only when $\phi=\eta$.
Hence we have
\begin{align*}
\bR^\rho_{\mathbf{0}_\ell} &=\coprod_{(-,\eta)\in \cP_{[0]}^\rho\times\cP_{[\ell]}^\rho} \left( \coprod_{\phi\in\cP_{\mathbf{0}}^\rho} \bR^\rho_{\mathbf{0}_\ell^\phi}(-,\eta)\right)= \coprod_{\phi\in\cP_{\mathbf{0}}^\rho} \bR^\rho_{\mathbf{0}_\ell^\phi}(-,\phi) \simeq \cP_{\mathbf{0}}^\rho\simeq \bR^\rho_{\iota_R^*(\mathbf{0}_\ell)}.
\end{align*}

Essentially the same proof applies for $\pmb{\infty}_r$ and we are done.
\end{proof}

\begin{corollary}\label{corollary:rulings for bordered Legendrian graphs}
Let $\bL$ be a marked bordered Legendrian graph of type $(\ell,r)$, where $\ell$ and $r$ are even. Then there is a bijection
$\bR^\rho_\bL\simeq \bR^\rho_{\hat\bL}$
given by 
$S_{\fr}\mapsto S_{\fr}$.
\end{corollary}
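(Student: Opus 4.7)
The plan is to read the closure $\hat\bL = \mathbf{0}_\ell \cdot \bL \cdot \pmb{\infty}_r$ as a two-step concatenation and then apply the fiber product description of Lemma~\ref{lemma:fiber product} together with the identifications of Lemma~\ref{lemma:vertex bijections}.

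First, iterating Lemma~\ref{lemma:fiber product}, the set $\bR^\rho_{\hat\bL}$ (which, since $\hat\bL$ has type $(0,0)$, carries no remaining boundary matching data) is the limit of the diagram
\[
\begin{tikzcd}[column sep=2.5pc]
\bR^\rho_{\mathbf{0}_\ell} \arrow[dr,"\iota_R^*"'] & & \bR^\rho_\bL \arrow[dl,"\iota_L^*"] \arrow[dr,"\iota_R^*"'] & & \bR^\rho_{\pmb{\infty}_r} \arrow[dl,"\iota_L^*"] \\
& \cP^\rho_{[\ell]} & & \cP^\rho_{[r]} &
\end{tikzcd}
\]
so an element of $\bR^\rho_{\hat\bL}$ is a triple $(S^L,S_\fr,S^R)$ compatible across the two interfaces.

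Next, Lemma~\ref{lemma:vertex bijections} says that the two outside maps $\iota_R^*\colon\bR^\rho_{\mathbf{0}_\ell}\to\cP^\rho_{[\ell]}$ and $\iota_L^*\colon\bR^\rho_{\pmb{\infty}_r}\to\cP^\rho_{[r]}$ are bijections: for each $\rho$-graded matching $\phi$ of the vertex of $\mathbf{0}_\ell$ (respectively, $\psi$ of $\pmb{\infty}_r$) there is exactly one ruling, namely $\fr=\emptyset$ with the canonical decomposition into $\ell/2$ (respectively, $r/2$) half-eyes. Hence in the fiber product, the outer factors $S^L$ and $S^R$ are forced by the interface matchings.

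Consequently the fiber product collapses to the disjoint union
\[
\bR^\rho_{\hat\bL} \;\simeq\; \coprod_{(\phi,\psi)\in\cP^\rho_{[\ell]}\times\cP^\rho_{[r]}} \bR^\rho_\bL(\phi,\psi) \;=\; \bR^\rho_\bL,
\]
under the map that sends the triple $(S^L,S_\fr,S^R)$ to $S_\fr$; this is precisely the map $S_\fr\mapsto S_\fr$ claimed in the statement. There is no real obstacle here once the two preparatory lemmas are in place; the only point requiring a brief check is that the left/right half-eye decompositions of $\mathbf{0}_\ell$ and $\pmb{\infty}_r$ glue to the decomposition $S_\fr$ of $\bL$ so that the resulting decomposition of $\hat\bL$ satisfies the axioms of a $\rho$-graded normal ruling at the boundary, which is immediate from the matching conditions $\iota_L^*(S_\fr)=\phi$, $\iota_R^*(S_\fr)=\psi$ imposed by the fiber product.
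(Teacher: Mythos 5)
Your proposal is correct and follows exactly the route the paper takes: the paper's proof of this corollary is the one-line observation that it is a direct consequence of Lemma~\ref{lemma:fiber product} and Lemma~\ref{lemma:vertex bijections}, and your argument is simply the worked-out version of that, iterating the fiber product over the two concatenations in $\hat\bL=\mathbf{0}_\ell\cdot\bL\cdot\pmb{\infty}_r$ and using the bijections to collapse the outer factors. Nothing is missing.
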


\begin{proof}
This is a direct consequence of Lemma~\ref{lemma:fiber product} and Lemma~\ref{lemma:vertex bijections}.
\end{proof}

\begin{definition}
Let $S_{\fr}\in\bR^\rho_\bL$. The {\em weight $\wt(S_{\fr})$} of $S_{\fr}$ is defined as
\begin{align*}
\wt(S_{\fr})&\coloneqq z^{n(S_{\fr})},&
&\text{where }n(S_{\fr})\coloneqq\#(\fr)-\frac{\#(\lcusp_{\sL_\fr})+\#(\rcusp_{\sL_\fr})}2\in\frac12\Z
\end{align*}
and $\sL_\fr$ is a bordered Legendrian link obtained by the $0$-resolution on $\fr$.
\end{definition}

It is obvious that 
\[
n(S_{\fr})=\#(\fr)- \#(\{\text{eyes in }S_\fr\})-\frac 12\#(\{\text{half-eyes in }S_\fr\}).
\]

\begin{definition}
The {\em $\rho$-graded ruling polynomial $R^\rho_\bL(\phi,\psi)$} of $\bL$ is the sum of weights of $\rho$-graded normal rulings of type $(\phi,\psi)$:
\[
R^\rho_\bL(\phi,\psi)\coloneqq
\sum_{S_{\fr}\in\bR^\rho_\bL(\phi,\psi)} \wt(S_{\fr})\in\Z[z^{\pm\frac12}].
\]
\end{definition}

\begin{corollary}\label{corollary:multiplicative under concatenation}
Suppose that $\bL_1$ and $\bL_2$ are of types $(\ell,r)$ and $(r,s)$, respectively and $\bL=\bL_1\cdot\bL_2$. Then
\[
R^\rho_\bL(\phi,\varphi)=\sum_{\psi\in\cP_{[r]}}
R^\rho_{\bL_1}(\phi,\psi)
R^\rho_{\bL_2}(\psi,\varphi).
\]
\end{corollary}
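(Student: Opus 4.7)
The plan is to reduce the claim to Lemma~\ref{lemma:fiber product} together with a simple additivity property of the weight function under concatenation. Given the definition $R^\rho_\bL(\phi,\varphi) = \sum_{S_\fr \in \bR^\rho_\bL(\phi,\varphi)} \wt(S_\fr)$, the sum on the right-hand side of the corollary is naturally organized by the induced matching $\psi \in \cP^\rho_{[r]}$ sitting on the gluing copy of $\sfI_r$, so the entire statement is essentially a reindexing of the sum once we know that weights multiply.

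First I would invoke Lemma~\ref{lemma:fiber product} to identify
\[
\bR^\rho_\bL(\phi,\varphi) \;\simeq\; \coprod_{\psi \in \cP^\rho_{[r]}} \bR^\rho_{\bL_1}(\phi,\psi) \times \bR^\rho_{\bL_2}(\psi,\varphi),
\]
where a ruling $S_\fr$ on $\bL$ corresponds to the pair $(S_{\fr_1}, S_{\fr_2})$ obtained by restricting to $\bL_1$ and $\bL_2$, and $\psi = \iota_R^*(S_{\fr_1}) = \iota_L^*(S_{\fr_2})$.

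Next I would verify the weight identity $\wt(S_\fr) = \wt(S_{\fr_1}) \cdot \wt(S_{\fr_2})$. Since $\bL_1$ and $\bL_2$ are glued along $\sfI_r$, which contains no crossings, cusps, or vertices, the set $\fr \subset \sC\setminus \sB$ decomposes as a disjoint union $\fr = \fr_1 \amalg \fr_2$ (so $\#(\fr) = \#(\fr_1) + \#(\fr_2)$), and the $0$-resolution $\sL_\fr$ is the concatenation $\sL_{\fr_1} \cdot \sL_{\fr_2}$, which introduces no new left or right cusps at the seam. Hence
\[
n(S_\fr) = \#(\fr_1) + \#(\fr_2) - \frac{\#(\lcusp_{\sL_{\fr_1}}) + \#(\rcusp_{\sL_{\fr_1}}) + \#(\lcusp_{\sL_{\fr_2}}) + \#(\rcusp_{\sL_{\fr_2}})}{2} = n(S_{\fr_1}) + n(S_{\fr_2}),
\]
so $\wt(S_\fr) = z^{n(S_\fr)} = z^{n(S_{\fr_1})} \cdot z^{n(S_{\fr_2})}$.

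Combining these, I would compute
\[
R^\rho_\bL(\phi,\varphi) = \sum_{\psi \in \cP^\rho_{[r]}} \sum_{(S_{\fr_1}, S_{\fr_2}) \in \bR^\rho_{\bL_1}(\phi,\psi) \times \bR^\rho_{\bL_2}(\psi,\varphi)} \wt(S_{\fr_1}) \wt(S_{\fr_2}) = \sum_{\psi \in \cP^\rho_{[r]}} R^\rho_{\bL_1}(\phi,\psi) \, R^\rho_{\bL_2}(\psi,\varphi),
\]
which is the desired identity. The only subtle point is the bookkeeping of weights at the gluing interface, but since $\sfI_r$ contributes neither crossings nor cusps and the non-interlacing conditions are local to each crossing, additivity of $n$ is immediate; this is the only place where one might worry, and it presents no real obstacle.
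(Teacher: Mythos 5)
Your proof is correct and follows the same route as the paper, which simply cites Lemma~\ref{lemma:fiber product} and calls the rest obvious; you have usefully spelled out the weight-multiplicativity step that the paper leaves implicit. The only cosmetic difference is that you sum over $\cP^\rho_{[r]}$ rather than $\cP_{[r]}$, which is harmless since the ruling sets are empty for matchings that are not $\rho$-graded.
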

\begin{proof}
This is obvious by Lemma~\ref{lemma:fiber product}.
\end{proof}

\begin{remark}
Evidently, $R^\rho_\bL$ can be regarded as a linear transformation from $R^\rho_{\iota_L^*(\bL)}$ to $R^\rho_{\iota_R^*(\bL)}$, whose $(\phi,\psi)$-entry is given precisely by $R^\rho_\bL(\phi,\psi)$.
\end{remark}

The following theorem will be proven later.

\begin{theorem}[Invariance theorem]
\label{theorem:invariance for graphs}
The set of $\rho$-graded normal rulings $\bR^\rho_\bL(\phi,\psi)$ of type $(\phi,\psi)$ transforms bijectively under equivalences of marked bordered Legendrian graphs.
In particular, the polynomial $R^\rho_\bL(\phi,\psi)$ is invariant.
\end{theorem}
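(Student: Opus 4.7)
The plan is to localize the problem via Lemma~\ref{lemma:fiber product}: for each Reidemeister or marked Reidemeister move $\bL_0 \leftrightarrow \bL_0'$ appearing in Definition~\ref{Definition:equivalence of marked graphs}, it is enough to produce a weight-preserving bijection $\bR^\rho_{\bL_0}(\phi,\psi) \simeq \bR^\rho_{\bL_0'}(\phi,\psi)$ in every type $(\phi,\psi)$. I would split the moves into three families and treat each separately.

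The first family consists of the ordinary Reidemeister moves (I), (II), (III) performed away from any vertex or black dot. In this case every full resolution $\mathbf{\Lambda} \in \tilde\bL_0$ is transformed into a full resolution of $\bL_0'$ with the same markings, and the classical invariance of normal rulings of Legendrian links (Chekanov--Pushkar, Fuchs) supplies the bijection on $\bR^\rho_{\mathbf{\Lambda}}$ componentwise. The second family is the marked Reidemeister moves (0), (II), (III), (S), (T) of Figure~\ref{figure:marked Reidemeister moves}. These take place entirely inside a neighborhood of a marked cusp or a permutation-braid factor of a resolution, so one checks by direct case analysis that they preserve the decomposition data $(\fr, S_\fr)$: in particular, move (T) and the marked (III) translate to Artin braid relations among the permutation braids in the factors $\sL_{\beta^c}$ and $\sL_{\bar{\beta^c}}$, so the counts of eyes, half-eyes and parallels in any ruling are preserved.

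The substantive work is the third family, comprising the vertex-involving moves $(0_a)$--$(0_f)$, (IV), and (V). For each such move I would prove that for every $\Phi \in \cP^\rho_{\bL_0}$ there is a canonically associated $\Phi' \in \cP^\rho_{\bL_0'}$ (and conversely) such that the resolutions $\bL_0^\Phi$ and $(\bL_0')^{\Phi'}$ are equivalent as marked bordered Legendrian links via a sequence of moves from the first two families. Combined with Corollary~\ref{corollary:rulings for bordered Legendrian graphs}, this yields the desired bijection on rulings. The moves $(0_a)$--$(0_f)$ are essentially trivial because the matching data at the vertex is unchanged by the isotopy. The nontrivial cases are (IV) and (V): one partitions $\cP^\rho_{\bL_0}$ according to how the passing strand is linked to the half-edges at the vertex, and inside each class builds the required bijection by invoking the explicit inductive construction of $\sL_\beta$, $\sL_{\beta^c}$, $\sL_{\bar{\beta^c}}$ together with marked moves (S) and (T).

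The main obstacle is the verification for move (V), where a strand crosses over a vertex. Here the combinatorics of the matchings shifts in a non-canonical way, so one must manipulate the permutation-braid factors carefully so that the marked-crossing structure and the $\rho$-grading condition are both respected at every intermediate stage. Once this is established, weight preservation is automatic: every move in all three families modifies $\#(\fr)$ and the numbers of left and right cusps of $\sL_\fr$ by compatible amounts, so the quantity $n(S_\fr)=\#(\fr)-\tfrac12\bigl(\#(\lcusp_{\sL_\fr})+\#(\rcusp_{\sL_\fr})\bigr)$ is invariant and thus so is $R^\rho_\bL(\phi,\psi)$.
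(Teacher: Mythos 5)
Your outline matches the paper's proof: the paper likewise reduces to the vertex-involving moves (the ordinary and marked Reidemeister moves being handled by the classical theory and a direct check), proves that marked pure braids and marked cusps commute past strands, cusps and crossings to dispose of the moves of types $(0)$ and $\rm(IV)$, and then treats move $\rm(V)$ by the same case split on whether the top-left arc at the vertex is matched into the braid factor or becomes a marked cusp. The only ingredient you leave implicit is the paper's Lemma~\ref{lemma:delta conjugate}, which gives a weight-preserving identification of the rulings of $\sL_{\beta}\sL_{\beta^c\bar{\beta^c}}$ with those of $\sL_{\tau(\bar{\beta^c})\tau(\beta^c)}\sL_{\beta}$ and is the precise device used to ``manipulate the permutation-braid factors'' in the marked-cusp case of $\rm(V)$.
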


More precisely, there is a weight-preserving bijection between the sets $\bR^\rho_\bL(\phi,\psi)$ and $\bR^\rho_{\bL'}(\phi,\psi)$ of $\rho$-graded normal rulings for any two equivalent marked bordered Legendrian graphs $\bL$ and $\bL'$.


By the following corollary, whose proof is obvious, the $\rho$-graded ruling polynomial defined above recovers the earlier notion for Legendrian links.

\begin{corollary}
Let $\bL=(\sL,\mu)$ be a Legendrian link with a Maslov potential. Then the $\rho$-graded ruling polynomial $R^\rho_\bL(\emptyset,\emptyset)$ is the same as the $\rho$-graded ruling polynomial defined by Chekanov \cite{Chekanov2002}.
\end{corollary}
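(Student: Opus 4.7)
The plan is to verify that when $\bL$ degenerates to an unmarked ordinary (closed) Legendrian link, each ingredient of the new framework reduces on the nose to its classical counterpart in \cite{Chekanov2002}. The whole argument is a definition chase with three comparisons to make; no step requires real work, but all three must be checked.

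First, I would trivialize the resolution and boundary data. Since $\sV_\sL = \emptyset$, the product $\cP^\rho_\bL = \prod_{\sfv\in\sV_\sL}\cP^\rho_\sfv$ is a singleton and hence $\tilde\bL = \{\bL\}$, so there is no sum over resolutions to worry about. Since $\bL$ has type $(0,0)$, the pair $(\phi,\psi)\in\cP^\rho_{[0]}\times\cP^\rho_{[0]}$ appearing in the definition is forced to be $(\emptyset,\emptyset)$, and since $\sB=\emptyset$ we have $\sC\setminus\sB=\sC$, so the switched set $\fr$ ranges over all subsets of $\sC$.

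Second, I would match the three remaining conditions in the definition of $\bR^\rho_\bL(\emptyset,\emptyset)$ against Chekanov's conditions on a $\rho$-graded normal ruling. Condition~(1) that $|\sfc|\in\rho\fR$ at each switched crossing is exactly Chekanov's gradedness; condition~(3) is the standard non-interlacing requirement at each switch; and condition~(2) simplifies because a decomposition of a closed (no-boundary) diagram cannot involve left half-eyes, right half-eyes, or parallels --- all its pieces must be eyes. This last observation is what identifies $S_\fr$ with Chekanov's ruling-disk decomposition of the $0$-resolution $\sL_\fr$.

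Third, I would match the weights. Each eye of $S_\fr$ contributes exactly one left cusp and one right cusp to $\sL_\fr$, and conversely every cusp of $\sL_\fr$ arises this way, giving
\[
\#(\text{eyes in }S_\fr) = \#(\lcusp_{\sL_\fr}) = \#(\rcusp_{\sL_\fr}),
\]
so $n(S_\fr) = \#(\fr) - \#(\text{eyes})$ agrees with Chekanov's exponent $s(R) - d(R)$ (number of switches minus number of ruling disks). Summing $z^{n(S_\fr)}$ over $\bR^\rho_\bL(\emptyset,\emptyset)$ then produces precisely Chekanov's $\rho$-graded ruling polynomial. The only potential snag is bookkeeping the equivalence between Chekanov's combinatorial pairing of strands and our decomposition $S_\fr$ of $\sL_\fr$ into eyes, but this is standard and imposes no real obstacle.
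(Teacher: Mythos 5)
Your proposal is correct and is exactly the definition chase the paper has in mind: the paper states this corollary with the remark that its proof is obvious and gives no further argument, and your three comparisons (trivial resolution/boundary data, only eyes surviving in the decomposition of a type-$(0,0)$ diagram, and the weight $n(S_\fr)=\#(\fr)-\#(\text{eyes})$ matching Chekanov's exponent) are precisely the checks that make it so. No gaps.
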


\begin{example}\label{ex:pinched trefoil}
Let us consider the following front diagram $\sL$ of a Legendrian graph $L$ having one $4$-valent vertex. Here the other double point of the projection is not a vertex but a regular crossing.
\begin{align*}
\sL=\vcenter{\hbox{\scalebox{1.5}{
\begingroup%
  \makeatletter%
  \providecommand\color[2][]{%
    \errmessage{(Inkscape) Color is used for the text in Inkscape, but the package 'color.sty' is not loaded}%
    \renewcommand\color[2][]{}%
  }%
  \providecommand\transparent[1]{%
    \errmessage{(Inkscape) Transparency is used (non-zero) for the text in Inkscape, but the package 'transparent.sty' is not loaded}%
    \renewcommand\transparent[1]{}%
  }%
  \providecommand\rotatebox[2]{#2}%
  \ifx\svgwidth\undefined%
    \setlength{\unitlength}{47.99999987bp}%
    \ifx\svgscale\undefined%
      \relax%
    \else%
      \setlength{\unitlength}{\unitlength * \real{\svgscale}}%
    \fi%
  \else%
    \setlength{\unitlength}{\svgwidth}%
  \fi%
  \global\let\svgwidth\undefined%
  \global\let\svgscale\undefined%
  \makeatother%
  \begin{picture}(1,0.51116667)%
    \put(0,0){\includegraphics[width=\unitlength,page=1]{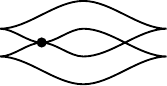}}%
    \put(0.29947885,0.30245929){\color[rgb]{0,0,0}\makebox(0,0)[lb]{\smash{$\scriptscriptstyle1$}}}%
    \put(0.29947148,0.1462091){\color[rgb]{0,0,0}\makebox(0,0)[lb]{\smash{$\scriptscriptstyle2$}}}%
    \put(0.14605767,0.14620882){\color[rgb]{0,0,0}\makebox(0,0)[lb]{\smash{$\scriptscriptstyle4$}}}%
    \put(0.14608348,0.30245831){\color[rgb]{0,0,0}\makebox(0,0)[lb]{\smash{$\scriptscriptstyle3$}}}%
  \end{picture}%
\endgroup%
}}}
\end{align*}
The possible resolutions are as follows:
\begin{align*}
\sL_-=\vcenter{\hbox{\scalebox{1.5}{
\begingroup%
  \makeatletter%
  \providecommand\color[2][]{%
    \errmessage{(Inkscape) Color is used for the text in Inkscape, but the package 'color.sty' is not loaded}%
    \renewcommand\color[2][]{}%
  }%
  \providecommand\transparent[1]{%
    \errmessage{(Inkscape) Transparency is used (non-zero) for the text in Inkscape, but the package 'transparent.sty' is not loaded}%
    \renewcommand\transparent[1]{}%
  }%
  \providecommand\rotatebox[2]{#2}%
  \ifx\svgwidth\undefined%
    \setlength{\unitlength}{48bp}%
    \ifx\svgscale\undefined%
      \relax%
    \else%
      \setlength{\unitlength}{\unitlength * \real{\svgscale}}%
    \fi%
  \else%
    \setlength{\unitlength}{\svgwidth}%
  \fi%
  \global\let\svgwidth\undefined%
  \global\let\svgscale\undefined%
  \makeatother%
  \begin{picture}(1,0.51116667)%
    \put(0,0){\includegraphics[width=\unitlength,page=1]{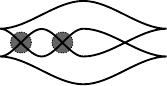}}%
  \end{picture}%
\endgroup%
}}}~,\qquad  \sL_0=\vcenter{\hbox{\scalebox{1.5}{
\begingroup%
  \makeatletter%
  \providecommand\color[2][]{%
    \errmessage{(Inkscape) Color is used for the text in Inkscape, but the package 'color.sty' is not loaded}%
    \renewcommand\color[2][]{}%
  }%
  \providecommand\transparent[1]{%
    \errmessage{(Inkscape) Transparency is used (non-zero) for the text in Inkscape, but the package 'transparent.sty' is not loaded}%
    \renewcommand\transparent[1]{}%
  }%
  \providecommand\rotatebox[2]{#2}%
  \ifx\svgwidth\undefined%
    \setlength{\unitlength}{48bp}%
    \ifx\svgscale\undefined%
      \relax%
    \else%
      \setlength{\unitlength}{\unitlength * \real{\svgscale}}%
    \fi%
  \else%
    \setlength{\unitlength}{\svgwidth}%
  \fi%
  \global\let\svgwidth\undefined%
  \global\let\svgscale\undefined%
  \makeatother%
  \begin{picture}(1,0.51116667)%
    \put(0,0){\includegraphics[width=\unitlength,page=1]{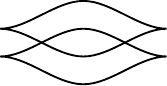}}%
  \end{picture}%
\endgroup%
}}}~,\qquad  \sL_\infty=\vcenter{\hbox{\scalebox{1.5}{
\begingroup%
  \makeatletter%
  \providecommand\color[2][]{%
    \errmessage{(Inkscape) Color is used for the text in Inkscape, but the package 'color.sty' is not loaded}%
    \renewcommand\color[2][]{}%
  }%
  \providecommand\transparent[1]{%
    \errmessage{(Inkscape) Transparency is used (non-zero) for the text in Inkscape, but the package 'transparent.sty' is not loaded}%
    \renewcommand\transparent[1]{}%
  }%
  \providecommand\rotatebox[2]{#2}%
  \ifx\svgwidth\undefined%
    \setlength{\unitlength}{48bp}%
    \ifx\svgscale\undefined%
      \relax%
    \else%
      \setlength{\unitlength}{\unitlength * \real{\svgscale}}%
    \fi%
  \else%
    \setlength{\unitlength}{\svgwidth}%
  \fi%
  \global\let\svgwidth\undefined%
  \global\let\svgscale\undefined%
  \makeatother%
  \begin{picture}(1,0.51116667)%
    \put(0,0){\includegraphics[width=\unitlength,page=1]{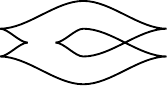}}%
  \end{picture}%
\endgroup%
}}}~.
\end{align*}
Since $R_1(\sL_-)=z^{-1}$, $R_1(\sL_0)=z^{-2}+1$, and $R_1(\sL_\infty)=0$, we have $R_1(\sL)=z^{-2}+z^{-1}+1$.

Let us also consider a different front diagram $\sL'$ of $L$.
\begin{align*}
\sL'=\vcenter{\hbox{
\begingroup%
  \makeatletter%
  \providecommand\color[2][]{%
    \errmessage{(Inkscape) Color is used for the text in Inkscape, but the package 'color.sty' is not loaded}%
    \renewcommand\color[2][]{}%
  }%
  \providecommand\transparent[1]{%
    \errmessage{(Inkscape) Transparency is used (non-zero) for the text in Inkscape, but the package 'transparent.sty' is not loaded}%
    \renewcommand\transparent[1]{}%
  }%
  \providecommand\rotatebox[2]{#2}%
  \ifx\svgwidth\undefined%
    \setlength{\unitlength}{74.40812202bp}%
    \ifx\svgscale\undefined%
      \relax%
    \else%
      \setlength{\unitlength}{\unitlength * \real{\svgscale}}%
    \fi%
  \else%
    \setlength{\unitlength}{\svgwidth}%
  \fi%
  \global\let\svgwidth\undefined%
  \global\let\svgscale\undefined%
  \makeatother%
  \begin{picture}(1,0.44081209)%
    \put(0,0){\includegraphics[width=\unitlength,page=1]{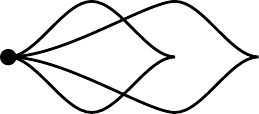}}%
  \end{picture}%
\endgroup%
}}
\end{align*}
The two are indeed equivalent through the following Legendrian isotopy: 
\begin{align*}
\vcenter{\hbox{
\begingroup%
  \makeatletter%
  \providecommand\color[2][]{%
    \errmessage{(Inkscape) Color is used for the text in Inkscape, but the package 'color.sty' is not loaded}%
    \renewcommand\color[2][]{}%
  }%
  \providecommand\transparent[1]{%
    \errmessage{(Inkscape) Transparency is used (non-zero) for the text in Inkscape, but the package 'transparent.sty' is not loaded}%
    \renewcommand\transparent[1]{}%
  }%
  \providecommand\rotatebox[2]{#2}%
  \ifx\svgwidth\undefined%
    \setlength{\unitlength}{324.8142749bp}%
    \ifx\svgscale\undefined%
      \relax%
    \else%
      \setlength{\unitlength}{\unitlength * \real{\svgscale}}%
    \fi%
  \else%
    \setlength{\unitlength}{\svgwidth}%
  \fi%
  \global\let\svgwidth\undefined%
  \global\let\svgscale\undefined%
  \makeatother%
  \begin{picture}(1,0.12561024)%
    \put(0,0){\includegraphics[width=\unitlength,page=1]{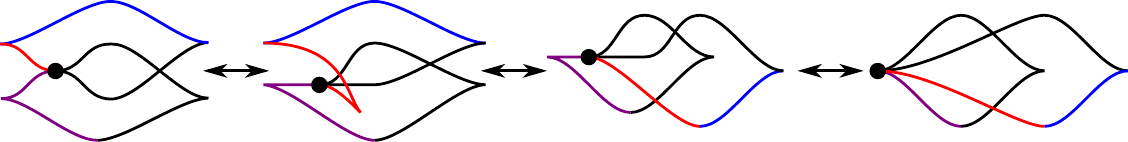}}%
  \end{picture}%
\endgroup%
}}~.
\end{align*}
Here the arcs of like color between consecutive front diagrams indicate arcs corresponding via Reidemeister moves.

For $\sL'$, the possible resolutions, cf.\ Example~\ref{ex:resolution of 4-valent}, are as follows:
\begin{align*}
\sL'_1=\vcenter{\hbox{
\begingroup%
  \makeatletter%
  \providecommand\color[2][]{%
    \errmessage{(Inkscape) Color is used for the text in Inkscape, but the package 'color.sty' is not loaded}%
    \renewcommand\color[2][]{}%
  }%
  \providecommand\transparent[1]{%
    \errmessage{(Inkscape) Transparency is used (non-zero) for the text in Inkscape, but the package 'transparent.sty' is not loaded}%
    \renewcommand\transparent[1]{}%
  }%
  \providecommand\rotatebox[2]{#2}%
  \ifx\svgwidth\undefined%
    \setlength{\unitlength}{68bp}%
    \ifx\svgscale\undefined%
      \relax%
    \else%
      \setlength{\unitlength}{\unitlength * \real{\svgscale}}%
    \fi%
  \else%
    \setlength{\unitlength}{\svgwidth}%
  \fi%
  \global\let\svgwidth\undefined%
  \global\let\svgscale\undefined%
  \makeatother%
  \begin{picture}(1,0.6)%
    \put(0,0){\includegraphics[width=\unitlength,page=1]{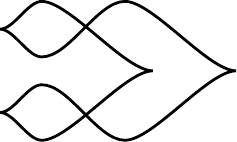}}%
  \end{picture}%
\endgroup%
}}~, \qquad  
\sL'_2=\vcenter{\hbox{
\begingroup%
  \makeatletter%
  \providecommand\color[2][]{%
    \errmessage{(Inkscape) Color is used for the text in Inkscape, but the package 'color.sty' is not loaded}%
    \renewcommand\color[2][]{}%
  }%
  \providecommand\transparent[1]{%
    \errmessage{(Inkscape) Transparency is used (non-zero) for the text in Inkscape, but the package 'transparent.sty' is not loaded}%
    \renewcommand\transparent[1]{}%
  }%
  \providecommand\rotatebox[2]{#2}%
  \ifx\svgwidth\undefined%
    \setlength{\unitlength}{72bp}%
    \ifx\svgscale\undefined%
      \relax%
    \else%
      \setlength{\unitlength}{\unitlength * \real{\svgscale}}%
    \fi%
  \else%
    \setlength{\unitlength}{\svgwidth}%
  \fi%
  \global\let\svgwidth\undefined%
  \global\let\svgscale\undefined%
  \makeatother%
  \begin{picture}(1,0.56666667)%
    \put(0,0){\includegraphics[width=\unitlength,page=1]{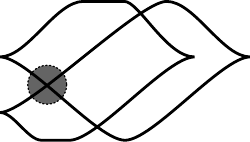}}%
  \end{picture}%
\endgroup%
}}~,\qquad  \sL'_3=\vcenter{\hbox{
\begingroup%
  \makeatletter%
  \providecommand\color[2][]{%
    \errmessage{(Inkscape) Color is used for the text in Inkscape, but the package 'color.sty' is not loaded}%
    \renewcommand\color[2][]{}%
  }%
  \providecommand\transparent[1]{%
    \errmessage{(Inkscape) Transparency is used (non-zero) for the text in Inkscape, but the package 'transparent.sty' is not loaded}%
    \renewcommand\transparent[1]{}%
  }%
  \providecommand\rotatebox[2]{#2}%
  \ifx\svgwidth\undefined%
    \setlength{\unitlength}{72bp}%
    \ifx\svgscale\undefined%
      \relax%
    \else%
      \setlength{\unitlength}{\unitlength * \real{\svgscale}}%
    \fi%
  \else%
    \setlength{\unitlength}{\svgwidth}%
  \fi%
  \global\let\svgwidth\undefined%
  \global\let\svgscale\undefined%
  \makeatother%
  \begin{picture}(1,0.45555556)%
    \put(0,0){\includegraphics[width=\unitlength,page=1]{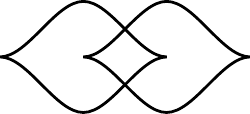}}%
  \end{picture}%
\endgroup%
}}~.
\end{align*}
It is straightforward to check that $R_1(\sL_-)=R_1(\sL'_2)$, $R_1(\sL_0)=R_1(\sL'_3)$, and $R_1(\sL_\infty)=R_1(\sL'_1)$ which implies \(R_1(\sL)=R_1(\sL')\).
\end{example}

\subsection{Ruling invariants for Legendrians in \texorpdfstring{$\#^k(S^1\times S^2)$}{the connected sums of the product of the circle and sphere}}

Let $\bn=(n_1,\dots,n_k)$ be a finite sequence of positive even integers and put $n=n_1+\cdots+n_k$.
We denote the product of the sets of perfect matchings $\cP_{[n_i]}$'s by 
\[
\cP_{[\bn]}\coloneqq \prod_{i=1}^k \cP_{[n_i]}.
\]
Via the identification
\begin{align*}
\coprod_{i=1}^k [n_i] &= \{1,\dots, n_1\}\coprod\{1,\dots, n_2\}\coprod\cdots \coprod\{1,\dots, n_k\}\\
&\simeq \{1,\dots,n_1, n_1+1,\dots, n_1+n_2,\dots,n\}= [n]
\end{align*}
we may regard $\cP_{[\bn]}$ as a subset of $\cP_{[n]}$.


Let $(\bL,\bn)$ be a pair of a marked bordered Legendrian graph $\bL$ of type $(n,n)$ and a finite sequence of integers $\bn=(n_i)$ with $n=\sum n_i$. 
We denote the set of $\rho$-graded normal rulings and tensors of type $(\phi,\phi)$ for some $\phi\in\cP_{[\bn]}$ as follows:
\begin{align*}
\bR^\rho_{\bL,\bn}&\coloneqq \coprod_{\phi\in\cP_{[\bn]}} \bR^\rho_\bL(\phi,\phi),&
\cR^\rho_{\bL,\bn}&\coloneqq  \sum_{\phi\in\cP_{[\bn]}} R^\rho_\bL(\phi,\phi)\cdot(\phi\otimes\phi^*).
\end{align*}

By assigning each $(\phi\otimes\phi^*)$ to the unit $1\in\Z[z^{\pm\frac 12}]$, we have the {\em $\rho$-graded ruling polynomial $R^\rho_{\bL,\bn}$} for the pair $(\bL,\bn)$
\[
R^\rho_{\bL,\bn}\coloneqq\sum_{\phi\in\cP_{[\bn]}} R^\rho_\bL(\phi,\phi) \in \Z[z^{\pm1}].
\]

\begin{corollary}\label{corollary:invariance for pair}
The set $\bR^\rho_{\bL,\bn}$ of $\rho$-graded normal rulings 
transfroms bijectively
and the polynomial $R^\rho_{\bL,\bn}$ is invariant under equivalences.
\end{corollary}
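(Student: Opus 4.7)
The plan is to deduce this corollary directly from Theorem~\ref{theorem:invariance for graphs} by passing to the relevant disjoint union. The key observation is that an equivalence of marked bordered Legendrian graphs preserves the type $(n,n)$, so the indexing sets $\cP_{[n]}$ and the distinguished subset $\cP_{[\bn]}\subset\cP_{[n]}$ are intrinsic to the equivalence class. The theorem already produces, for each fixed pair $(\phi,\psi)\in\cP_{[n]}\times\cP_{[n]}$, a weight-preserving bijection between $\bR^\rho_\bL(\phi,\psi)$ and $\bR^\rho_{\bL'}(\phi,\psi)$ whenever $\bL$ and $\bL'$ are equivalent.

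First, I would specialize to the diagonal: for each $\phi\in\cP_{[\bn]}$, the theorem gives a weight-preserving bijection
\[
\bR^\rho_\bL(\phi,\phi)\;\simeq\;\bR^\rho_{\bL'}(\phi,\phi).
\]
Then I would take the disjoint union over all $\phi\in\cP_{[\bn]}$ to obtain, by definition,
\[
\bR^\rho_{\bL,\bn}=\coprod_{\phi\in\cP_{[\bn]}}\bR^\rho_\bL(\phi,\phi)\;\simeq\;\coprod_{\phi\in\cP_{[\bn]}}\bR^\rho_{\bL'}(\phi,\phi)=\bR^\rho_{\bL',\bn}.
\]
Since each component bijection preserves the weight $\wt(S_{\fr})=z^{n(S_{\fr})}$, summing yields
\[
R^\rho_{\bL,\bn}=\sum_{\phi\in\cP_{[\bn]}}R^\rho_\bL(\phi,\phi)=\sum_{\phi\in\cP_{[\bn]}}R^\rho_{\bL'}(\phi,\phi)=R^\rho_{\bL',\bn},
\]
proving invariance of the polynomial.

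There is essentially no obstacle beyond verifying that nothing about the equivalence moves couples different choices of $\phi\in\cP_{[\bn]}$: each Reidemeister bijection of Theorem~\ref{theorem:invariance for graphs} is constructed locally and preserves $\iota_L^*(S_\fr)$ and $\iota_R^*(S_\fr)$, so the condition $\iota_L^*(S_\fr)=\iota_R^*(S_\fr)=\phi\in\cP_{[\bn]}$ is manifestly preserved. Thus the corollary reduces to repackaging the theorem, and the only content to check is this compatibility of the local bijections with the diagonal and with membership in $\cP_{[\bn]}\subset\cP_{[n]}$.
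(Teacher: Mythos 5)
Your proposal is correct and matches the paper's own (one-line) proof: both deduce the corollary by applying Theorem~\ref{theorem:invariance for graphs} to each diagonal set $\bR^\rho_\bL(\phi,\phi)$ with $\phi\in\cP_{[\bn]}$ and then passing to the disjoint union and the weighted sum. Your extra remark that the local bijections preserve the boundary matchings $\iota_L^*(S_\fr)$ and $\iota_R^*(S_\fr)$, hence the diagonal condition, is a reasonable point the paper leaves implicit.
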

\begin{proof}
Since each $\bR^\rho_\bL(\phi,\phi)$ is invariant under equivalences by Theorem~\ref{theorem:invariance for graphs}, so is their union $\bR^\rho_{\bL,\bn}$.
\end{proof}

Let us consider the closure $\hat{(\bL,\bn)}$, which is a Legendrian graph that has $2k$ more vertices $\{\sfv_1,\dots, \sfv_k, \sfv_1',\dots, \sfv_k'\}$ than $\bL$ where each $\sfv_i$ and $\sfv_i'$ close $n_i$ borders from the left and the right, respectively. See Figure~\ref{figure:partialclosures}.
Then the set of normal rulings $\bR^\rho_{\bL,\bn}$ is the subset of the set of normal rulings in $\bR^\rho_{\hat{(\sL,\bn)}}$ such that the matchings on the $\sfv_i$ and on $\sfv_i'$ coincide for each $i$.

\begin{figure}[ht]
\[
(\bL,\bn)=\vcenter{\hbox{\input{L_n_input.tex}}}\quad\stackrel{\hat{}}\longrightarrow\quad
\hat{(\bL,\bn)}=\vcenter{\hbox{\input{L_n_closure_input.tex}}}
\]
\caption{The closure of a pair $(\bL,\bn)$ with $\bn=(2,4,4)$.}
\label{figure:partialclosures}
\end{figure}

On the other hand, the pair $(\bL,\bn)$ can be regarded as a {\em Gompf standard form} of a marked Legendrian link $[\bL,\bn]$ defined in \cite[Definition~2.1]{Gompf1998}, with a Maslov potential, in the $k$-fold connected sum $M_k\coloneqq\#^k(S^2\times S^1)$.
Here the contact manifold $M_k$ is the boundary of the four-manifold $W_k$ obtained from $\R^4$ by attaching $k$ 1-handles, or equivalently, $M_k$ is obtained by identifying pairs of boundary spheres in $\R^3$ with $2k$ balls removed.
In this description, each boundary component plays the role of the \emph{co-core} of the corresponding 1-handle.
\begin{align*}
M_k&=\#^k(S^2\times S^1)\cong \R^3\setminus \bigcup_{i=1}^{k} (\mathring{B}_{\mathsf{L},i}^3\cup\mathring{B}_{\mathsf{R},i}^3)\bigg/ S^2_{\mathsf{L},i}\sim S^2_{\mathsf{R},i},&
S^2_{*,i}&=\partial B^3_{*,i}\\
&\cong \partial W_k,\\
W_k&\coloneqq \R^4\cup\bigcup_{i=1}^k I\times D^3.
\end{align*}


\begin{figure}[ht]
\[
(\bL,\bn)=\vcenter{\hbox{\input{L_n_input.tex}}}\quad\stackrel{[\cdot]}\longrightarrow\quad
[\bL,\bn]=\vcenter{\hbox{\input{L_n_Gompf_input.tex}}}\subset M_k
\]
\caption{A Gompf standard form corresponding to a pair $(\bL,\bn)$ with $\bn=(2,4,4)$.}
\label{figure:gompf normal form}
\end{figure}

\begin{definition}[Ruling polynomials for Legendrians in $M_k$]
The $\rho$-graded ruling polynomial for a marked Legendrian link in $M_k$, given by the Gompf standard form $[\bL,\bn]$, is defined as the $\rho$-graded ruling polynomial for the pair $(\bL,\bn)$:
\[
R^\rho_{[\bL,\bn]}\coloneqq R^\rho_{\bL,\bn}.
\]
\end{definition}

\begin{theorem}
The ruling polynomial $R^\rho_{[\bL,\bn]}$ is well-defined.
\end{theorem}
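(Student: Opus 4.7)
The plan is to verify that $R^\rho_{\bL,\bn}$ is invariant under each of the moves that relate two Gompf standard forms of the same Legendrian link in $M_k=\#^k(S^2\times S^1)$, as enumerated in \cite[Sec.~2]{Gompf1998}. These moves split into two groups: (a) Legendrian Reidemeister moves performed inside the standard region away from the co-core spheres $S^2_{\sL,i}$ and $S^2_{\sR,i}$, and (b) three-dimensional handle moves, namely cyclic rotation of the attaching endpoints at a paired sphere, insertion or removal of trivially paired strands through a handle, and Legendrian handle slide.

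Moves in group (a) are literally the Reidemeister and marked Reidemeister moves of Definition~\ref{Definition:equivalence of marked graphs} applied to the underlying marked bordered Legendrian graph $\bL$, with $\bn$ untouched. Invariance of $\bR^\rho_{\bL,\bn}$, and hence of $R^\rho_{\bL,\bn}$, in this case is immediate from Corollary~\ref{corollary:invariance for pair}.

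For the moves in group (b), the plan is to work with the closure $\hat{(\bL,\bn)}$ of Figure~\ref{figure:partialclosures} and exploit the identification of $\bR^\rho_{\bL,\bn}$ with the subset of $\bR^\rho_{\hat{(\bL,\bn)}}$ whose vertex matchings at $\sfv_i$ and $\sfv_i'$ agree for each $i$. Under this identification, a cyclic rotation of the $n_i$ strands at the $i$-th handle corresponds to a simultaneous cyclic relabeling of the half-edges at $\sfv_i$ and $\sfv_i'$; since this preserves the diagonal $\{(\phi,\phi)\}\subset\cP^\rho_{[n_i]}\times\cP^\rho_{[n_i]}$, it induces a weight-preserving bijection on $\bR^\rho_{\bL,\bn}$. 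Insertion or removal of a trivially paired strand through a handle can be realized as a $(\mathrm{II})$-type marked Reidemeister move applied inside the closure, so it is once again controlled by Corollary~\ref{corollary:invariance for pair}.

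The main obstacle is the Legendrian handle slide, the genuinely non-local move in group (b): it drags a strand of $\bL$ through the $i$-th handle and therefore changes $\bL$ globally. The plan is to realize it as a composition of Reidemeister moves in $\hat{(\bL,\bn)}$ that cross $\sfv_i$ and $\sfv_i'$ in matched fashion, combined with a graph-theoretic relabeling of the half-edges at those two vertices that respects the diagonal matching condition. Together with the compatibility of rulings with vertex-identifying operations, Corollary~\ref{corollary:rulings for bordered Legendrian graphs}, and Lemma~\ref{lemma:vertex bijections}, this yields a weight-preserving bijection between the sets $\bR^\rho_{\bL,\bn}$ associated to any two Gompf standard forms of $[\bL,\bn]$, and summing $\wt(S_\fr)=z^{n(S_\fr)}$ over both sides produces the desired equality of ruling polynomials.
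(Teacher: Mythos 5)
Your overall strategy --- reduce well-definedness to invariance under the moves of Gompf's Theorem~2.2, and dispatch the Legendrian Reidemeister moves away from the attaching balls via Corollary~\ref{corollary:invariance for pair} --- is the same as the paper's, and that first half is fine. The gap is in your list and treatment of the handle moves. The moves actually needed are the three Gompf moves (GI)--(GIII) of Figure~\ref{figure:gompf moves}: inserting/removing a pair of strands through a handle joined by a cusp, interchanging two adjacent strands at a handle, and sliding a strand past an attaching ball. A ``Legendrian handle slide'' is not among them --- handle slides alter the surgery presentation of the ambient $4$-manifold, whereas Theorem~2.2 of \cite{Gompf1998} concerns isotopy of links inside a fixed $M_k$ --- so you have introduced a move you do not need, and for the move you call the ``genuinely non-local'' one you offer only a vague plan (``a composition of Reidemeister moves \dots combined with a graph-theoretic relabeling'') rather than an argument.

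More seriously, your treatment of the strand-insertion move is incorrect: you claim it can be realized as a $(\mathrm{II})$-type marked Reidemeister move applied inside the closure, but this move changes $n_i$ to $n_i+2$ and hence the valence of the closure vertices $\sfv_i,\sfv_i'$, which no Reidemeister move can do; Corollary~\ref{corollary:invariance for pair} therefore does not apply to it. The paper instead handles (GI) by the explicit bijection $\cP_{[n]}\simeq\{\phi\in\cP_{[n+2]}\mid\{i,i+1\}\in\phi\}$, using that the inserted cusp forces the two new strands to be paired with each other in any normal ruling; it handles (GII) by the involution of $\cP_{[n]}$ exchanging the partners of $i$ and $i+1$; and it reduces (GIII) to two Reidemeister $(\mathrm{IV})$ moves. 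Your ``cyclic relabeling'' bijection is in the right spirit but is not matched to the actual moves, and without the correct move list and the (GI) bijection the proof does not close up.
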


\begin{proof}
It suffices to prove that the ruling polynomial $R^\rho_{[\bL,\bn]}$ is independent of the choice of a Gompf standard form.

Recall Theorem~2.2 from \cite{Gompf1998}
that two marked Legendrian links $[\bL,\bn]$ and $[\bL',\bn']$, with Maslov potentials, given in Gompf standard form in $M_k$ are isotopic if and only if the bordered Legendrian links $(\bL,\bn)$ and $(\bL',\bn')$ in $\R^3$ are related via Reidemeister moves away from the borders corresponding to co-cores --- i.e., Reimemeister moves in the boxed region of Figure~\ref{figure:gompf normal form} --- and {\em Gompf moves}, which are depicted in Figure~\ref{figure:gompf moves}.


The invariance under Reidemeister moves is already established in Corollary~\ref{corollary:invariance for pair}, and the invariance of ruling invariants under Gompf moves can be shown as follows.

For the Gompf move (GI), the invariance essentially comes from the bijection 
\[
\cP_{[n]} \simeq \cP_{[n],\{i,i+1\}}\coloneqq\{\phi\in\cP_{[n+2]}\mid \{i,i+1\}\in\phi\}
\]
between perfect matchings.
Namely, the Gompf move (GI) inserting the cusp at the $i$-th position forces the matching to have $\{i,i+1\}\in\phi$, and the above bijection induces the bijection between the sets of normal rulings.

For the Gompf move (GII), we use the bijection on $\cP_{[n]}$
\begin{align*}
\phi\in\cP_{[n]}&\mapsto \phi'\in\cP_{[n]},&
\phi'(j)&\coloneqq\begin{cases}
\phi(j) & j\neq i, i+1;\\
\phi(i+1) & j=i;\\
\phi(i) & j=i+1,
\end{cases}
\end{align*}
which directly induces the bijection between the sets of normal rulings again.

Finally, the Gompf move (GIII) is nothing but the composition of two Reidemeister (IV) moves and hence we have the invariance.
\end{proof}

\begin{figure}[ht]
\[
\setlength{\arraycolsep}{2pt}
\begin{array}{rcl}
\vcenter{\hbox{\includegraphics[scale=1]{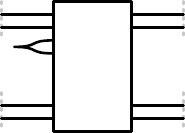}}}&\stackrel{\rm(GI)}\longleftrightarrow&
\vcenter{\hbox{\includegraphics[scale=1]{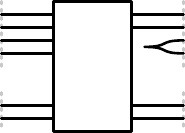}}}\\ \\
\vcenter{\hbox{\includegraphics[scale=1]{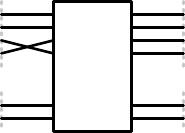}}}&\stackrel{\rm(GII)}\longleftrightarrow&
\vcenter{\hbox{\includegraphics[scale=1]{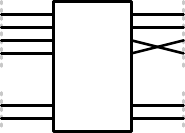}}}\\ \\
\vcenter{\hbox{\includegraphics[scale=1]{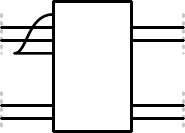}}}&\stackrel{\rm(GIII)}\longleftrightarrow&
\vcenter{\hbox{\includegraphics[scale=1]{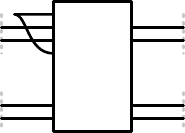}}}
\end{array}
\]
\caption{Gompf moves (GI), (GII) and (GIII).}
\label{figure:gompf moves}
\end{figure}

In the paper \cite{Leverson2017}, Leverson defined $\rho$-graded normal rulings for Legendrian links in $M_k$ by using Gompf standard forms. More precisely, if $[\bL,\bn]$ is a Gompf standard form of a Legendrian link in $M_k$ without markings, then the $\rho$-graded normal rulings for $[\bL,\bn]$ are those normal rulings of the bordered Legendrian $(\bL,\bn)$ whose matchings at the left and right ends coincide.

Notice that this definition is exactly the same as our definition for $\bR^\rho_{\bL,\bn}$ and therefore we have the following corollary.
\begin{corollary}\label{corollary:Leverson}
Let $[\bL=(\sL,\mu),\bn]$ be a Legendrian link in $M_k$ having no markings. Then the ruling polynomial $R^\rho_{[\bL,\bn]}$ coincides with the ruling invariant defined by Leverson in \cite[Definition~2.14]{Leverson2017}.
\end{corollary}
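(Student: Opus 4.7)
The plan is to show that, after unwinding definitions, Leverson's normal rulings for $[\bL,\bn]$ are literally the same weighted set as our $\bR^\rho_{\bL,\bn}$. First I would recall \cite[Definition~2.14]{Leverson2017}: given a Gompf standard form $[\bL,\bn]$ with no markings, a $\rho$-graded normal ruling is a $\rho$-graded normal ruling of the underlying bordered Legendrian link $(\bL,\bn)\subset\R^3$ whose left- and right-boundary matchings are equal, with the additional constraint that each such matching only pairs strands lying in the same co-core block $\{n_1+\cdots+n_{i-1}+1,\dots,n_1+\cdots+n_i\}$; this constraint reflects that under the identification $S^2_{\mathsf{L},i}\sim S^2_{\mathsf{R},i}$ a strand can only be matched to another strand passing through the same handle.

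Next I would observe that by construction $\cP_{[\bn]}=\prod_{i=1}^k \cP_{[n_i]}\subset \cP_{[n]}$ is precisely the set of block-respecting perfect matchings, so
\[
\bR^\rho_{\bL,\bn}=\coprod_{\phi\in\cP_{[\bn]}}\bR^\rho_{\bL}(\phi,\phi)
\]
is, element for element, Leverson's set: the index $\phi$ encodes the common boundary matching while the block condition is built into $\cP_{[\bn]}$. The weight functions match trivially, since both are $z$ raised to the power $\#(\fr)-\#(\text{eyes})-\tfrac12\#(\text{half-eyes})$, and hence summing over $\phi\in\cP_{[\bn]}$ yields $R^\rho_{[\bL,\bn]}=R^\rho_{\bL,\bn}$ as elements of $\Z[z^{\pm 1}]$.

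The argument is a definition-chase and there is no serious obstacle; the only mildly non-trivial point is the identification of $\cP_{[\bn]}$ with the block-respecting matchings in $\cP_{[n]}$, which is immediate from the factorization $\cP_{[\bn]}=\prod_i\cP_{[n_i]}$ but worth stating explicitly to connect to Leverson's topological description of rulings via co-cores.
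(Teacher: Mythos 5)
Your proposal is correct and follows essentially the same route as the paper, which likewise treats the corollary as a pure definition-chase: it observes in the paragraph preceding the statement that Leverson's rulings of $[\bL,\bn]$ are exactly the normal rulings of the bordered Legendrian $(\bL,\bn)$ whose left and right boundary matchings coincide, i.e.\ precisely the set $\bR^\rho_{\bL,\bn}=\coprod_{\phi\in\cP_{[\bn]}}\bR^\rho_{\bL}(\phi,\phi)$. Your explicit remark that $\cP_{[\bn]}=\prod_i\cP_{[n_i]}\subset\cP_{[n]}$ encodes the block-respecting (co-core) condition is a worthwhile clarification but not a departure from the paper's argument.
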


One way to go from connected Legendrian graphs in $\R^3$ to Legendrian links in $M_k$ is a {\em doubling construction} defined as follows:
For any marked {\em non-bordered} Legendrian graph $\bL=(\sL,\mu)$ with $k$ vertices and no markings, let us consider the {\em double} $D(\bL)=(D(\sL),D(\mu))$ of $\bL$ defined as the disjoint union of two copies, say $\bL_1\coloneqq(\sL_1,\mu_1)$ and $\bL_2\coloneqq(\sL_2,\mu_2)$ of $\bL$.
By applying Reidemeister moves, we may pull all vertices of $\bL_1$ to the left and pull all vertices of $\bL_2$ to the right so that the graph looks as follows:
\[
D(\bL)\coloneqq (D(\sL),D(\mu))=\vcenter{\hbox{
\begingroup%
  \makeatletter%
  \providecommand\color[2][]{%
    \errmessage{(Inkscape) Color is used for the text in Inkscape, but the package 'color.sty' is not loaded}%
    \renewcommand\color[2][]{}%
  }%
  \providecommand\transparent[1]{%
    \errmessage{(Inkscape) Transparency is used (non-zero) for the text in Inkscape, but the package 'transparent.sty' is not loaded}%
    \renewcommand\transparent[1]{}%
  }%
  \providecommand\rotatebox[2]{#2}%
  \ifx\svgwidth\undefined%
    \setlength{\unitlength}{104.68651235bp}%
    \ifx\svgscale\undefined%
      \relax%
    \else%
      \setlength{\unitlength}{\unitlength * \real{\svgscale}}%
    \fi%
  \else%
    \setlength{\unitlength}{\svgwidth}%
  \fi%
  \global\let\svgwidth\undefined%
  \global\let\svgscale\undefined%
  \makeatother%
  \begin{picture}(1,0.47284)%
    \put(0,0){\includegraphics[width=\unitlength,page=1]{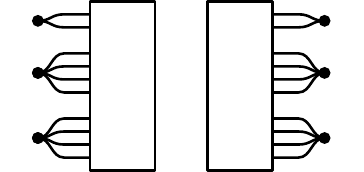}}%
    \put(-0.00289182,0.07522487){\color[rgb]{0,0,0}\makebox(0,0)[lb]{\smash{$\sfv_3$}}}%
    \put(-0.00289182,0.25433131){\color[rgb]{0,0,0}\makebox(0,0)[lb]{\smash{$\sfv_2$}}}%
    \put(-0.00289182,0.39761597){\color[rgb]{0,0,0}\makebox(0,0)[lb]{\smash{$\sfv_1$}}}%
    \put(0.28367804,0.21850952){\color[rgb]{0,0,0}\makebox(0,0)[lb]{\smash{$\sL_1$}}}%
    \put(0.60606913,0.21850952){\color[rgb]{0,0,0}\makebox(0,0)[lb]{\smash{$\sL_2$}}}%
    \put(0.93026494,0.39761584){\color[rgb]{0,0,0}\makebox(0,0)[lb]{\smash{$\sfv_1'$}}}%
    \put(0.9302948,0.25433138){\color[rgb]{0,0,0}\makebox(0,0)[lb]{\smash{$\sfv_2'$}}}%
    \put(0.9302202,0.07522532){\color[rgb]{0,0,0}\makebox(0,0)[lb]{\smash{$\sfv_3'$}}}%
  \end{picture}%
\endgroup%
}}=\bL_1\amalg \bL_2.
\]
Then it is obvious that $D(\bL)$ can be realized as the closure of a pair $(\tilde\bL,\bn)$ such that 
$\tilde\bL=\tilde\bL_1\cdot\tilde\bL_2$ for two bordered Legendrian graphs $\tilde\bL_1\coloneqq(\tilde\sL_1,\tilde\mu_1)$ and $\tilde\bL_2\coloneqq(\tilde\sL_2,\tilde\mu_2)$ of types $(n,0)$ and $(0,n)$, respectively, and $n=n_1+\cdots+n_k$ where $n_i$ is the valency of the vertex $\sfv_i$ in $\sL$:
\begin{align*}
D(\bL)&=\hat{(\tilde\bL,\bn)};&
(\tilde\bL,\bn)&=\vcenter{\hbox{
\begingroup%
  \makeatletter%
  \providecommand\color[2][]{%
    \errmessage{(Inkscape) Color is used for the text in Inkscape, but the package 'color.sty' is not loaded}%
    \renewcommand\color[2][]{}%
  }%
  \providecommand\transparent[1]{%
    \errmessage{(Inkscape) Transparency is used (non-zero) for the text in Inkscape, but the package 'transparent.sty' is not loaded}%
    \renewcommand\transparent[1]{}%
  }%
  \providecommand\rotatebox[2]{#2}%
  \ifx\svgwidth\undefined%
    \setlength{\unitlength}{71.51545383bp}%
    \ifx\svgscale\undefined%
      \relax%
    \else%
      \setlength{\unitlength}{\unitlength * \real{\svgscale}}%
    \fi%
  \else%
    \setlength{\unitlength}{\svgwidth}%
  \fi%
  \global\let\svgwidth\undefined%
  \global\let\svgscale\undefined%
  \makeatother%
  \begin{picture}(1,0.69215769)%
    \put(0,0){\includegraphics[width=\unitlength,page=1]{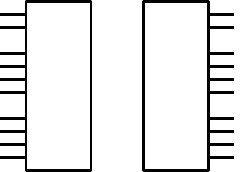}}%
    \put(0.15730865,0.31986134){\color[rgb]{0,0,0}\makebox(0,0)[lb]{\smash{$\tilde\sL_1$}}}%
    \put(0.62923462,0.31986056){\color[rgb]{0,0,0}\makebox(0,0)[lb]{\smash{$\tilde\sL_2$}}}%
  \end{picture}%
\endgroup%
}}=\tilde\bL_1\cdot \tilde\bL_2.
\end{align*}

By treating $(\tilde\bL,\bn)$ as a Gompf standard form, we obtain a Legendrian link $[\tilde\bL,\bn]$ in $M_k$. We denote this by $[D(\bL)]$. 

\begin{remark}
For singular Legendrian links, which are Legendrian 4-valent graphs, the double construction was considered in \cite[Section~6.2]{ABK2018}.
\end{remark}

We have the following further corollary:

\begin{corollary}\label{cor:double ruling}
Let $\bL=(\sL,\mu)$ be a Legendrian graph without markings. Then $\bL$ has a $\rho$-graded normal ruling if and only if the Legendrian link $[D(\bL)]$ in $M_k$ has a $\rho$-graded normal ruling in the sense of Leverson \cite{Leverson2017}.
\end{corollary}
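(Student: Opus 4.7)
The plan is to combine the Leverson identification from Corollary~\ref{corollary:Leverson} with the factorization structure of the double. By that corollary, $[D(\bL)]$ carries a Leverson $\rho$-graded normal ruling precisely when $\bR^\rho_{\tilde\bL,\bn}\neq\emptyset$, i.e., when some $\phi\in\cP_{[\bn]}$ satisfies $\bR^\rho_{\tilde\bL}(\phi,\phi)\neq\emptyset$. The goal is therefore to match such $\phi$'s with genuine rulings of $\bL$.

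The first step will exploit the shapes of $\tilde\bL_1$ and $\tilde\bL_2$. Since they are of types $(n,0)$ and $(0,n)$ respectively, they glue over the empty bordered Legendrian $\sfI_0$. Because $\cP_{[0]}^\rho$ is a singleton, the fiber product of Lemma~\ref{lemma:fiber product} collapses to the direct product
\[
\bR^\rho_{\tilde\bL}(\phi,\psi)\;\simeq\;\bR^\rho_{\tilde\bL_1}(\phi,\emptyset)\times\bR^\rho_{\tilde\bL_2}(\emptyset,\psi).
\]
Setting $\psi=\phi\in\cP_{[\bn]}$ shows that $\bR^\rho_{\tilde\bL,\bn}\neq\emptyset$ iff some $\phi\in\cP_{[\bn]}$ makes both $\bR^\rho_{\tilde\bL_1}(\phi,\emptyset)$ and $\bR^\rho_{\tilde\bL_2}(\emptyset,\phi)$ nonempty.

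Next I would identify one-sided $\tilde\bL_i$-rulings with rulings of $\bL$. By the construction of the double, the $\bn$-indexed closure of $\tilde\bL_1$, obtained by attaching $k$ left corollas of sizes $n_1,\dots,n_k$, reproduces the copy $\bL_1\subset D(\bL)$ of $\bL$, and similarly on the right for $\tilde\bL_2$. Applying Lemma~\ref{lemma:vertex bijections} and Lemma~\ref{lemma:fiber product} corolla by corolla should yield bijections
\[
\coprod_{\phi\in\cP_{[\bn]}}\bR^\rho_{\tilde\bL_1}(\phi,\emptyset)\;\simeq\;\bR^\rho_{\bL}\;\simeq\;\coprod_{\phi\in\cP_{[\bn]}}\bR^\rho_{\tilde\bL_2}(\emptyset,\phi),
\]
in which the boundary matching $\phi$ on $\tilde\bL_i$ records precisely the tuple $\Phi=\{\phi_\sfv\}$ of vertex matchings defining the corresponding resolution of $\bL$.

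Combining the two steps, a $\rho$-graded ruling of $\bL$ produces a tuple $\Phi$ of vertex matchings; using the same $\Phi$ on both copies of $\bL$ in $D(\bL)$ gives rulings of $\tilde\bL_1$ and $\tilde\bL_2$ with the same boundary matching $\phi\in\cP_{[\bn]}$ corresponding to $\Phi$, hence a Leverson ruling of $[D(\bL)]$. Conversely, a Leverson ruling splits into rulings of $\tilde\bL_1$ and $\tilde\bL_2$ sharing a common boundary matching $\phi$, either half of which closes to a ruling of $\bL$. I expect the main obstacle to be the explicit unwinding in the second step: one must verify that the perfect matching picked at a vertex $\sfv$ of $\bL$ (as data in $\Phi$) agrees with the boundary matching induced on the corresponding $n_\sfv$-sized block of left borders of $\tilde\bL_i$, a compatibility that is routine but somewhat tedious in view of the corolla-and-resolution conventions of Section~2.
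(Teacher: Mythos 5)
Your proposal is correct and follows essentially the same route as the paper: reduce via Corollary~\ref{corollary:Leverson} to nonemptiness of $\bR^\rho_{\tilde\bL,\bn}$, factor $\bR^\rho_{\tilde\bL}(\phi,\phi)\simeq\bR^\rho_{\tilde\bL_1}(\phi,\emptyset)\times\bR^\rho_{\tilde\bL_2}(\emptyset,\phi)$, use the correspondence between the two mirror halves, and identify $\coprod_{\phi}\bR^\rho_{\tilde\bL_1}(\phi,\emptyset)$ with $\bR^\rho_\bL$. Your explicit invocation of Lemma~\ref{lemma:fiber product} over $\sfI_0$ and of Lemma~\ref{lemma:vertex bijections} just makes precise what the paper labels as obvious.
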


\begin{proof}
Reidemeister moves do not affect whether $\bL$ has a normal ruling, and therefore $\bL$ has a $\rho$-graded normal ruling if and only if so does $D(\bL)=\bL_1\amalg\bL_2$. Moreover, it is obvious that 
\[
\bR^\rho_{\tilde\bL}=\bR^\rho_{D(\bL)}= \bR^\rho_{\bL_1}\times \bR^\rho_{\bL_2}=\bR^\rho_{\tilde\bL_1}\times\bR^\rho_{\tilde\bL_2}. 
\]
In addition, for each $\phi\in\cP_{[\bn]}$, we have
\[
\bR^\rho_{\tilde\bL}(\phi,\phi) = \bR^\rho_{\tilde\bL_1}(\phi,\emptyset) \times \bR^\rho_{\tilde\bL_2}(\emptyset,\phi).
\]

Since there is a one-to-one correspondence between $\bR^\rho_{\tilde\bL_1}(\phi,\emptyset)$ and $\bR^\rho_{\tilde\bL_2}(\emptyset,\phi)$, 
\[
\bR^\rho_{\tilde\bL,\bn}=\coprod_{\phi\in\cP_{[\bn]}} 
\bR^\rho_{\tilde\bL}(\phi,\phi) \neq\emptyset\Longleftrightarrow \coprod_{\phi\in\cP_{[\bn]}}\bR^\rho_{\tilde\bL_1}(\phi,\emptyset)\neq\emptyset.
\]

However, it is obvious that the right hand side is the same as $\bR^\rho_{\tilde\bL_1}=\bR^\rho_\bL$ and so we have
\[
\bR^\rho_{\tilde\bL,\bn}\neq\emptyset\Longleftrightarrow \bR^\rho_\bL\neq\emptyset.
\]
With this we are done since the left hand side is the same as the set of normal rulings of $[D(\bL)]$ in the sense of Leverson by Corollary~\ref{corollary:Leverson}.
\end{proof}

\section{Applications}

\subsection{Existence of rulings and augmentations}

In this section we briefly review the construction of the differential graded algebra (DGA for short) $\cA(\bL)$ for Legendrian graphs $\bL=(\sL,\mu)$ with Maslov potential, introduced by the first and second authors in \cite{AB2018}, and prove the equivalence between the existence of a normal ruling of $\bL$ and an augmentation of $\cA(\bL)$.
This result generalizes and unifies previous work for Legendrian links in $\R^3$ \cite{Sabloff2005, Leverson2016} and in $M_k=\#^k (S^2\times S^1)$ \cite{Leverson2017}.

\subsubsection{DGAs for Legendrian graphs}
To a Legendrian link $\bL=(\sL,\mu)$ with a Maslov potential, one can associate the Chekanov--Eliashberg DGA $\cA(\bL)$, which is a differential graded algebra generated by crossings in the Lagrangian projection (a.k.a.\ \emph{Reeb chords}) and whose differential comes from counting immersed polygons satisfying certain boundary conditions. 
Recently, the construction of the DGA invariant has been generalized to arbitrary Legendrian graphs \cite{AB2018}.

The main task was 
\begin{enumerate}
\item to handle algebraic behavior (or a DGA construction) at the vertices and 
\item to show the invariance under new (Lagrangian) Reidemeister moves which arise from the vertices.
\end{enumerate}
For the first issue, we assigned a DG-subalgebra $\cI_\sfv(\bL)$ for each vertex $\sfv\in \sV_\sL$, see Remark~\ref{rem:peripheral structure}.
For the second issue, it is needed to extend the notion of algebraic equivalence of DGAs from {\em stable-tame isomorphisms} to {\em generalized stable-tame isomorphisms}, see \cite{AB2018} for the precise definition. With these terminology, we have


\begin{theorem}\cite[Theorem~A,B]{AB2018}
Let $\bL=(\sL,\mu)$ be a Legendrian graph with Maslov potential. Then there is a pair $(\cA(\bL), \cP(\bL))$ consisting of a DGA $\cA(\bL)$ and a collection $\cP(\bL)$ of DG-subalgebras from vertices $\sV_\sL$.

Moreover, the pair $(\cA(\bL), \cP(\bL))$ up to generalized stable-tame isomorphisms is invariant under the Legendrian Reidemeister moves for $\bL=(\sL,\mu)$. In particular the induced homology $H_*(\cA(\bL),\partial)$ is an invariant.
\end{theorem}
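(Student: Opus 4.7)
The plan is to follow the approach established for Legendrian links and extend it to accommodate vertices. First, I would define $\cA(\bL)$ as the free unital associative algebra over a suitable coefficient ring (typically $\Z$ or $\Z_2$ with formal invertible variables attached to distinguished edges or markings) generated by the set of double points of the Lagrangian projection of $\sL$, which represent Reeb chords. The $\fR$-grading of each generator is read off from the Maslov potential by a formula analogous to \eqref{equation:grading}. Near each vertex $\sfv\in\sV_\sL$, I would introduce a local DG-subalgebra $\cI_\sfv(\bL)\in\cP(\bL)$ whose generators correspond to the ``short'' Reeb chords localized near $\sfv$, subject to relations that encode the cyclic order of half-edges at the vertex --- i.e., the data used in Section~2 to set up perfect matchings and resolutions.

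Second, I would define the differential $\partial$ by counting, with signs and appropriate edge-variable weights, immersed polygonal disks in the Lagrangian projection whose corners alternate between positive and negative Reeb chords and which satisfy the standard Maslov index condition. Away from vertices this is verbatim the Chekanov--Eliashberg differential. Near a vertex, disks may terminate along half-edges, and the resulting vertex-local contributions are absorbed into $\cI_\sfv(\bL)$ so that $\partial$ is well defined on $\cA(\bL)$. The crucial identity $\partial^2=0$ is then proved by the usual broken-disk argument: one classifies the ends of one-dimensional moduli spaces of immersed disks, and the extra codimension-one degenerations occurring at vertices must cancel against the prescribed relations inside $\cI_\sfv(\bL)$.

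Third, for invariance I would go through every move in the Legendrian Reidemeister list displayed in Section~2, including the vertex moves $(0_a)$--$(0_f)$ and $(\mathrm{IV})$, $(\mathrm{V})$. For the classical moves $(\mathrm{I})$--$(\mathrm{III})$, Chekanov's construction of an explicit stable-tame isomorphism adapts without change. For the vertex-related moves the underlying DG-subalgebras from $\cP(\bL)$ themselves change, so an honest stable-tame isomorphism is unavailable; this is precisely why the equivalence relation must be enlarged to \emph{generalized} stable-tame isomorphisms, and for each vertex move one constructs an explicit such morphism and checks that it carries the pair $(\cA(\bL),\cP(\bL))$ to that of the moved diagram. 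Invariance of $H_*(\cA(\bL),\partial)$ then follows formally, since any generalized stable-tame isomorphism is a quasi-isomorphism.

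The main obstacle is entirely vertex-local: arranging the subalgebras $\cI_\sfv(\bL)$ so that the disk-counting differential is simultaneously well defined and squares to zero, and then constructing the generalized stable-tame isomorphisms needed for the vertex Reidemeister moves. These two verifications are what prevent the classical Chekanov--Eliashberg argument from applying directly, and their full resolution occupies the bulk of \cite{AB2018}, from which the present theorem is quoted.
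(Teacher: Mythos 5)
This statement is quoted verbatim from \cite{AB2018} (Theorems~A and~B there); the present paper offers no proof of it, only a citation together with a summary of the construction, so there is no internal argument to measure your proposal against. Judged against that summary, your outline captures the broad strategy of the cited work --- generators from double points of the Lagrangian projection, a disk-counting differential, vertex-local DG-subalgebras $\cI_\sfv(\bL)$, and an enlargement of stable-tame isomorphism to ``generalized'' stable-tame isomorphism to absorb the vertex moves --- and that is essentially the right shape.

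Where your sketch diverges from the construction the paper actually records is in the vertex-local ingredient, and the divergence matters. You propose that $\cI_\sfv(\bL)$ be generated by short Reeb chords near $\sfv$ ``subject to relations that encode the cyclic order of half-edges.'' In the construction recounted here there are no relations: $\cA(\bL)$ is \emph{free} on the crossings of Ng's resolution together with an \emph{infinite} family of formal vertex generators $\sfv_{i,\ell}$, $i\in\Zmod{\val(\sfv)}$, $\ell\ge 1$, graded via the spiral curves $\gamma(\sfv,i,j)$, and the cyclic structure at the vertex is encoded not in relations but in the explicit differential
\[
\partial \sfv_{i,j} = \delta_{j,\val(\sfv)}+\sum_{j_1+j_2=j} (-1)^{|\sfv_{i,j_1}|-1}\sfv_{i,j_1}\sfv_{i+j_1, j_2}.
\]
Freeness is not optional: tame isomorphisms and stabilizations are defined for free DGAs, so imposing relations at the vertices would break the very equivalence relation your invariance argument relies on. Correspondingly, $\partial^2=0$ on the vertex generators is a direct algebraic computation from the formula above, not a broken-disk cancellation against relations as you describe; the analytic broken-disk argument is only needed for the crossing generators, where disks may now have non-convex vertex corners. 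With that correction your plan aligns with the cited construction, but the full verification is exactly the content of \cite{AB2018} and cannot be compressed into the sketch given.
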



To define the DGA $\cA(\bL)$, we use the Lagrangian projection $\pi_{xy}:\R^3 \to \R^2_{xy}$ of $\sL$. There is a {\em combinatorial} way to obtain a Lagrangian projection of $\sL$ from a front diagram, due to Ng \cite{Ng2003}, called {\em resolution}.

\begin{definition}\cite[Definition~2.1]{Ng2003}\label{definition:Ng's resolution}
Let $\sL$ be a regular front projection of a Legendrian graph. Then the {\em resolution} $\res(\sL)$ is a diagram in the $xy$-plane obtained  by performing on $\sL$ the operations
\begin{align*}
\Lcusp&\mapsto \leftarc~,&
\Rcusp&\mapsto\rightkink~,&
\crossing&\mapsto \crossingpositive~,
\end{align*}
along with, for each vertex $v$ of type $(\ell,r)$, the replacement
\[
\vcenter{\hbox{}}\mapsto\vcenter{\hbox{
\begingroup%
  \makeatletter%
  \providecommand\color[2][]{%
    \errmessage{(Inkscape) Color is used for the text in Inkscape, but the package 'color.sty' is not loaded}%
    \renewcommand\color[2][]{}%
  }%
  \providecommand\transparent[1]{%
    \errmessage{(Inkscape) Transparency is used (non-zero) for the text in Inkscape, but the package 'transparent.sty' is not loaded}%
    \renewcommand\transparent[1]{}%
  }%
  \providecommand\rotatebox[2]{#2}%
  \ifx\svgwidth\undefined%
    \setlength{\unitlength}{126.28139512bp}%
    \ifx\svgscale\undefined%
      \relax%
    \else%
      \setlength{\unitlength}{\unitlength * \real{\svgscale}}%
    \fi%
  \else%
    \setlength{\unitlength}{\svgwidth}%
  \fi%
  \global\let\svgwidth\undefined%
  \global\let\svgscale\undefined%
  \makeatother%
  \begin{picture}(1,0.38604261)%
    \put(0,0){\includegraphics[width=\unitlength,page=1]{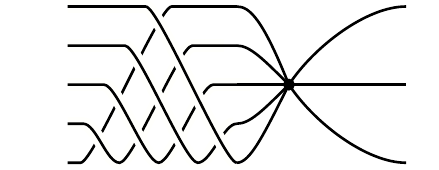}}%
    \put(0.64443659,0.24647418){\color[rgb]{0,0,0}\makebox(0,0)[lb]{\smash{$\sfv$}}}%
    \put(0.93494112,0.3697179){\color[rgb]{0,0,0}\makebox(0,0)[lb]{\smash{\tiny$1$}}}%
    \put(0.93794087,0.19048533){\color[rgb]{0,0,0}\makebox(0,0)[lb]{\smash{\tiny$2$}}}%
    \put(0.93944074,0.01050265){\color[rgb]{0,0,0}\makebox(0,0)[lb]{\smash{\tiny$r$}}}%
    \put(0.01374111,0.37121743){\color[rgb]{0,0,0}\makebox(0,0)[lb]{\smash{\tiny$r+1$}}}%
    \put(0.01374111,0.27672647){\color[rgb]{0,0,0}\makebox(0,0)[lb]{\smash{\tiny$r+2$}}}%
    \put(0.01374111,0.00900312){\color[rgb]{0,0,0}\makebox(0,0)[lb]{\smash{\tiny$r+\ell$}}}%
  \end{picture}%
\endgroup%
}}~.
\]
\end{definition}

The unital algebra $\cA(\bL)$ over $\Z$ is generated by the union of the set $\sC(\res(\sL))$ of crossings of the resolution $\res(\sL)$ and an infinite set of generators for each vertex, namely
\[
\cA(\bL)\coloneqq\Z\langle \sC(\res(\sL))\amalg \tilde\sV(\sL)\rangle,
\]
where
\begin{align}\label{eqn:vertex_generators}
\tilde\sV(\sL)\coloneqq\{\sfv_{i,\ell}\mid \sfv\in\sV(\sL),\,i\in\Zmod{\val(\sfv)},\, \ell\ge 1\}.
\end{align}
We assign so called Reeb and orientation signs to the four quadrants at each crossing $\sfc$ of $\res(\sL)$ as depicted in Figure~\ref{figure:sign}.
From now on, shaded regions indicate quadrants whose orientation sign, depending on the grading of the crossing, may be negative.

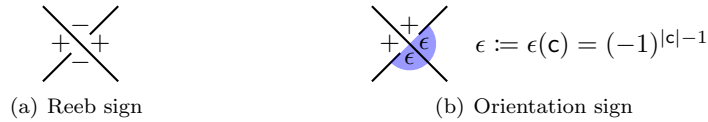
\begin{figure}[ht]
\subfigure[Reeb sign]{\makebox[0.4\textwidth]{
$\begin{tikzpicture}[scale=0.5,baseline=-.5ex]
\draw[thick] (-1,-1) -- (1,1);
\draw[line width=10pt,white] (-1,1) -- (1,-1);
\draw[thick] (-1,1) -- (1,-1);
\draw(0,0) node[above] {$-$} node[below] {$-$} node[left] {$+$} node[right] {$+$};
\end{tikzpicture}$}}
\subfigure[Orientation sign]{\makebox[0.4\textwidth]{
$\begin{tikzpicture}[scale=0.5,baseline=-.5ex]
\draw[thick] (-1,-1) -- (1,1);
\draw[line width=10pt,white] (-1,1) -- (1,-1);
\fill[blue,opacity=0.4](-0.5,-0.5)--(0.5,0.5) arc (45:-135:0.707);
\draw[thick] (-1,1) -- (1,-1);
\draw(0,0) node[above] {$+$} node[below] {$\epsilon$} node[left] {$+$} node[right] {$\epsilon$};
\draw(1.5,0) node[right] {$\epsilon\coloneqq \epsilon(\sfc)=(-1)^{|\sfc|-1}$};
\end{tikzpicture}$}}
\caption{Reeb signs and orientation signs.}
\label{figure:sign}
\end{figure}

Here the grading $|\sfc|$ of the crossing $\sfc\in\sC(\res(\sL))$ is as given in equation \eqref{equation:grading}.
For a generator $\sfv_{i,j}$ belonging to a vertex $\sfv$ of type $(\ell,r)$, the grading is defined as
\[
|\sfv_{i,j}|\coloneqq \mu(i)-\mu(i+j) + (n-1)\in\fR,
\]
where $n$ is the number of intersections between the vertical line passing through $\sfv$ and the spiral curve $\gamma(\sfv,i,j)$ that starts from the $i$-th half-edge, rotates clockwise around $\sfv$, and passes exactly $j$ minimal sectors.

\begin{figure}[ht]
\begin{align*}
\gamma(\sfv,1,3)&=
\begin{tikzpicture}[baseline=-.5ex]
\begin{scope}[scale=0.8]
\draw[dashed,gray] (0,-1)--(0,1);
\draw[fill] (0,0) circle (3pt);
\foreach \t in {1,...,6} {
	\draw[thick] (0,0) -- (120-\t * 60:1);
	\draw(120-\t*60:1.3) node {$\t$};
}
\draw[red,thick,-latex'] plot[variable=\t,domain=60:-120,samples=50] ({.5 * cos( \t )},{ 0.5 * sin( \t )});
\end{scope}
\end{tikzpicture}&
\gamma(\sfv,1,7)&=
\begin{tikzpicture}[baseline=-.5ex]
\begin{scope}[scale=0.8]
\draw[dashed,gray] (0,-1)--(0,1);
\draw[fill] (0,0) circle (3pt);
\foreach \t in {1,...,6} {
	\draw[thick] (0,0) -- (120-\t * 60:1);
	\draw(120-\t*60:1.3) node {$\t$};
}
\draw[red,thick,-latex'] plot[variable=\t,domain=60:-360,samples=50] ({ (.5 - (\t-60) / 1800 ) * cos( \t )},{ (.5 - (\t-60) / 1800) * sin( \t )});
\end{scope}
\end{tikzpicture}\\
|\sfv_{1,3}|&=\mu(1)-\mu(4)+(1-1),&
|\sfv_{1,7}|&=\mu(1)-\mu(2)+(2-1),&
\end{align*}
\caption{Examples of piral curves $\gamma(\sfv,i,j)$.}
\label{figure:spiral curve}
\end{figure}
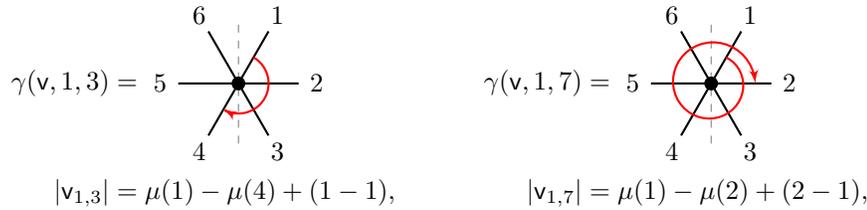

Let $\Pi_t$ be a $(t+1)$-gon and let us denote its boundary and the set of its vertices by $\partial\Pi_t$ and $V\Pi_t=\{\bx_0,\dots,\bx_t\}$, respectively. 
The differential for each crossing $\sfc$ is given by counting immersed polygons 
\[
f\colon (\Pi_t,\partial \Pi_t, V\Pi_t)\to(\R^2,\sL,\sC_\sL\cup\sV_\sL)
\]
which pass only one Reeb-positive quadrant at $f(\bx_0)=\sfc$ and several Reeb-negative quadrants and {\em vertex corners}.
When $f$ maps a vertex of $\Pi_t$ to a crossing then a neighborhood of the vertex is mapped to a single quadrant (positive for $\bx_0$, negative otherwise) at the crossing. There is no such local convexity requirement for vertices that are mapped to (projections of) vertices, cf.\ Figure \ref{figure:vertex corner}.

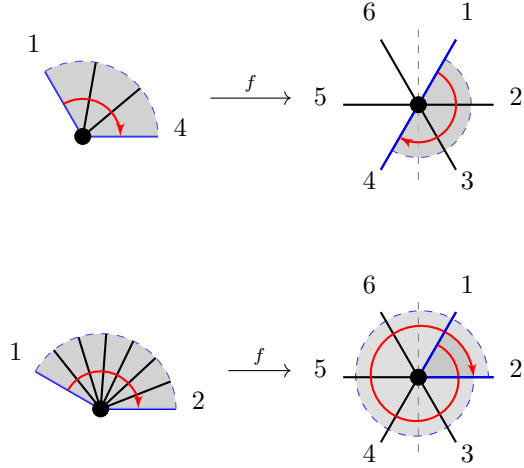
\begin{figure}[ht]
\[
\begin{tikzcd}
\begin{tikzpicture}[baseline=1pc]
\draw[thick, blue] (120:1)--(0,0)--(0:1);
\draw[dashed, blue] (120:1) arc (120:0:1);
\fill[lightgray, opacity=0.7] (0,0) -- (120:1) arc (120:0:1) -- cycle;
\draw[fill,black] (0,0) circle (3pt);
\draw(120:1.3) node {$1$};
\draw[thick] (0,0)--(80:1);
\draw[thick] (0,0)--(40:1);
\draw(0:1.3) node {$4$};
\draw[thick,red,-latex'] (120:0.5) arc (120:0:0.5);
\end{tikzpicture}\arrow[r,"f"]&
\begin{tikzpicture}
\draw[dashed,gray] (0,-1)--(0,1);
\draw[dashed, blue] plot[variable=\t,domain=60:-120,samples=50] ({ (.7 ) * cos( \t )},{ (.7) * sin( \t )});
\fill[lightgray,opacity=0.7] (0,0) -- (60:0.7) plot[variable=\t,domain=60:-120,samples=50] ({ (.7 ) * cos( \t )},{ (.7 ) * sin( \t )}) -- (0,0);
\foreach \t in {1,...,6} {
	\draw[thick] (0,0) -- (120-\t * 60:1);
	\draw(120-\t*60:1.3) node {$\t$};
}
\draw[red,thick,-latex'] plot[variable=\t,domain=60:-120,samples=50] ({.5 * cos( \t )},{ 0.5 * sin( \t )});
\draw[blue, thick] (0,0) -- (60:1) (0,0) -- (240:1);
\draw[fill] (0,0) circle (3pt);
\end{tikzpicture}\\
\begin{tikzpicture}[baseline=1pc]
\draw[thick, blue] (150:1)--(0,0)--(0:1);
\draw[dashed, blue] (150:1) arc (150:0:1);
\fill[lightgray, opacity=0.7] (0,0) -- (150:1) arc (150:0:1) -- cycle;
\draw[fill,black] (0,0) circle (3pt);
\draw(150:1.3) node {$1$};
\foreach \t in {1,...,6} {
	\draw[thick] (0,0) -- (150-\t*150/7:1);
}
\draw(0:1.3) node {$2$};
\draw[thick,red,-latex'] (150:0.5) arc (150:0:0.5);
\end{tikzpicture}\arrow[r,"f"] &
\begin{tikzpicture}
\draw[dashed, blue] plot[variable=\t,domain=60:-360,samples=50] ({ (.7 - (\t-60) / 1800 ) * cos( \t )},{ (.7 - (\t-60) / 1800) * sin( \t )});
\fill[lightgray,opacity=0.7] (0,0) -- (60:0.7) plot[variable=\t,domain=60:0,samples=50] ({ (.7 - (\t-60) / 1800 ) * cos( \t )},{ (.7 - (\t-60) / 1800) * sin( \t )}) -- (0,0);
\fill[lightgray,opacity=0.5] (0,0) -- (60:0.7) plot[variable=\t,domain=0:-360,samples=50] ({ (.7 - (\t-60) / 1800 ) * cos( \t )},{ (.7 - (\t-60) / 1800) * sin( \t )}) -- (0,0);
\draw[dashed,gray] (0,-1)--(0,1);
\foreach \t in {1,...,6} {
	\draw[thick] (0,0) -- (120-\t * 60:1);
	\draw(120-\t*60:1.3) node {$\t$};
}
\draw[red,thick,-latex'] plot[variable=\t,domain=60:-360,samples=50] ({ (.5 - (\t-60) / 1800 ) * cos( \t )},{ (.5 - (\t-60) / 1800) * sin( \t )});
\draw[blue, thick] (0,0) -- (60:1) (0,0) -- (1,0);
\draw[fill] (0,0) circle (3pt);
\end{tikzpicture}
\end{tikzcd}
\]
\caption{Vertex corners of immersed polygons}
\label{figure:vertex corner}
\end{figure}

\begin{definition}[Signs of polygons]
For an immersed polygon $f$ with domain $\Pi_t$ having the vertex $\bx\in V\Pi_t$, the sign $\sgn(f,\bx)$ is defined as follows:
\begin{itemize}
\item If $f(\bx)$ is a crossing $\sfc$, then $\sgn(f,\bx)$ is the orientation sign of the quadrant locally covered by the image of $f$, cf.\ Figure~\ref{figure:sign}.
\item If $f(\bx)$ is a vertex, then $\sgn(f,\bx)$ is defined to be $1$.
\end{itemize}
\end{definition}

\begin{definition}[Canonical label]\label{def:Canonical label}
Let 
\[
f \colon (\Pi_t,\partial \Pi_t, V\Pi_t)\to(\R^2,\sL,\sC_\sL\cup\sV_\sL)
\] 
be an orientation preserving immersed polygon as above.
Let us label the nearby edges $\bh_{\bv_+}$, $\bh_{\bv_-}$ on a neighborhood $\bU_\bv$ of $\bv\in V\Pi_t$ as follows:
\begin{align*}
\vcenter{\hbox{
\begingroup%
  \makeatletter%
  \providecommand\color[2][]{%
    \errmessage{(Inkscape) Color is used for the text in Inkscape, but the package 'color.sty' is not loaded}%
    \renewcommand\color[2][]{}%
  }%
  \providecommand\transparent[1]{%
    \errmessage{(Inkscape) Transparency is used (non-zero) for the text in Inkscape, but the package 'transparent.sty' is not loaded}%
    \renewcommand\transparent[1]{}%
  }%
  \providecommand\rotatebox[2]{#2}%
  \ifx\svgwidth\undefined%
    \setlength{\unitlength}{64.80938242bp}%
    \ifx\svgscale\undefined%
      \relax%
    \else%
      \setlength{\unitlength}{\unitlength * \real{\svgscale}}%
    \fi%
  \else%
    \setlength{\unitlength}{\svgwidth}%
  \fi%
  \global\let\svgwidth\undefined%
  \global\let\svgscale\undefined%
  \makeatother%
  \begin{picture}(1,0.80661243)%
    \put(0,0){\includegraphics[width=\unitlength,page=1]{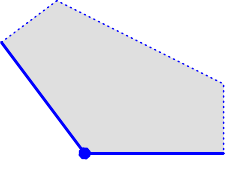}}%
    \put(0.28378866,0.00831651){\color[rgb]{0,0,0}\makebox(0,0)[lb]{\smash{$\bv$}}}%
    \put(0.59591606,0.00727795){\color[rgb]{0,0,0}\makebox(0,0)[lb]{\smash{$\bh_{\bv_+}$}}}%
    \put(0.0325906,0.25153242){\color[rgb]{0,0,0}\makebox(0,0)[lb]{\smash{$\bh_{\bv_-}$}}}%
    \put(0.40,0.40){\color[rgb]{0,0,0}\makebox(0,0)[lb]{\smash{$\bU_{\bv}$}}}
  \end{picture}%
\endgroup%
}}
\end{align*}

We define a function $\tilde f:\vPi\to \sG_\sL$, called the {\em canonical label} of $f$, as
\[
\tilde f(\bv)\coloneqq\begin{cases}
\sgn(f,\bv) \sfc & f(\bv)=\sfc\in\sC_\sL;\\
\sfv_{i,\ell} & f(\bv)=\sfv\in\sV_\sL,
\end{cases}
\]
where 
$f(\bh_{\bv_-}\cap \bU_\bv)\subset \sfh_{\sfv,i}$, and $f(\bU_\bv)$ is mapped to $\ell\coloneqq\ell_f(\bv)$ sectors near $\bv$.

\end{definition}

\begin{definition}[Grading of polygons]
The {\em grading} of the immersed $(t+1)$-gon $f$ is defined by
\[
|f|\coloneqq |\tilde f(\bx_0)|-\sum_{i=1}^t|\tilde f(\bx_i)|.
\]
\end{definition}


\begin{definition}[Differential]\label{def:differential}
For each $\sfc\in\sC_\sL$, let $\cM_t(\sfc)$ be the set of all immersed $(t+1)$-gons $f$ with $\tilde f(\bx_0)=\pm\sfc$ whose degree is $1$:
\[
\cM_t(\sfc)\coloneqq\{f\colon(\Pi_t,\partial\Pi_t,V\Pi_t)\to(\R^2,\sL,\sC_\sL\cup\sV_\sL)\mid \tilde f(\bx_0)=\pm\sfc,\,|f|=1\}.
\]
Then, the differential $\partial\sfc$ is defined as
\[
\partial\sfc\coloneqq \sum_{t\ge0}\sum_{f\in\cM_t(\sfc)} \sgn(f,\bx_0)\tilde f(\bx_1)\cdots \tilde f(\bx_t).
\]

On the other hand, for $\sfv_{i,j}$, the differential is given by the following formula.
\[
\partial \sfv_{i,j} \coloneqq
\delta_{j,\val(\sfv)}+\sum_{\substack{j_1+j_2=j}} (-1)^{|\sfv_{i,j_1}|-1}\sfv_{i,j_1}\sfv_{i+j_1, j_2}.
\]
\end{definition}

\begin{remark}\label{rem:peripheral structure}
Notice that $\partial\sfv_{i,j}$ involves only $\sfv_{i',j'}$'s and therefore, we have the DG-subalgebra $\cI_\sfv(\bL)$ for each vertex $\sfv$ generated by $\sfv_{i,j}$'s. Hence we have a DGA morphism $\bp_\sfv\colon \cI_\sfv(\bL)\to\cA(\bL)$, especially a DG-subalgebra.
\end{remark}

Furthermore, one can obtain Ekholm-Ng's DGA invariants for Legendrian links with Maslov potentials contained in $M_k$ defined in \cite{EN2015}.
Recall that a Legendrian link in $M_k$ can be represented by a pair $(\bL,\bn)$ of a bordered Legendrian graph $\bL=(\sL,\mu)$ of type $(n,n)$ without markings and a sequence $\bn=(n_1,\dots, n_k)$ of natural numbers with $n_1+\cdots+n_k=n$.
As before, we denote the set of vertices of the closure $\hat{(\bL,\bn)}$ by $\{\sfv_i, \sfv'_i\mid 1\le i\le k\}$.

\begin{theorem}\cite[Theorem~7.9]{AB2018}
Let $[\bL,\bn]$ be a Legendrian link with a Maslov potential in $M_k$ given as a Gompf standard form and let $\bar\bL\coloneqq\hat{(\bL,\bn)}$. The Ekholm-Ng's DGA $\cA^{\mathsf{EN}}([\bL,\bn])$ can be defined as the homotopy coequalizer
\[
\begin{tikzcd}\displaystyle
\coprod_{i=1}^k\cI_{\sfv_i}(\bar\bL)\arrow[r,"\coprod\bp_i",shift left=0.5ex]\arrow[r,"\coprod\bp_i'"',shift right=0.5ex] & \cA(\bar\bL)
\arrow[r]& \cA^{\mathsf{EN}}([\bL,\bn]),
\end{tikzcd}
\]
where $\bp_i$ and $\bp_i'$ are peripheral structures 
\begin{align*}
\bp_i &\colon  \cI_{\sfv_i}(\bar\bL) \stackrel{\bp_{\sfv_i}}\longrightarrow \cA(\bar\bL),&
\bp_i' &\colon  \cI_{\sfv_i} (\bar\bL) \simeq \cI_{\sfv_i'} \left(\bar\bL\right) \stackrel{\bp_{\sfv'_i}}\longrightarrow \cA(\bar\bL).
\end{align*}
\end{theorem}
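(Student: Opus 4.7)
The plan is to identify the two DGAs as free graded algebras on the same generators and then match their differentials handle-by-handle, using the fact that the Ekholm--Ng handle contribution is combinatorially indistinguishable from a peripheral subalgebra $\cI_\sfv$.

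First I would unpack the right-hand side. Since $\cA(\bar\bL)$ is freely generated by the crossings of $\res(\bar\sL)$ together with the vertex generators $\sfv_{i,\ell}$ at every vertex of $\bar\sL$, the peripheral subalgebras $\cI_{\sfv_j}$ and $\cI_{\sfv'_j}$ are each free on a disjoint copy of the same indexing set, with the canonical isomorphism between them tautological. In the category of semifree DGAs, the homotopy coequalizer of the two peripheral maps $\coprod\bp_j$ and $\coprod\bp'_j$ admits the familiar double-mapping-cylinder model: one adjoins to $\cA(\bar\bL)$, for each triple $(j;i,\ell)$ with $1\le j\le k$, a new generator $\sfx_{j;i,\ell}$ of degree $|\sfv_{i,\ell}|+1$ whose differential is built inductively (on $\ell$) so as to realize $\bp_j=\bp'_j$ up to chain homotopy on every generator of $\cI_{\sfv_j}$.

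Next I would recall the Ekholm--Ng construction of $\cA^{\mathsf{EN}}([\bL,\bn])$ from \cite{EN2015}. Under the Gompf standard form, its generators are (i) the crossings of $\res(\sL)$, which agree with the crossings of $\res(\bar\sL)$ away from the closure arcs, and (ii) for each 1-handle $j\in\{1,\dots,k\}$, a family of handle chords carrying exactly the same bigrading and internal differential as the vertex generators $\sfv_{i,\ell}$ attached to $\sfv_j$, as dictated by Definition~\ref{def:differential}. Under the bijection $\sfx_{j;i,\ell}\leftrightarrow$ handle chord, the two algebras agree as free graded algebras and their differentials already match on the handle generators; only the \emph{mixed} terms (i.e., differentials of crossings involving handle chords) remain to be compared.

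The main obstacle is this last comparison. Given a disk contributing to the Ekholm--Ng differential of a crossing $\sfc$, I would cut it along each 1-handle it threads, yielding an immersed polygon in $\R^2$ with convex corners at crossings and additional corners at $\sfv_j$ or $\sfv'_j$, hence an element of $\cM_t(\sfc)$ in the sense of Definition~\ref{def:differential}. Via the canonical label of Definition~\ref{def:Canonical label}, each handle corner contributes a monomial that matches the handle weight assigned by Ekholm--Ng, while the Reeb and orientation sign conventions of Figure~\ref{figure:sign} together with the grading rule $|\sfv_{i,j}|=\mu(i)-\mu(i+j)+(\val(\sfv)-1)$ are arranged so that signs and degrees survive the cut-and-paste. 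The two peripheral structures $\bp_j$ and $\bp'_j$ recognize the two sides of the $j$-th handle, and the cylinder generators $\sfx_{j;i,\ell}$ are precisely the datum needed to splice the resulting monomials back into a single Ekholm--Ng handle chord. Assembling this bijection handle-by-handle and invoking the universal property of the homotopy coequalizer completes the identification.
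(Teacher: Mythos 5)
First, a point of order: the paper does not prove this statement. It is imported verbatim as \cite[Theorem~7.9]{AB2018}, so there is no in-paper argument to compare yours against; any proof has to be reconstructed from the Ekholm--Ng construction of \cite{EN2015} and the vertex DGA of \cite{AB2018}.

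Judged on its own, your proposal has a genuine gap at the step where you claim the two sides ``agree as free graded algebras.'' The double-mapping-cylinder model you describe is $\cA(\bar\bL)$ with the homotopy generators $\sfx_{j;i,\ell}$ adjoined; in particular it still contains \emph{both} families of vertex generators attached to $\sfv_j$ and to $\sfv_j'$ for every handle, as well as all the extra crossings created by the closure arcs of $\hat{(\bL,\bn)}$ (the analogues of the $\sfa_{i,j}$ in Figure~\ref{figure:right closure resolution}). The Ekholm--Ng DGA has a single family of handle generators per $1$-handle and no closure crossings, so the proposed bijection between the cylinder generators and the handle chords cannot be an isomorphism of free algebras: the generating sets differ by infinitely many elements on each handle. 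To close this you would need either an explicit (generalized) stable-tame isomorphism cancelling the redundant generators in pairs, or an honest verification of the universal property, which requires first exhibiting a cocone, i.e.\ a DGA morphism $\cA(\bar\bL)\to\cA^{\mathsf{EN}}([\bL,\bn])$ coequalizing $\coprod\bp_i$ and $\coprod\bp_i'$ up to a specified homotopy; that in turn forces you to actually carry out the cut-and-paste analysis of disks threading the handles that you only sketch, since it is there that one sees how the EN differential of a crossing decomposes into the two peripheral images. A secondary unverified assertion is that the internal EN handle differential coincides with the formula for $\partial\sfv_{i,j}$ in Definition~\ref{def:differential}; this is plausible by design of $\cI_\sfv$, but it is a computation, not a tautology, and your argument leans on it.
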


In particular, for any non-bordered Legendrian graph $\bL=(\sL,\mu)$ with $k$ vertices $\{\sfv_1,\dots,\sfv_k\}$ without markings, the Ekholm-Ng's DGA $\cA^{\mathsf{EN}}([D(\bL)])$ for $[D(\bL)]\subset M_k$ can be defined as the quotient of $\cA(D(\bL))$ of the DGA for the double $D(\bL)=(D(\sL), D(\mu))$ in $\R^3$.
Indeed, we have the following (homotopy) pushout diagram consisting of injective homomorphisms between DGAs:
\[
\begin{tikzcd}
\displaystyle\coprod_{i=1}^k \cI_{\sfv_i}(\bL) \arrow[r,hook,"\coprod_{i=1}^k\bp_i"]\arrow[d,hook,"\coprod_{i=1}^k\bp'_i"] & \cA(\bL) \arrow[d,hook]\\
\cA(\bL)\arrow[r,hook] & \cA(D(\bL))\big/\left( \coprod_{i=1}^k\bp_i\sim\coprod_{i=1}^k\bp_i'\right)\arrow[r,equal,"\sim"] &\cA^{\mathsf{EN}}([D(\bL)])
\end{tikzcd}
\]

\subsubsection{DGAs for bordered Legendrian graphs}
Legendrian links in a bordered manifold and their associated DGAs were first considered in \cite{Sivek2011} via combinatorial methods and later in \cite{HS2014} with geometric interpretation.

Now we define a DGA for a bordered Legendrian graph $\bL=(\sL,\mu)$ of type $(\ell,r)$ with a Maslov potential.
Let $\hat\bL^{\mathsf{left}}$ be the concatenation 
\[
\hat\bL^{\mathsf{left}}\coloneqq\mathbf{0}_\ell(\iota_L^*(\mu))\cdot\bL
\]
called the {\em left closure} of $\bL$. Then we define the {\em Ng's resolution} $\res(\bL)$ for a bordered Legendrian graph $\bL$ as the resolution of the right-bordered Legendrian $\hat\bL^\mathsf{left}$, which can be regarded as a subdiagram of the resolution of the closure $\hat\bL$. See Figure~\ref{figure:left closure}.
\[
\res(\bL)\coloneqq\res(\hat\bL^\mathsf{left})\subset \res(\hat\bL).
\]

\begin{figure}[ht]
\[
\begin{tikzcd}
\vcenter{\hbox{\includegraphics{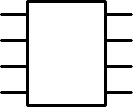}}}\arrow[r,"\hat{\cdot}^\mathsf{left}"] &
\vcenter{\hbox{\includegraphics{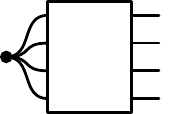}}}\arrow[r, "\res"] &
\res(\sL)=\vcenter{\hbox{\includegraphics{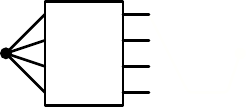}}}\\
\vcenter{\hbox{\includegraphics{L_front.pdf}}}\arrow[r,"\hat{\cdot}"] &
\vcenter{\hbox{\includegraphics{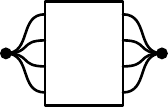}}}\arrow[r, "\res"] &
\res(\hat{\sL})=\vcenter{\hbox{\includegraphics{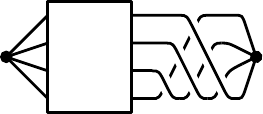}}}
\end{tikzcd}
\]
\caption{The closures and their resolutions}
\label{figure:left closure}
\end{figure}

\begin{definition}[DGAs for bordered Legendrian graphs]\label{definition:DGA for bordered Legendrian graphs}
Let $\bL=(\sL,\mu)$ be a bordered Legendrian graph with a Maslov potential. Then $\cA(\bL)$ is defined by the DGA construction for the Lagrangian projection $\res(\bL)$.
\end{definition}

Then, it is easy to see that $\cA(\bL)$ is generated by not only crossings and vertex generators in $\bL$, but also infinitely many generators $\{\mathsf{0}_{i,j}\mid i\in\Zmod{\ell},j>0\}$, where $\mathsf{0}$ is the vertex coming from the left-closure.

The right border of $\res(\bL)$ gives us an additional datum, a DGA morphism $\bp_\infty\colon \cI_\infty(\bL)\to\cA(\bL)$ of degree 0 defined as follows:
The DGA $\cI_\infty(\bL)$ is the DGA of the trivial bordered Legendrian $(\sfI_r, \iota_R^*(\mu))$, whose generators will be denoted by $\infty_{i,j}$'s
\[
\cI_\infty(\bL)\coloneqq\Z\langle
\infty_{i,j}\mid i\in\Zmod{r}, j>0
\rangle.
\]
The image of $\infty_{i,j}$ under $\bp_\infty$ is defined in a similar way to the differential $\partial$ so that $\bp_\infty$ counts immersed once-punctured $t$-gons contained in the neighborhood of $\res(\bL)$ as depicted in Figure~\ref{figure:disk at infinity}.
We regard that the spiral curves corresponding to $\infty_{i,j}$ are lying on the boundary of this neighborhood and each once-punctured immersed polygon converges to some $\infty_{i,j}$ near the puncture.

\begin{figure}[ht]
\subfigure[A Legendrian tangle]{\makebox[0.45\textwidth]{$
\vcenter{\hbox{\includegraphics{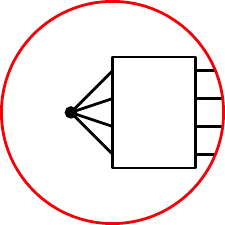}}}$}}
\subfigure[An immersed once-punctured polygon]{\makebox[0.45\textwidth]{$
\vcenter{\hbox{\includegraphics{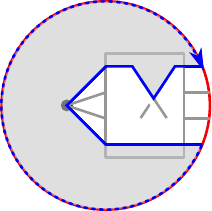}}}
$}}
\caption{Polygons in a Legendrian tangle}
\label{figure:disk at infinity}
\end{figure}

Then this disk counting defines a DGA morphism. See \cite[Section~6]{AB2018} for detail.
\begin{lemma}\cite[Lemma~6.10]{AB2018}
The map $\bp_\infty$ is a DGA morphism.
\end{lemma}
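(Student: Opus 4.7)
The plan is to verify that $\bp_\infty$ (i) has degree zero and (ii) satisfies $\partial \circ \bp_\infty = \bp_\infty \circ \partial$. Degree preservation is bookkeeping: for an immersed once-punctured $(t+1)$-gon $f$ converging to $\infty_{i,j}$ at the puncture, the Reeb and vertex corner signs at the $t$ ordinary corners combine with the local spiral data at the puncture so that $|\tilde f(\bx_1)\cdots\tilde f(\bx_t)|=|\infty_{i,j}|$, by exactly the same grading formula used for the vertex generators $\sfv_{i,j}$. Hence $\bp_\infty$ preserves grading.

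For the chain map identity I would run the standard one-dimensional moduli argument that is the analogue of the proof of $\partial^2=0$. Let $\mathcal{N}_t(\infty_{i,j})$ denote the moduli space of immersed once-punctured $(t+1)$-gons whose puncture converges to the $(i,j)$-spiral sector at the right border and which are of index one higher than the rigid punctured polygons counted by $\bp_\infty(\infty_{i,j})$. After generic perturbation of $\res(\bL)$ (which does not change the algebraic count), this is a smooth $1$-manifold whose compactification $\overline{\mathcal{N}_t(\infty_{i,j})}$ is a $1$-manifold with boundary, and the boundary points are exactly the two-level configurations in which some sub-polygon pinches off.

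Two types of pinching can occur. In the first type, the sub-polygon is an ordinary rigid (unpunctured) polygon contributing to $\partial\tilde f(\bx_s)$ at some corner $\bx_s$ of an underlying once-punctured polygon; summing these contributions over all $s$ and all $t\ge 0$ produces $\partial\bigl(\bp_\infty(\infty_{i,j})\bigr)$. In the second type, the pinching occurs near the right border: the once-punctured polygon breaks into two once-punctured polygons, each converging to its own $\infty$-generator $\infty_{i,j_1}$ and $\infty_{i+j_1,j_2}$ with $j_1+j_2=j$; these contributions assemble into $\bp_\infty$ applied to the bilinear terms $(-1)^{|\infty_{i,j_1}|-1}\infty_{i,j_1}\infty_{i+j_1,j_2}$ of $\partial\infty_{i,j}$. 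The remaining $\delta_{j,r}$ summand of $\partial\infty_{i,j}$ arises from the \emph{trivial} once-punctured disk whose boundary wraps once around the right border (the $\gamma(\infty,i,r)$-spiral), realized as a single capping boundary component of $\overline{\mathcal{N}_t(\infty_{i,j})}$ rather than a two-level configuration. Since the signed count of boundary points of a compact $1$-manifold is zero, the two contributions cancel and yield $\partial\circ\bp_\infty=\bp_\infty\circ\partial$.

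The main obstacle is verifying signs: one must check that the orientation signs of Figure~\ref{figure:sign} at ordinary crossing corners, the convention $\sgn(f,\bv)=1$ at vertex-type corners (Definition~\ref{def:Canonical label}), and the spiral winding data assembling into the grading together produce exactly the coefficients $(-1)^{|\infty_{i,j_1}|-1}$ appearing in the vertex differential of Definition~\ref{def:differential}. This reduces to a local spiral-sector computation in a collar of the right border, essentially identical to the one that makes $\partial^2=0$ on each vertex subalgebra $\cI_\sfv(\bL)$; performing that local analysis carefully, as in \cite{AB2018}, completes the proof.
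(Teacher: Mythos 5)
The paper does not actually prove this lemma: it is imported verbatim from \cite{AB2018} (Lemma~6.10), the text only remarking that ``this disk counting defines a DGA morphism'' and pointing to Section~6 of that reference. So there is no in-paper argument to compare yours against. Your sketch is nonetheless the right general strategy, and it is the one the cited proof carries out: degree preservation by the same grading bookkeeping used for the vertex generators $\sfv_{i,j}$, and the chain-map identity by pairing off the two ends of one-parameter families of once-punctured polygons, with interior breakings assembling into $\partial\bigl(\bp_\infty(\infty_{i,j})\bigr)$ and breakings at the right border assembling into the quadratic terms of $\partial\infty_{i,j}$.

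Three points are thinner than they would need to be for this to stand as a proof. First, the language of ``generic perturbation'' and smooth $1$-manifolds is borrowed from the analytic setting; in this combinatorial model the correct mechanism is an explicit gluing/splitting analysis showing that the broken configurations occur in matched pairs, not a transversality argument. Second, the constant term $\delta_{j,r}$ is not naturally a ``capping boundary component'' of a compactified $1$-manifold: it comes from identifying the specific rigid once-punctured disk that sweeps the entire collar of the border once when the spiral closes up after a full turn (consistent with $|\infty_{i,r}|=1$, so that a degree-$0$ constant can appear in $\partial\bp_\infty(\infty_{i,r})$), and naming that configuration is precisely the step you gesture at rather than perform. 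Third, you defer the sign verification to ``the local analysis in \cite{AB2018}'' --- the very source the lemma is quoted from --- which is acceptable for a cited statement but means your proposal does not itself contain the only genuinely nontrivial parts of the argument.
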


\begin{theorem}\label{theorem:equivalence of augmentations}
Let $\bL$ be a bordered Legendrian graph. Then $\cA(\bL)$ has a $\rho$-graded augmentation if and only if so does $\cA(\hat\bL)$.
\end{theorem}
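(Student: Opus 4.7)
The plan is to compare $\cA(\bL)$ with $\cA(\hat\bL)$ directly and to transfer $\rho$-graded augmentations using the peripheral morphism $\bp_\infty\colon\cI_\infty(\bL)\to\cA(\bL)$. The closure $\hat\bL=\mathbf{0}_\ell\cdot\bL\cdot\pmb{\infty}_r$ adds to $\bL$ only the right-closure vertex $\pmb{\infty}_r$, since the left closure is already built into $\cA(\bL)$ by Definition~\ref{definition:DGA for bordered Legendrian graphs}. As an algebra $\cA(\hat\bL)$ is therefore generated by the generators of $\cA(\bL)$ together with the new vertex generators $\{\pmb{\infty}_{i,j}\}_{i\in\Zmod{r},\,j\ge 1}$. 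The first step is to describe the differential $\hat\partial$ concretely. On $\pmb{\infty}_{i,j}$ it is the standard vertex differential, matching the DG-subalgebra $\cI_{\pmb{\infty}_r}$, which is canonically isomorphic to $\cI_\infty(\bL)$ via the identification of spiral labels. On an old generator $x$ it has the form $\hat\partial(x)=\partial_{\bL}(x)+\Psi(x)$, where $\Psi(x)$ collects immersed polygons in $\res(\hat\bL)$ with at least one vertex corner at $\pmb{\infty}_r$. An excision argument near $\pmb{\infty}_r$ identifies each such polygon with a polygon in $\res(\bL)$ having positive corner at $x$ and boundary punctures at $\infty$-spirals in the region where $\bp_\infty$ is defined, so that $\Psi$ is built from the same combinatorial data as $\bp_\infty$ with the formal labels $\infty_{i,\ell}$ replaced by the generators $\pmb{\infty}_{i,\ell}$.

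For the direction $\cA(\bL)\Rightarrow\cA(\hat\bL)$, given an augmentation $\epsilon$ of $\cA(\bL)$ I would extend it by setting $\hat\epsilon|_{\cA(\bL)}=\epsilon$ and
\[
\hat\epsilon(\pmb{\infty}_{i,j})\coloneqq \epsilon\bigl(\bp_\infty(\infty_{i,j})\bigr).
\]
The vertex conditions $\hat\epsilon(\hat\partial\pmb{\infty}_{i,j})=0$ are automatic since $\bp_\infty$ is a DGA morphism, so $\epsilon\circ\bp_\infty$ is an augmentation of $\cI_\infty(\bL)\cong\cI_{\pmb{\infty}_r}$. For an old generator $x$ one has $\hat\epsilon(\hat\partial x)=\epsilon(\partial_{\bL}x)+\hat\epsilon(\Psi(x))$; the first summand vanishes by hypothesis, and the second vanishes by the $\Psi\leftrightarrow\bp_\infty$ identification combined with the chosen values of $\hat\epsilon$ on the $\pmb{\infty}_{i,\ell}$.

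For the direction $\cA(\hat\bL)\Rightarrow\cA(\bL)$, I would take $\epsilon:=\hat\epsilon|_{\cA(\bL)}$ and verify $\epsilon\partial_{\bL}=0$ using $\partial_{\bL}(x)=\hat\partial(x)-\Psi(x)$. Since $\hat\epsilon(\hat\partial x)=0$ and the $\Psi\leftrightarrow\bp_\infty$ identification together with the vertex-augmentation conditions $\hat\epsilon(\hat\partial\pmb{\infty}_{i,j})=0$ force $\hat\epsilon(\Psi(x))=0$, one concludes $\epsilon(\partial_{\bL}x)=0$. In the cleanest formulation, this whole correspondence is encoded in a pushout diagram of DGAs
\[
\begin{tikzcd}
\cI_\infty(\bL) \arrow[r,"\bp_\infty"]\arrow[d,"\cong"'] & \cA(\bL) \arrow[d]\\
\cI_{\pmb{\infty}_r} \arrow[r] & \cA(\hat\bL),
\end{tikzcd}
\]
which puts augmentations of $\cA(\hat\bL)$ in bijection with pairs of augmentations of $\cA(\bL)$ and $\cI_{\pmb{\infty}_r}$ that become equal after pullback through $\bp_\infty$ and the canonical isomorphism.

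I expect the main obstacle to be the precise polygon-gluing identification $\Psi\leftrightarrow\bp_\infty$ that underlies the pushout description. The required analysis near $\pmb{\infty}_r$ involves matching signs, Maslov $\fR$-gradings, and $\rho$-gradedness of immersed polygons with vertex corners at $\pmb{\infty}_r$ against the punctured polygons of $\res(\bL)$ counted by the peripheral structure, and verifying that the vertex-differential combinatorics on $\pmb{\infty}_{i,j}$ reproduce exactly the boundary relations imposed by $\bp_\infty$. Once this matching is carefully established, both halves of the equivalence follow from the DGA-morphism property of $\bp_\infty$ and the universal property of the pushout.
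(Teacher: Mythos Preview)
Your proposal has a genuine gap: you have miscounted the new generators of $\cA(\hat\bL)$. Passing from $\res(\bL)$ to $\res(\hat\bL)$ does not only add the vertex generators $\infty_{i,j}$; Ng's resolution of the right-closure vertex $\pmb{\infty}_r$ (see Definition~\ref{definition:Ng's resolution} and Figure~\ref{figure:right closure resolution}) produces $\binom{r}{2}$ new \emph{crossings} $\sfa_{i,j}$, $1\le i<j\le r$. Consequently your description of $\hat\partial$ is off in two ways. First, on an old generator $x$ the differential does \emph{not} change: $\hat\partial x=\partial_\bL x$, which is precisely why the inclusion $\Phi\colon\cA(\bL)\to\cA(\hat\bL)$ is a DGA morphism and the implication $\aug(\cA(\hat\bL))\neq\emptyset\Rightarrow\aug(\cA(\bL))\neq\emptyset$ is immediate by pullback along $\Phi$. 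Your correction term $\Psi(x)$ is zero, and the argument you sketch for the backward direction is unnecessary. Second, the interesting new differentials are those of the $\sfa_{i,j}$: in the paper's computation
\[
\hat\partial\sfa_{i,j}=\bp_\infty(\infty_{i,j-i})+(-1)^{|\sfa_{i,j}|-1}\infty_{i,j-i}+\text{(terms involving some }\sfa_{k,l}\text{)},
\]
so that $\bp_\infty(\infty_{i,j-i})$ and $\infty_{i,j-i}$ are not equal in $\cA(\hat\bL)$ but differ by the boundary of $\sfa_{i,j}$ modulo lower terms. This is why your strict pushout square is wrong as stated: $\cA(\hat\bL)$ is a \emph{mapping cylinder} (homotopy pushout) for $\bp_\infty$, with the $\sfa_{i,j}$ playing the role of the homotopy, not the ordinary pushout identifying $\bp_\infty(\infty_{i,j})$ with $\infty_{i,j}$.

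For the forward direction the paper therefore does something your proposal cannot: it sets $\hat\epsilon(\sfa_{i,j})\coloneqq 0$ and $\hat\epsilon(\infty_{i,j})\coloneqq(-1)^{|\infty_{i,j}|-1}\epsilon(\bp_\infty(\infty_{i,j}))$, and then checks directly that $\hat\epsilon(\hat\partial\sfa_{i,j})=0$ (the two leading terms cancel because $|\sfa_{i,j}|=|\infty_{i,j-i}|+1$, and the remaining terms vanish since each contains an $\sfa$-factor) and that $\hat\epsilon(\hat\partial\infty_{i,j})=0$ (using that $\bp_\infty$ is a DGA morphism). Your formula $\hat\epsilon(\pmb{\infty}_{i,j})=\epsilon(\bp_\infty(\infty_{i,j}))$ misses the sign and, more importantly, without the $\sfa_{i,j}$ in the picture there is no equation left to verify that actually uses $\bp_\infty$. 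Once you insert the missing crossing generators and recompute $\hat\partial$, your outline can be repaired along exactly these lines.
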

\begin{proof}
In order to avoid the ambiguity, we denote the differential for $\cA(\hat\bL)$ by $\hat\partial$.

As seen in Figure~\ref{figure:left closure}, all crossings and vertices for $\res(\bL)$ are already contained in $\res(\hat\bL)$, which indeed induces the canonical DGA morphism
\[
\Phi\colon \cA(\bL)\to\cA(\hat\bL).
\]
It follows directly that $\aug(\cA(\bL),\Z)\neq\emptyset$ if $\aug(\cA(\hat\bL),\Z)\neq\emptyset$ by pre-composition of $\Phi$.

Suppose that we have an augmentation $\epsilon\colon \cA(\bL)\to\Z$. Then it suffices to extend $\epsilon$ to $\hat\epsilon\colon \cA(\widehat \bL)\to\Z$ by assigning values for the additional generators---both crossing $\{\sfa_{i,j}\mid 1\le i<j\le r\}$ and vertex generators $\{\infty_{i,j}\mid 1\le i\le r, j>0\}$ --- that come from the resolution part of the right closure part $\infty_r$. See Figure~\ref{figure:right closure resolution}.

As to the differential of $\sfa_{i,j}$, two types of disks --- indicated as $(A_k)$ and $(B_k)$ in Figure~\ref{figure:right closure resolution} --- contribute as follows:
\begin{align*}
{(A_k)}&:(-1)^{|\sfa_{i,j}|-1}\sfa_{i,k}\infty_{k,j-k};\\
{(B_k)}&:\bp_\infty(\infty_{i,k-i})\sfa_{k, j}.
\end{align*}
Therefore we have
\begin{align*}
\hat\partial\sfa_{i,j}&=
\bp_\infty(\infty_{i,j-i}) + (-1)^{|\sfa_{i,j}|-1}\infty_{i,j-i}\\
&\mathrel{\hphantom{=}}+\sum_{k=i+1}^{j-1} (-1)^{|\sfa_{i,j}|-1}\sfa_{i,k}\infty_{k,j-k} + \bp_\infty(\infty_{i,k-i})\sfa_{k, j}.
\end{align*}
Note that $|\sfa_{i,j}| = |\infty_{i,j-i}|+1$. 

On the other hand, the differential for $\infty_{i,j}$ is the same as before
\begin{align*}
\hat\partial\infty_{i,j}&=\delta_{j,r}+\sum_{j_1+j_2=j}(-1)^{|\infty_{i,j_1}|-1}\infty_{i,j_1}\infty_{i+j_1,j_2}\\
&=(-1)^{|\infty_{i,j}|-1}\delta_{j,r}+\sum_{j_1+j_2=j}(-1)^{|\infty_{i,j_1}|-1}\infty_{i,j_1}\infty_{i+j_1,j_2}.
\end{align*}
The last equality holds since $\delta_{j,r}=1$ if and only if $j=r$ and $|\infty_{i,r}|=1$.

We now extend $\epsilon$ to $\hat\epsilon$ by assigning values on $\sfa_{i,j}$ and $\infty_{i,j}$ as follows:
\begin{align*}
\hat\epsilon(\sfa_{i,j})&\coloneqq 0,&
\hat\epsilon(\infty_{i,j})&\coloneqq (-1)^{|\infty_{i,j}|-1}\epsilon(\bp_\infty(\infty_{i,j})).
\end{align*}

To show that $\hat\epsilon$ is an augmentation for $\cA(\hat\bL)$, it suffices to show that $\hat\epsilon$ commutes with differential. That is,
\[
\hat\epsilon\circ\hat\partial=0.
\]

From the direct computation, we have
\begin{align*}
(\hat\epsilon\circ\hat\partial)(\infty_{i,j})&=\delta_{j,r}+\sum_{j_1+j_2=j}(-1)^{|\infty_{i,j_1}|-1}\hat\epsilon(\infty_{i,j_1}\infty_{i+j_1,j_2})\\
&=(-1)^{|\infty_{i,j}|-1}\epsilon(\bp_\infty(\hat \partial\infty_{i,j}))\\
&=(-1)^{|\infty_{i,j}|-1}\epsilon(\partial\bp_\infty(\infty_{i,j}))=0.
\end{align*}
Here, we used that for $j_1+j_2=j$,
\[
(-1)^{|\infty_{i,j}|-1} = (-1)^{|\infty_{i,j_1}|-1}(-1)^{|\infty_{i+j_1,j_2}|-1}.
\]
Finally, for $\sfa_{i,j}$ we have
\begin{align*}
(\hat\epsilon\circ\hat\partial)(\sfa_{i,j})&=\epsilon(\bp_\infty(\infty_{i,j-i}))+(-1)^{|\sfa_{i,j}|-1}\hat\epsilon(\infty_{i,j-i})\\
&=\epsilon(\bp_\infty(\infty_{i,j-i}))+(-1)^{|\sfa_{i,j}|-1+|\infty_{i,j-i}|-1}\epsilon(\bp_\infty(\infty_{i,j-i}))\\
&=0
\end{align*}
since $|\sfa_{i,j}|=|\infty_{i,j-i}|+1$. Therefore $\hat\epsilon$ is a DGA morphism and we are done.
\end{proof}

\begin{figure}[ht]
\[
\begin{tikzpicture}[baseline=-.5ex]
\begin{scope}[xshift=-2cm]
\draw[thick](-1,1.5) node[left] {$1$} to[out=0,in=180] (1,0);
\draw[thick](-1,.5) to[out=0,in=180] (1,0);
\draw[thick](-1,-.5) to[out=0,in=180] (1,0);
\draw[thick](-1,-1.5) node[left] {$r$} to[out=0,in=180] (1,0);
\draw[fill] (1,0) circle (2pt) node[above] {$\infty$};
\end{scope}
\draw[->](-0.5,0)--(0.5,0);
\draw(0,0) node[above] {$\res$};
\begin{scope}[xshift=2cm]
\draw[thick,rounded corners](-1,-1.5) node[left] {$r$} -- (-0.5,-1.5) -- (1,1.5) -- (2.5,1.5) -- (4,0);
\draw[line width=8pt,white,rounded corners](-1,-.5) -- (-0.5,-.5) -- (0,-1.5) -- (0.5,-1.5) -- (1.5,.5) -- (2.5,0.5);
\draw[thick,rounded corners](-1,-.5) -- (-0.5,-.5) -- (0,-1.5) -- (0.5,-1.5) -- (1.5,.5) -- (2.5,0.5) -- (4,0);
\draw[line width=8pt,white,rounded corners](-1,.5) -- (0,.5) -- (1,-1.5) -- (1.5,-1.5) -- (2, -0.5) -- (2.5,-0.5);
\draw[thick,rounded corners](-1,.5) -- (0,.5) -- (1,-1.5) -- (1.5,-1.5) -- (2, -0.5) -- (2.5,-0.5) -- (4,0);
\draw[line width=8pt,white,rounded corners](-1,1.5) -- (0.5,1.5) -- (2,-1.5) -- (2.5,-1.5);
\draw[thick,rounded corners](-1,1.5) node[left] {$1$} -- (0.5,1.5) -- (2,-1.5) -- (2.5,-1.5) -- (4,0);
\draw[thick,dotted](-1,-1.2) -- (-1,-0.95);
\fill[blue,opacity=0.4](0.65,.8)--(0.85,1.2) arc (63.43:-116.57:0.224);
\fill[blue,opacity=0.4](1.65,-1.2)--(1.85,-.8) arc (63.43:-116.57:0.224);
\fill[blue,opacity=0.4](1.15,-.2)--(1.35,.2) arc (63.43:-116.57:0.224);
\fill[blue,opacity=0.4](-0.35,-1.2)--(-0.15,-.8) arc (63.43:-116.57:0.224);
\fill[blue,opacity=0.4](0.15,-.2)--(0.35,0.2) arc (63.43:-116.57:0.224);
\fill[blue,opacity=0.4](0.65,-1.2)--(0.85,-0.8) arc (63.43:-116.57:0.224);
\fill[lightgray,opacity=0.5,rounded corners] (-1,-0.5) --(-0.5,-0.5) -- (-0.25, -1) -- (0.25,0)-- (0,0.5) -- (-1,0.5);
\fill[lightgray,opacity=0.5,rounded corners] (4,0) -- (2.5,-0.5) -- (2,-0.5)-- (1.75,-1) -- (1.25, 0)  -- (1.5,0.5) -- (2.5,0.5) -- (4,0);
\draw (2.5,0) node {$(A_k)$};
\draw (-0.5,0) node {$(B_k)$};
\draw[fill] (4,0) circle (2pt) node[right] {$\infty$};
\draw (1.75,-1) node[right] {$\sfa_{1,2}$};
\draw (1.25,0) node[right] {$\sfa_{1,3}$};
\draw (0.75,1) node[right] {$\sfa_{1,r}$};
\draw (0.75,-1) node[right] {$\sfa_{2,3}$};
\draw (0.25,0) node[right] {$\sfa_{2,r}$};
\draw (-0.25,-1) node[right] {$\sfa_{3,r}$};
\draw[red,thick,-latex'] plot[variable=\t,domain=225:-225,samples=50] ({ (.5 - (\t-60) / 1800 ) * cos( \t ) + 4},{ (.5 - (\t-60) / 1800) * sin( \t )});
\draw (4,0.5) node[above] {$\infty_{i,j}$};
\end{scope}
\end{tikzpicture}
\]
\caption{The resolution of $\infty_r$ and generators}
\label{figure:right closure resolution}
\end{figure}
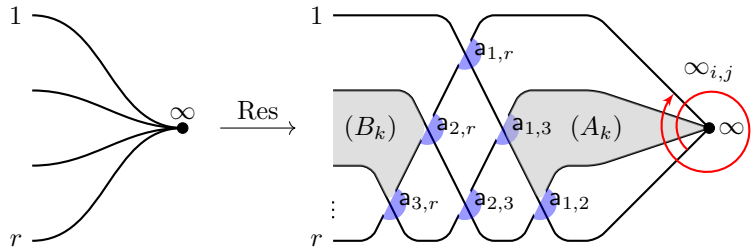

\subsubsection{Augmentations and rulings}
\begin{definition}[Augmentation]
An {\em augmentation} of a DGA $\cA$ over $\Z$ is a DGA morphism $\epsilon\colon \cA\to(\Z,|\cdot|\equiv0,\partial\equiv0)$. We say that $\epsilon$ is {\em $\rho$-graded} if $\fR=\Zmod{\rho}$.

We denote the set of all $\rho$-graded augmentations for $\cA$ over $\Z$ by $\aug^\rho(\cA,\Z)$.
\end{definition}

As mentioned earlier in this section, the existence of augmentation is related with the existence of normal rulings as follows:
\begin{theorem}\cite{Sabloff2005, Leverson2016, Leverson2017}\label{theorem:equivalence of existence for links}
For a Legendrian link $\bL=(\sL,\mu)$ in $\R^3$ or $M_k$, the $\rho$-graded normal ruling exists if and only if the $\rho$-graded augmentation exists for $\cA(\bL)$ or $\cA^{\mathsf{EN}}(\bL)$, respectively.
\end{theorem}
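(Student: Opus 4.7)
The plan is to handle the two settings separately, in each case adapting the original arguments for Legendrian links.

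For Legendrian links in $\R^3$, I would establish a bijective correspondence between $\rho$-graded normal rulings and $\rho$-graded augmentations at the level of sets (which is stronger than the mere existence claim). Given a $\rho$-graded normal ruling $(\fr, S_\fr)$, define $\epsilon_\fr \colon \cA(\bL) \to \Z$ by sending $\sfc \in \fr$ to $1$ and every other Reeb chord to $0$. To check $\epsilon_\fr \circ \partial = 0$, analyze the immersed disks contributing to $\partial\sfc$ and show that those disks whose corners all lie in $\fr \cup \{\text{augmented crossings}\}$ come in sign-cancelling pairs. The pairing is controlled by the three possible local configurations at ruling crossings, that is, by the non-interlacing condition; this is essentially the content of Fuchs--Ishkhanov \cite{FI2004} and Sabloff \cite{Sabloff2005}. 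Conversely, from a $\rho$-graded augmentation $\epsilon$ one reads off a set of switched crossings $\fr_\epsilon \coloneqq \{\sfc : \epsilon(\sfc) = 1\}$ and builds a decomposition of the $0$-resolution into eyes, half-eyes, and parallels by sweeping the front diagram left to right. Leverson's extension \cite{Leverson2016} upgrades this construction from $\Z/2$ to $\Z$ coefficients.

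For Legendrian links $[\bL, \bn]$ in $M_k$, I would reduce to the $\R^3$ case via the homotopy coequalizer presentation
\[
\begin{tikzcd}
\coprod_{i=1}^k \cI_{\sfv_i}(\bar\bL) \arrow[r,"\coprod \bp_i",shift left=0.5ex]\arrow[r,"\coprod \bp_i'"',shift right=0.5ex] & \cA(\bar\bL) \arrow[r] & \cA^{\mathsf{EN}}([\bL, \bn])
\end{tikzcd}
\]
from Theorem~7.9 of \cite{AB2018}. By universal property, $\rho$-graded augmentations of $\cA^{\mathsf{EN}}([\bL, \bn])$ correspond bijectively to $\rho$-graded augmentations $\epsilon$ of $\cA(\bar\bL)$ satisfying $\epsilon \circ \bp_i = \epsilon \circ \bp_i'$ for every $i$. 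Applying the $\R^3$ case to $\bar\bL$ produces a ruling for each such $\epsilon$, and the peripheral equality translates under the correspondence to the condition that the ruling's matchings at $\sfv_i$ and $\sfv_i'$ coincide. By Corollary~\ref{corollary:Leverson}, this is precisely Leverson's definition of a $\rho$-graded ruling of $[\bL, \bn]$, completing the proof.

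The main obstacle is this last translation step, where one must verify that the peripheral equality $\epsilon \circ \bp_i = \epsilon \circ \bp_i'$ on the DGA side matches, under the ruling-augmentation bijection, the combinatorial equality of matchings on the two sides of each closure vertex. Concretely, one must understand how the augmentation values $\epsilon(\sfv_{i,j})$ on vertex generators encode the perfect matching $\phi_{\sfv_i}$ assigned by the ruling, and verify that this encoding is compatible with the isomorphism $\cI_{\sfv_i}(\bar\bL) \simeq \cI_{\sfv_i'}(\bar\bL)$. This is where the combinatorics of the resolutions introduced in Section~2 must be reconciled with the algebraic structure of $\cI_\sfv$, and is essentially the substance of the main argument in \cite{Leverson2017} rephrased in the present graph framework.
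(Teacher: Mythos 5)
You should first note that the paper does not actually prove Theorem~\ref{theorem:equivalence of existence for links}: it is imported as a black box from \cite{Sabloff2005}, \cite{Leverson2016} and \cite{Leverson2017}, so there is no internal proof to compare against. Your proposal is therefore an attempt to reconstruct the external arguments, and as a road map it points at the right sources; but two steps, taken literally, do not work.

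First, the claimed set-level bijection between $\rho$-graded normal rulings and $\rho$-graded augmentations is false in general: the relationship established by Ng--Sabloff and Leverson is a count (the number of augmentations is governed by the ruling polynomial after a suitable normalization), not a bijection. More seriously, the recipe ``$\epsilon_\fr(\sfc)=1$ for $\sfc\in\fr$ and $0$ otherwise'' is typically not an augmentation of $\cA(\bL)$, and the disks contributing to $\partial\sfc$ do not cancel in sign pairs for this naive assignment. The actual constructions of \cite{FI2004,Sabloff2005,Leverson2016} pass through dipped (splashed) diagrams: the augmentation is supported on auxiliary crossings in the dips, determined by which strands the ruling pairs over each $x$-coordinate, and is then transported back to $\cA(\bL)$ through the stable tame isomorphisms relating the dipped and undipped DGAs. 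The non-interlacing condition enters in verifying $\epsilon\circ\partial=0$ on the dipped diagram, not on the original one. Without this intermediate device your verification step has no mechanism to succeed.

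Second, in the $M_k$ case your reduction via the coequalizer presentation of $\cA^{\mathsf{EN}}([\bL,\bn])$ is a reasonable alternative to Leverson's direct argument, but the presentation in \cite[Theorem~7.9]{AB2018} is a \emph{homotopy} coequalizer, so augmentations of $\cA^{\mathsf{EN}}([\bL,\bn])$ do not simply correspond to augmentations $\epsilon$ of $\cA(\bar\bL)$ with $\epsilon\circ\bp_i=\epsilon\circ\bp_i'$ on the nose; one must either work with the strict quotient model (as in the pushout diagram for $[D(\bL)]$) or track the identification up to DGA homotopy. If you repair these two points --- use dipped diagrams for the $\R^3$ equivalence, and justify the strictification of the coequalizer --- the outline becomes a legitimate proof, and the final translation between $\epsilon\circ\bp_i=\epsilon\circ\bp_i'$ and the coincidence of matchings at $\sfv_i$ and $\sfv_i'$ is indeed the substantive remaining work, as you say.
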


\begin{lemma}\label{lemma:equivalence of augmentations}
Let $\bL=(\sL,\mu)$ be a Legendrian graph in $\R^3$ with a Maslov potential with $k$ vertices. Then $\cA(\bL)$ has a $\rho$-graded augmentation if and only if so does the DGA $\cA([D(\bL)])$ for $[D(\bL)]\subset M_k$
\end{lemma}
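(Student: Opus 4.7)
The plan is to exploit the (homotopy) pushout description
\[
\cA^{\mathsf{EN}}([D(\bL)]) \;=\; \cA(\bL)\; *_{\coprod_{i=1}^k \cI_{\sfv_i}(\bL)}\; \cA(\bL)
\]
recorded in the commutative square displayed immediately before the lemma, and to apply its universal property directly to augmentations valued in the trivial DGA $(\Z,0)$.

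The easy direction ($\Leftarrow$) will be obtained by precomposition. Given a $\rho$-graded augmentation $\tilde\epsilon\colon \cA^{\mathsf{EN}}([D(\bL)])\to\Z$, either of the two canonical DG-inclusions $\cA(\bL)\hookrightarrow\cA^{\mathsf{EN}}([D(\bL)])$ appearing as an edge of the pushout square yields, by composition with $\tilde\epsilon$, a $\rho$-graded augmentation of $\cA(\bL)$; grading and differential compatibility are preserved by composition.

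For the other direction ($\Rightarrow$), I would start from $\epsilon\colon \cA(\bL)\to\Z$, place the same augmentation $\epsilon$ on each of the two copies of $\cA(\bL)$ forming the pushout, and use the universal property to extend to a DGA morphism $\hat\epsilon\colon \cA^{\mathsf{EN}}([D(\bL)])\to\Z$. Such an extension exists provided the compatibility
\[
\epsilon \circ \bp_i \;=\; \epsilon \circ \bp_i'\colon \cI_{\sfv_i}(\bL)\to\Z
\]
holds for every $i$. Under the canonical doubling identification $\cI_{\sfv_i}(\bL_1)\simeq\cI_{\sfv_i'}(\bL_2)$, both $\bp_i$ and $\bp_i'$ reduce to the tautological inclusion of the peripheral DG-subalgebra at $\sfv_i$ into $\cA(\bL)$, so the compatibility is automatic and $\hat\epsilon$ is clearly $\rho$-graded.

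The step I expect to be the main obstacle is exactly this last claim: that after the natural identifications of the two copies of $\cA(\bL)$ and of $\cI_{\sfv_i}$ with $\cI_{\sfv_i'}$, the maps $\bp_i$ and $\bp_i'$ literally coincide. Verifying it requires unpacking the peripheral structure $\bp_\sfv$ from \cite{AB2018} and the isomorphism $\cI_{\sfv_i}(\bar\bL)\simeq\cI_{\sfv_i'}(\bar\bL)$, together with checking that the half-edge labelling conventions at $\sfv_i$ and its partner $\sfv_i'$ match on both copies. The manifest symmetry of the doubling $D(\bL) = \bL_1\amalg\bL_2$ should reduce this to careful bookkeeping, after which both directions of the lemma follow formally from the universal property of the amalgamated free product of DGAs, with no disk counting required.
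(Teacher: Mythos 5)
Your proposal is correct and matches the paper's approach: the paper's entire proof is the one-line remark that the claim ``is obvious from the universal property of the pushout diagram,'' and your argument simply spells out that universal property --- precomposition with the canonical inclusion $\cA(\bL)\hookrightarrow\cA^{\mathsf{EN}}([D(\bL)])$ for one direction, and gluing the same augmentation $\epsilon$ on both copies (using that $\bp_i$ and $\bp_i'$ both become $\bp_{\sfv_i}$ under the doubling identification, so $\epsilon\circ\bp_i=\epsilon\circ\bp_i'$ automatically) for the other.
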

\begin{proof}
This is obvious from the universal property of the pushout diagram.
\end{proof}

\begin{theorem}\label{theorem:equivalence of existence for graphs}
Let $\bL=(\sL,\mu)$ be a bordered Legendrian graph with a Maslov potential. Then the $\rho$-graded normal ruling for $\bL$ exists if and only if $\rho$-graded augmentation for $\cA(\bL)$ exists.
\end{theorem}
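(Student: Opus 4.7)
The plan is to chain together all the reduction results already established in the paper so as to deduce the statement from the analogous equivalence for Legendrian \emph{links} in $M_k=\#^k(S^1\times S^2)$, namely Theorem~\ref{theorem:equivalence of existence for links} (due to Sabloff, Leverson, and the earlier literature). In other words, I would assemble a string of six ``iff'' statements — three on the ruling side and three on the augmentation side — each one already proved in the paper.

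First I would reduce from the bordered graph $\bL$ to its closure $\hat\bL$, which is a non-bordered Legendrian graph in $\R^3$. On the algebra side, Theorem~\ref{theorem:equivalence of augmentations} gives that $\cA(\bL)$ admits a $\rho$-graded augmentation if and only if $\cA(\hat\bL)$ does. On the ruling side, Corollary~\ref{corollary:rulings for bordered Legendrian graphs} supplies a bijection $\bR^\rho_{\bL}\simeq\bR^\rho_{\hat\bL}$, so in particular $\bR^\rho_{\bL}\neq\emptyset\iff\bR^\rho_{\hat\bL}\neq\emptyset$. If $\ell$ or $r$ is odd then both sides of the theorem are vacuous, since the even-valence convention forces $\ell,r$ to be even whenever $\hat\bL$ is defined.

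Next I would pass from the non-bordered graph $\hat\bL$ to its double. Letting $k$ be the number of vertices of $\hat\bL$, the doubling construction produces a Legendrian link $[D(\hat\bL)]\subset M_k$, and we have two further equivalences ready to use: Corollary~\ref{cor:double ruling} identifies the existence of a $\rho$-graded ruling of $\hat\bL$ with the existence of a $\rho$-graded ruling of $[D(\hat\bL)]$ (in Leverson's sense, which by Corollary~\ref{corollary:Leverson} coincides with ours), while Lemma~\ref{lemma:equivalence of augmentations} identifies a $\rho$-graded augmentation of $\cA(\hat\bL)$ with one of the Ekholm--Ng algebra $\cA^{\mathsf{EN}}([D(\hat\bL)])$ obtained through the pushout description of that DGA.

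Finally, Theorem~\ref{theorem:equivalence of existence for links} applied to the Legendrian link $[D(\hat\bL)]\subset M_k$ closes the loop: $[D(\hat\bL)]$ admits a $\rho$-graded normal ruling if and only if $\cA^{\mathsf{EN}}([D(\hat\bL)])$ admits a $\rho$-graded augmentation. Reading the six equivalences in sequence yields the desired statement. The main obstacle is thus not in the new argument itself but in verifying that the normalization conventions in each reduction (absence of markings, parity of the numbers of boundary points, and the precise match between Leverson's definition and the one used in Section~3) are mutually compatible; this is already guaranteed by the way that the closure, the doubling, and the Ekholm--Ng algebra have been set up earlier in the paper.
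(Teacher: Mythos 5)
Your proposal is correct and follows exactly the same route as the paper's own proof, which simply chains Corollary~\ref{corollary:rulings for bordered Legendrian graphs}, Corollary~\ref{cor:double ruling}, Theorem~\ref{theorem:equivalence of existence for links}, Lemma~\ref{lemma:equivalence of augmentations}, and Theorem~\ref{theorem:equivalence of augmentations} in the same order you describe (and summarizes in the diagram following the theorem). Nothing is missing.
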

\begin{proof}
The theorem follows from Corollaries~\ref{corollary:rulings for bordered Legendrian graphs} and \ref{cor:double ruling}, Theorems~\ref{theorem:equivalence of augmentations} and \ref{theorem:equivalence of existence for links}, and Lemma~\ref{lemma:equivalence of augmentations}.
\end{proof}

Diagrammatically, one can present Theorem~\ref{theorem:equivalence of existence for graphs} as follows:
\[
\begin{tikzcd}[column sep=3pc, row sep=3pc]
\bR^\rho_\bL\neq\emptyset\arrow[d,Leftrightarrow,"\text{Cor.~\ref{corollary:rulings for bordered Legendrian graphs}}"'] 
\arrow[r,Leftrightarrow,dashed,"\text{Thm.~\ref{theorem:equivalence of existence for graphs}}"']&
\aug^\rho(\cA(\bL),\Z)\neq\emptyset\arrow[d,Leftrightarrow,"\text{Thm.~\ref{theorem:equivalence of augmentations}}"]\\
\bR^\rho_{\hat\bL}\neq\emptyset\arrow[d,Leftrightarrow,"\text{Cor.~\ref{cor:double ruling}}"']&
\aug^\rho(\cA(\hat\bL),\Z)\neq\emptyset\arrow[d,Leftrightarrow,"\text{Lem.~\ref{lemma:equivalence of augmentations}}"]\\
\bR^\rho_{[D(\hat\bL)]}\neq\emptyset\arrow[r,Leftrightarrow,"\text{Thm.~\ref{theorem:equivalence of existence for links}}"]&
\aug^\rho(\cA^{\mathsf{EN}}([D(\hat\bL)]),\Z)\neq\emptyset
\end{tikzcd}
\]

\subsection{Four-valent graphs and the Kauffman polynomial}

Now let us focus on four valent Legendrian graphs, which are the same as Legendrian singular links which have been studied in \cite{ABK2018}.

\begin{lemma}\label{lemma:skein_relation_of_ruling}
The 1-graded normal ruling polynomial $R_1$ satisfies the following skein relation:
\[
R_1\left(\Lcuspfour\right) = R_1\left(\Ldoublecusp\right) - (z-1) R_1\left(\Lstackedcusp\right) + R_1\left(\Lnestedcusp\right)
\]
\end{lemma}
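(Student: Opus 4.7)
The plan is to expand $R_1(\Lcuspfour)$ using the definition of the ruling polynomial for graphs and then compare terms. Because $\rho=1$, the Maslov potential is irrelevant and every perfect matching at the single four-valent vertex $\sfv$ (of type $(0,4)$) is $1$-graded, so
\[
R_1(\Lcuspfour) \;=\; \sum_{\phi\in\cP_\sfv} R_1\!\left(\Lcuspfour^{\phi}\right),
\]
and the sum ranges over the three perfect matchings $\{\{1,2\},\{3,4\}\}$, $\{\{1,4\},\{2,3\}\}$ and $\{\{1,3\},\{2,4\}\}$ of the four half-edges.

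First I would identify the resolutions coming from the two ``unlinked'' matchings. For $\phi=\{\{1,2\},\{3,4\}\}$, the subset $L(\phi)$ already consists of two pairs whose indices are adjacent in the cyclic order, so the construction of $\sL_\sfv^\prec$ needs no permutation braid and the resulting resolution is, up to the equivalences of Theorem~\ref{theorem:invariance for graphs}, exactly $\Ldoublecusp$. Similarly, $\phi=\{\{1,4\},\{2,3\}\}$ produces $\Lnestedcusp$. These give the first and third terms of the claimed skein relation with coefficient $+1$.

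The crux is the ``linked'' matching $\phi=\{\{1,3\},\{2,4\}\}$. Here the two pairs are interleaved, so before cusping, the construction inserts a permutation braid of the form $\sL_\beta\cdot\sL_{\beta^c}\cdot\sL_{\bar{\beta^c}}$, with all crossings in $\sL_{\beta^c}$ and $\sL_{\bar{\beta^c}}$ marked and only the crossing of $\sL_\beta$ left regular. By Lemma~\ref{lemma:fiber product} the ruling polynomial of the resolution factors as a convolution over boundary matchings of the ruling polynomial of this braid block with that of the cusp block, and the cusp block is precisely $\Lstackedcusp$. What remains is to show that the braid block contributes the scalar $-(z-1)$.

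The main computational step — and the principal obstacle — is this braid-block calculation. Marked crossings must lie in $\sC\setminus\fr$, so locally near them the only admissible normal ruling pieces are the two ``parallel'' configurations forced by the non-interlacing condition; in particular the marked crossings of $\sL_{\beta^c}$ and $\sL_{\bar{\beta^c}}$ each contribute a unique configuration. The single unmarked crossing of $\sL_\beta$ then has exactly two admissible local states, switched (weight $z$) and unswitched (weight $1$), with opposite effects on the induced boundary matching at the right border. Tracking the boundary matching through the three-crossing block, only the combinations that glue correctly with a decomposition of $\Lstackedcusp$ survive, and a direct enumeration shows that the net contribution as a linear map on matchings is $1-z$ times the identity on the relevant two-strand matching; hence the block contributes $-(z-1)\,R_1(\Lstackedcusp)$ after convolution. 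Summing the three resolution contributions yields the stated skein identity.
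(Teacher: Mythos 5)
There is a genuine gap: the identification of the three vertex resolutions with the three diagrams in the skein relation is scrambled, and the key step that actually produces the coefficient $-(z-1)$ is missing. For the type-$(0,4)$ vertex, the matching $\{\{1,2\},\{3,4\}\}$ pairs adjacent edges and yields the crossingless stacked diagram $\Lstackedcusp$, not $\Ldoublecusp$; the nested matching $\{\{1,4\},\{2,3\}\}$ yields $\Lnestedcusp$; and the linked matching $\{\{1,3\},\{2,4\}\}$ yields the crossed double cusp \emph{with its crossing marked}, i.e.\ $\Ldoublecuspblack$. Moreover, since $\ell=0$ here, $B(\phi)=\emptyset$ for every matching, $\beta$ is the empty braid, and by Remark~2.22 \emph{every} crossing in every resolution is marked --- so the ``single unmarked crossing of $\sL_\beta$'' on which your braid-block computation rests does not exist. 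Your claim that the linked resolution contributes $-(z-1)R_1(\Lstackedcusp)$ cannot be right in any case: the contribution of each resolution is a genuine ruling count, a sum of weights $z^{n(S_\fr)}$ with nonnegative coefficients, whereas $(1-z)R_1(\Lstackedcusp)$ has a negative one.

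The correct decomposition is
\[
R_1\left(\Lcuspfour\right)=R_1\left(\Lstackedcusp\right)+R_1\left(\Lnestedcusp\right)+R_1\left(\Ldoublecuspblack\right),
\]
and the missing idea is to compare the marked and unmarked crossed diagrams by splitting the rulings of $\Ldoublecusp$ according to whether its regular crossing lies in $\fr$ (is a switch) or not: the non-switched rulings are exactly those of $\Ldoublecuspblack$, and the switched ones are weighted $z$ times the rulings of the $0$-resolution, which is $\Lstackedcusp$. This gives $R_1\left(\Ldoublecusp\right)=R_1\left(\Ldoublecuspblack\right)+zR_1\left(\Lstackedcusp\right)$, and substituting produces the coefficient $1-z=-(z-1)$ on $R_1\left(\Lstackedcusp\right)$. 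Your overall strategy (sum over the three resolutions) is the paper's, but without this switch/no-switch identity and with the diagrams correctly matched to the resolutions, the argument does not close.
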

\begin{proof}
As seen before, the full resolutions for $L_{0_4}=\Lcuspfour$ are
\[
\tilde L_{0_4}=\left\{\Ldoublecuspblack, \Lnestedcusp, \Lstackedcusp \right\}
\]
and so
\[
R_1\left(\Lcuspfour\right) = R_1\left(\Ldoublecuspblack\right) + R_1\left(\Lnestedcusp\right) + R_1\left(\Lstackedcusp\right).
\]
For a given crossing $\sfc \in \sC$, the set of normal ruling can be decomposed into two sets whether a normal ruling contains $\sfc$ or not.
Thus we have
\begin{align*}
R_1\left(\Ldoublecusp\right)=R_1\left(\Ldoublecuspblack\right) + z R_1\left(\Lstackedcusp\right)
\end{align*}
and therefore the claim is proved.
\end{proof}

\begin{definition}\cite{Kauffman1990}
Let $K=K_1\sqcup\cdots\sqcup K_n$ be an unoriented $n$-component link. The {\em unnormalized Kauffman polynomial} $[K]$ for a link $K$ is a polynomial of two variables $(a,z)$ which satisfies the following skein relation:
\begin{align*}
\left[\theunknot\right] &= 1,&
\left[\crossingpositive\right]-\left[ \crossingnegative\right] &= z\left(\left[\crossinghorizontal\right] - \left[\crossingvertical\right]\right),\\
\left[\positivekink\right] &= a \left[ \horizontalline \right],&
\left[\negativekink\right] &= a^{-1} \left[ \horizontalline \right].
\end{align*}

The {\em (normalized) Kauffman polynomial} $F_K$ for a link $K$ is defined to be
\begin{align*}
F_K&\coloneqq a^{-\mathbf{w}(K)}[K],&
\mathbf{w}(K)&\coloneqq \sum_{i=1}^n w(K_i),
\end{align*}
where $w(K_i)$ is the writhe of the component $K_i$ of $K$.
\end{definition}

Usually, the Kauffman polynomial $F_K$ is defined only for (unoriented) knots or oriented links since the notion of total writhe for unoriented link is ambiguious. However, it is still well-defined that the sum of component-wise writhes.
Therefore it is easy to see that $F_K$ is invariant under the ambient isotopy.
\begin{remark}
The polynomial $F_K$ is originally defined by Kauffman but denoted by $U_K$. See Page 13 in \cite{Kauffman1990}.
\end{remark}

For Legendrian links, there is a known degree bound of the Kauffman polynomial with respect to the variable $a$.
\begin{lemma}\cite{Rutherford2006}\label{lemma:upper bound for links}
For any Legendrian link $\sK$, the degree $\deg_a[\sK]$ is at most $-1$. Equivalently, 
\[
\deg_a F_\sK \le -1 - tb(\sK).
\]
\end{lemma}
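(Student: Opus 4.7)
The plan is to prove the $a$-degree bound by induction on a suitable complexity of a regular front diagram $D$ of $\sK$, using the Kauffman skein relation
\[
[\sK_+] - [\sK_-] = z\bigl([\sK_0] - [\sK_\infty]\bigr)
\]
as the main tool, together with the kink relations $[\positivekink] = a[\horizontalline]$, $[\negativekink] = a^{-1}[\horizontalline]$ and the unknot normalization $[\theunknot] = 1$.

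For the base case, a crossingless front represents a disjoint union of standard Legendrian unknots (possibly with cusp-pair stabilizations). Iterated applications of the kink relations reduce each component to a genuine round circle modulo accumulated powers of $a^{\pm 1}$, and then the loop-value relation $[\theunknot \sqcup L] = \bigl((a+a^{-1})z^{-1}-1\bigr)[L]$ together with $[\theunknot] = 1$ evaluates $[\sK]$ explicitly. A straightforward computation then verifies $\deg_a[\sK] \le -1$: each standard component contributes $a$-degree at most $0$ via its matched pair of cusps, and each additional stabilizing cusp-pair lowers the $a$-degree by $2$.

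For the inductive step, I fix a crossing $\sfc$ of $D$ and apply the skein relation to express $[\sK]$ in terms of $[\sK_-]$, $[\sK_0]$, and $[\sK_\infty]$. Both smoothings $\sK_0$ and $\sK_\infty$ are Legendrian fronts with strictly fewer crossings, so $\deg_a[\sK_0], \deg_a[\sK_\infty] \le -1$ by the primary inductive hypothesis, and multiplication by $\pm z$ does not affect the $a$-degree.

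The main obstacle is the $[\sK_-]$ term, since a crossing change is not a Legendrian move and $\sK_-$ is not itself a Legendrian front. I plan to handle it by realizing the crossing change through a local modification that inserts two cancelling cusps in the front, producing an identity of the form $[\sK_-] = a^{-2}[\sK']$, where $\sK'$ is a genuine Legendrian front with the same number of crossings as $\sK$ but with two additional cusps. Since this substitution does not reduce the crossing count, the primary induction alone is insufficient; I will therefore run a secondary induction on cusp count, organising the whole argument as an induction on the lexicographic pair (number of crossings, number of cusps). The $a^{-2}$ factor guarantees that even if $\sK'$ saturates the bound $\deg_a[\sK'] \le -1$, one has $\deg_a[\sK_-] \le -3$, which is strictly below what is needed to maintain $\deg_a[\sK_+] \le -1$. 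Assembling all contributions yields the claimed bound, and the equivalent formulation $\deg_a F_\sK \le -1 - tb(\sK)$ follows at once from $F_\sK = a^{-\mathbf{w}(\sK)}[\sK]$ and the writhe-vs-Thurston--Bennequin convention of the paper.
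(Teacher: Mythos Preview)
The paper does not give its own proof of this lemma: it is quoted from \cite{Rutherford2006} (with a convention shift explained in the remark immediately following it), so there is no in-paper argument to compare yours against.

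That said, your argument has a genuine gap. You organise the induction on the lexicographic pair (number of crossings, number of cusps) and then need the inductive hypothesis for $\sK'$ in order to control $[\sK_-]$. But by your own description $\sK'$ has the \emph{same} number of crossings as $\sK$ and \emph{two more} cusps, so in the order you specify $\sK'$ is strictly \emph{larger} than $\sK$, not smaller. The inductive hypothesis is therefore unavailable for $\sK'$, and the step is circular. Reversing the cusp component of the order does not help either, since the cusp count is unbounded above and a downward induction on it has no base case.

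There is a second issue you have glossed over. The skein relation lives at the level of link diagrams, while your inductive hypothesis is stated for Legendrian fronts via Ng's resolution. The diagram-level vertical smoothing $\sK_\infty$ is \emph{not} the Ng resolution of the front obtained by replacing the crossing with a cusp pair: the two differ by precisely the kink coming from the new right cusp, so $[\sK_\infty]$ and $[\sK_\infty^{\mathrm{front}}]$ differ by a factor of $a$. Hence the inductive hypothesis yields only $\deg_a[\sK_\infty]\le 0$, not $\le -1$, and the term $-z[\sK_\infty]$ alone already threatens the bound. The same kink bookkeeping makes the asserted identity $[\sK_-]=a^{-2}[\sK']$, with $\sK'$ a Legendrian front having the \emph{same} crossing count as $\sK$, far from obvious; you have not actually constructed such a $\sK'$.
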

\begin{remark}
Here, we are using a slightly different convention for the Kauffman polynomial of Legendrian links from \cite{Rutherford2006} since we consider the additional kink for each right cusp.

One of the benefit of our convention is that the upper bound is always $-1$. Compare this with Lemma~2.2 in \cite{Rutherford2006}, where the upper bound is given as $\#(\lcusp_\sL) -1$.
\end{remark}

\begin{theorem}\cite[Theorem~3.1]{Rutherford2006}\label{theorem:ruling and kauffman polynomial for links}
For a Legendrian knot $\sK$, the ungraded ruling polynomial is the same as the coefficient of $a^{-1-tb(\sK)}$ in the shifted Kauffman polynomial $z^{-1}F_\sK$.
\end{theorem}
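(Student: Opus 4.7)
The plan is to follow Rutherford's inductive argument, using the Kauffman skein relations as the main engine while carefully tracking the top-degree coefficient in the variable $a$. The strategy is to establish parallel skein recursions for both sides of the claimed equality and verify them on a small list of base diagrams.

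First, I would derive a skein-type decomposition for the ungraded ruling polynomial $R_1$ at a generic crossing $\sfc\in\sC$. Splitting $\bR^1_\sK=\bR^1_\sK(\sfc\in\fr)\sqcup \bR^1_\sK(\sfc\notin\fr)$ and invoking the normality condition (in the spirit of Lemma~\ref{lemma:skein_relation_of_ruling}) yields a relation between $R_1(\sK)$ and the $R_1$-values of diagrams related to $\sK$ by the local replacements $\crossing$, $\crossinghorizontal$ and $\crossingvertical$. In parallel, I would show that the coefficient of $a^{-1-tb(\sK)}$ in $z^{-1}F_\sK$ satisfies the same recursion: the Kauffman skein $[\crossingpositive]-[\crossingnegative]=z([\crossinghorizontal]-[\crossingvertical])$, combined with the degree bound $\deg_a[\sK]\le -1$ from Lemma~\ref{lemma:upper bound for links}, forces the ``switched-crossing'' contribution into a strictly lower $a$-order, so that only the terms paralleling the ruling decomposition survive at the top $a$-degree.

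The base case consists of Legendrian fronts without crossings, which are disjoint unions of nested standard unknots. For these both quantities can be computed directly from the Kauffman axioms and from the definition of $R_1$ (a single unknot gives $R_1 = z^{-1} + z$ after removing one cusp, balancing with the $z^{-1}$ shift), and the two answers agree. The inductive step then combines the two skein recursions to conclude.

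The main obstacle will be matching the non-interlacing (normality) requirement for rulings with the configurations surviving at the top $a$-order of the iterated Kauffman expansion. Concretely, one must argue that when a crossing is declared to lie in $\fr$, only a non-interlacing smoothing genuinely contributes to $a^{-1-tb(\sK)}$: the remaining local configurations at that crossing produce intermediate diagrams whose writhe-adjusted Kauffman polynomials land in strictly lower $a$-degree, again by Lemma~\ref{lemma:upper bound for links} applied to the intermediate diagrams. This bookkeeping, together with tracking the $z$-exponent through left and right cusps (so that each cusp properly accounts for an eye or a half-eye in the final ruling weight), is where the delicate combinatorics lies.
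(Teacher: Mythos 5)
First, note that the paper does not prove this statement: it is imported verbatim from Rutherford (cited as Theorem~3.1 of \cite{Rutherford2006}), with only a remark reconciling the weight conventions (Rutherford's rulings are weighted so that no $z^{-1}$-shift of the Kauffman polynomial is needed). So your proposal is being measured against Rutherford's original argument rather than against anything proved in this paper.

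Your outline does capture the general shape of that argument (parallel skein recursions for the two sides, the degree bound in $a$, induction on crossings), but as written it has three concrete gaps. First, the skein decomposition of $R_1$ at a \emph{generic} crossing $\sfc$ is not the clean identity you assert: splitting rulings according to whether $\sfc\in\fr$ does not put the $\sfc\in\fr$ part in bijection with rulings of the $0$-resolution, because re-inserting $\sfc$ as a switch can violate the non-interlacing condition. The paper's own Lemma~\ref{lemma:skein_relation_of_ruling} applies this decomposition only in a special local configuration directly adjacent to two left cusps, where normality is automatic; at a generic crossing the interaction with normality is precisely the hard combinatorial content of Rutherford's proof, and your proposal defers it ("this bookkeeping \dots is where the delicate combinatorics lies") rather than resolving it. Second, your degree-drop argument for the switched-crossing term invokes Lemma~\ref{lemma:upper bound for links}, but that bound is a statement about Legendrian fronts, and the diagram obtained by switching a crossing of (the Ng resolution of) a front is not such a diagram; one must either re-realize it as a front of strictly smaller Thurston--Bennequin invariant or use a purely diagrammatic degree estimate for the Kauffman polynomial. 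Third, the base case is off: under this paper's weight convention the standard Legendrian unknot has a unique ruling, of weight $z^{-1}$ (not $z^{-1}+z$), and the crossingless base case must also include stabilized components (closed front components with four or more cusps), for which $R_1=0$ and the corresponding coefficient of the Kauffman polynomial must separately be shown to vanish. Until the first two points are supplied with actual arguments, the induction does not close.
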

\begin{remark}
Note that in \cite{Rutherford2006}, the weight convention for each normal ruling is
\[
z^{\#\{switches\}-\#\{eyes\}+1}
\]
and so there is no need to consider the shifted Kauffman polynomial.
\end{remark}

\begin{definition}\label{definition:4-valent spatial skein}\cite{Kauffman1989}
The unnormalized Kauffman polynomial $[\Gamma]$ for a 4-valent spatial graph $\Gamma$ is a polynomial of three variables $(a,A,B)$ which satisfies the additional skein relation:
\begin{align}\label{eq:4-valent spatial skein}
\left[\crossingsingular\right] &= \left[ \crossingpositive\right] - A\left[\crossinghorizontal\right] - B\left[\crossingvertical\right],
\end{align}
where $z=A-B$.

The unnormalized Kauffman polynomial $[\sL]$ for a 4-valent Legendrian graph $\sL$ is given by the Kauffman polynomial of the Ng's resolution of $\sL$.
\end{definition}
\begin{remark}
One can use the following skein relation for Kauffman polynomial for 4-valent graphs instead:
\begin{align*}
\left[\crossingsingular\right] &= 
\left[ \crossingnegative\right] - B\left[\crossinghorizontal\right] - A\left[\crossingvertical\right].
\end{align*}
\end{remark}

To define the (normalized) Kauffman polynomial for 4-valent graphs, we first resolve all vertices in a {\em virtual way}, that is, all 4-valent vertices will be regarded as virtual transverse crossings. The result will be a virtual link and denoted by 
\[
\sL^\otimes=\sL^\otimes_1\sqcup\cdots\sqcup \sL^\otimes_n.
\]
Then the component-wise writhes $w(\sL^\otimes_i)$ for the virtual link $\sL^\otimes$ are well-defined again. 
In practice, for each component $\sL^\otimes_i$, $w(\sL^\otimes_i)$ is the sum of signed {\em real} crossings.

\begin{definition}[Total writhe for Legendrian 4-valent graphs]
Let $\sL$ be a Legendrian 4-valent graph and $\sL^\otimes=\sL^\otimes_1\sqcup\cdots\sqcup\sL^\otimes_n$ be the virtual link obtained by the virtual resolution. The total writhe is defined as follows:
\[
\mathbf{w}(\sL)\coloneqq \sum_{i=1}^n w(\sL^\otimes_i).
\]
\end{definition}
Notice that if $\sL$ is a Legendrian knot, then the writhe of the Ng's resolution of $\sL$ is the same as Thurston-Bennequin number $tb(\sL)$. Therefore we may regard the total writhe $\mathbf{w}(\sL)$ as the total Thurston-Bennequin number $\mathbf{tb}(\sL)$.

\begin{example}
Consider the following $4$-valent Legendrian graph and its virtual resolution which is depicted by  the diagram with red circles:
\[
\begin{tikzcd}
\sL=\vcenter{\hbox{\includegraphics[scale=1]{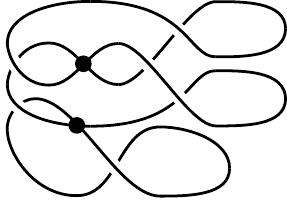}}}
\arrow[r]&
\sL^\otimes=\vcenter{\hbox{\includegraphics[scale=1]{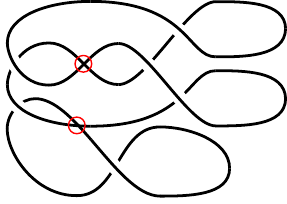}}}
\end{tikzcd}
\]
Let us denote the upper and lower component of $\sL^\otimes$ by $\sL_1^\otimes$ and $\sL_2^\otimes$, respectively. Then we have $w(\sL_1^\otimes )=0$, $w(\sL_2^\otimes)=-1$, and hence $\bw(\sL)=-1$.
\end{example}

\begin{definition}[Kauffman polynomials for spatial 4-valent graphs]
The (normalized) Kauffman polynomial $F_\sL$ for a spatial 4-valent graph $\sL$ is defined as
\[
F_\sL\coloneqq a^{-\mathbf{w}(\sL)}[\sL].
\]
\end{definition}
Then one can see that the Kauffman polynomial is invariant under the ambient isotopy and the above two results can be generalized to 4-valent Legendrian graphs as follows:
\begin{lemma}
The following holds: for any Legendrian 4-valent graph $\sL$,
\[
\deg_a [\sL] \le -1,
\]
or equivalently,
\[
\deg_a F_\sL \le -1 -\mathbf{tb}(\sL).
\]
\end{lemma}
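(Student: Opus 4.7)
The plan is to reduce the bound to its link analogue Lemma~\ref{lemma:upper bound for links} by resolving every four-valent vertex via the skein relation~\eqref{eq:4-valent spatial skein}. Recall that $[\sL]$ is defined as the Kauffman polynomial of Ng's resolution $\res(\sL)$, which is a four-valent spatial diagram with one singular crossing for each $\sfv\in\sV_\sL$. Applying the relation
$$[\crossingsingular] \;=\; [\crossingpositive] - A\,[\crossinghorizontal] - B\,[\crossingvertical]$$
at each such singular crossing identifies the three local terms on the right-hand side, up to planar isotopy, with the Lagrangian projections of the three Legendrian resolutions of $\sfv$ displayed in Example~\ref{ex:resolution of 4-valent}.

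Iterating this skein replacement at all $v = |\sV_\sL|$ vertices produces an expansion
$$[\sL] \;=\; \sum_{\Lambda\in\tilde\sL} c_\Lambda\,[\Lambda],$$
where $\tilde\sL$ is the set of $3^v$ full Legendrian resolutions, each $\Lambda$ is a genuine Legendrian link, and every coefficient $c_\Lambda\in\Z[A,B]$ is a monomial whose $a$-degree is zero. Applying Lemma~\ref{lemma:upper bound for links} term by term yields $\deg_a[\Lambda] \le -1$ for each $\Lambda$, and since the $c_\Lambda$'s contribute no $a$-powers, the desired inequality $\deg_a[\sL] \le -1$ follows.

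The equivalent form $\deg_a F_\sL \le -1-\mathbf{tb}(\sL)$ is then immediate from the definition $F_\sL = a^{-\mathbf{w}(\sL)}[\sL]$ combined with $\mathbf{w}(\sL) = \mathbf{tb}(\sL)$, which is built into the definition of the total Thurston--Bennequin number of a four-valent Legendrian graph via the virtual resolution. The main subtlety in the argument is the first step: one has to confirm that the three local pictures appearing in the skein expansion at a vertex really do match, up to isotopy, the three Lagrangian projections of the Legendrian resolutions listed in Example~\ref{ex:resolution of 4-valent}. Once this local verification is carried out, the remainder of the argument reduces entirely to invoking the link-level bound.
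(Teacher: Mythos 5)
Your argument is correct and is essentially the paper's own proof: the paper likewise applies the skein relation \eqref{eq:4-valent spatial skein} at the singular crossings (phrased as an induction on the number of vertices rather than a one-shot expansion into $3^{|\sV_\sL|}$ terms), observes that $A$ and $B$ carry no $a$-degree, and invokes Lemma~\ref{lemma:upper bound for links} on the resulting Legendrian links. The local identification you flag as the main subtlety is exactly the content the paper also relies on (cf.\ Example~\ref{ex:resolution of 4-valent}), so there is no substantive difference between the two arguments.
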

\begin{proof}
Due to the skein relation \eqref{eq:4-valent spatial skein} for 4-valent graphs, we have
\begin{align*}
\deg_a\left[\crossingsingular\right]&= \deg_a\left(\left[\crossingpositive\right]-A\left[\crossinghorizontal\right] -B\left[\crossingvertical\right]\right)\\
&\le \max\left\{
\deg_a\left[\crossingpositive\right], \deg_a\left[\crossinghorizontal\right], \deg_a\left[\crossingvertical\right]
\right\}.
\end{align*}
By the induction on the number of vertices and Lemma~\ref{lemma:upper bound for links}, we are done.
\end{proof}

\begin{theorem}\label{theorem:ruling and kauffman polynomial for graphs}
Let $\sL$ be a regular front projection of a 4-valent Legendrian graph.
The ungraded $(\rho=1)$ ruling polynomial $R^1_\sL$ for $\sL$ is the same as the coefficient of $a^{-\mathbf{tb}(\sL)-1}$ ($a^{-1}$, resp.) in the shifted Kauffman polynomial $z^{-1}F_\sL$ (unnormalized polynomial $z^{-1}[\sL]$, resp.) after replacing $A$ and $B$ with $(z-1)$ and $-1$, respectively.
\end{theorem}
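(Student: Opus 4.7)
The plan is induction on the number $k=|\sV_\sL|$ of (necessarily $4$-valent) vertices of $\sL$, using Rutherford's Theorem~\ref{theorem:ruling and kauffman polynomial for links} as the base case and matching two skein relations in the inductive step. Since $\mathbf{w}(\sL)=\mathbf{tb}(\sL)$ gives $F_\sL = a^{-\mathbf{tb}(\sL)}[\sL]$, the claim for $z^{-1}F_\sL$ at $a^{-\mathbf{tb}(\sL)-1}$ is equivalent to the claim for $z^{-1}[\sL]$ at $a^{-1}$; I will argue the unnormalized form and deduce the normalized form at the end.

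The base case $k=0$ is immediate from Theorem~\ref{theorem:ruling and kauffman polynomial for links}, since such $\sL$ is a Legendrian link. For the inductive step, assume the statement holds for all $4$-valent Legendrian graphs with fewer than $k\ge 1$ vertices, and let $\sL$ have $k$ vertices. Pick $\sfv\in\sV_\sL$; after a Legendrian isotopy (which preserves both sides of the desired identity) we may assume $\sfv$ has type $(0,4)$, with local front $\Lcuspfour$. Applying the Kauffman--Vogel skein relation \eqref{eq:4-valent spatial skein} at $\sfv$ with $A=z-1$ and $B=-1$ yields
\[
z^{-1}[\sL] \;=\; z^{-1}[\sL_1] \;-\; (z-1)\,z^{-1}[\sL_2] \;+\; z^{-1}[\sL_3],
\]
where $\sL_1,\sL_2,\sL_3$ are the graphs obtained by replacing $\sfv$ locally by $\crossingpositive$, $\crossinghorizontal$, $\crossingvertical$ respectively. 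Each $\sL_i$ has $k-1$ vertices, so by induction the coefficient of $a^{-1}$ in $z^{-1}[\sL_i]$ equals $R_1(\sL_i)$, whence the coefficient of $a^{-1}$ in $z^{-1}[\sL]$ is
\[
R_1(\sL_1)-(z-1)R_1(\sL_2)+R_1(\sL_3).
\]

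To finish, I invoke the ruling skein relation of Lemma~\ref{lemma:skein_relation_of_ruling}, which expresses $R_1(\sL)$ as exactly such a combination with the local front replacements $\Ldoublecusp$, $\Lstackedcusp$, $\Lnestedcusp$ of $\Lcuspfour$. The main obstacle is this final matching step: one must verify that, after a suitable choice of Legendrian representatives (explicitly supplied by Example~\ref{ex:resolution of 4-valent} for a type-$(0,4)$ vertex), the three topological smoothings $\sL_1,\sL_2,\sL_3$ in the Kauffman--Vogel expansion coincide as Legendrian graphs with the three fronts on the right-hand side of Lemma~\ref{lemma:skein_relation_of_ruling}, with equal total Thurston--Bennequin numbers so that no stray $a^{\pm 1}$ factors intrude. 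This is a diagrammatic verification using Ng's resolution (Definition~\ref{definition:Ng's resolution}) in a neighbourhood of $\sfv$; the correspondences to confirm are $\crossingpositive\leftrightarrow\Ldoublecusp$, $\crossinghorizontal\leftrightarrow\Lstackedcusp$ and $\crossingvertical\leftrightarrow\Lnestedcusp$. Once this is settled, Lemma~\ref{lemma:skein_relation_of_ruling} closes the induction, and multiplying through by $a^{-\mathbf{tb}(\sL)}$ delivers the normalized statement for $z^{-1}F_\sL$.
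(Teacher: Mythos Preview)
Your proof is correct and follows essentially the same approach as the paper's own proof: induction on the number of vertices, with Rutherford's Theorem~\ref{theorem:ruling and kauffman polynomial for links} as the base case and the matching of the two skein relations (Lemma~\ref{lemma:skein_relation_of_ruling} versus \eqref{eq:4-valent spatial skein} with $A=z-1$, $B=-1$) for the inductive step. The paper's argument is extremely terse---it simply asserts that ``both satisfy the same skein relation'' and invokes induction---while you spell out the reduction to a type-$(0,4)$ vertex via Legendrian isotopy and name the specific local correspondences $\crossingpositive\leftrightarrow\Ldoublecusp$, $\crossinghorizontal\leftrightarrow\Lstackedcusp$, $\crossingvertical\leftrightarrow\Lnestedcusp$ that must be checked under Ng's resolution; this is exactly the content the paper leaves implicit.
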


Simply speaking, this theorem implies the existence of a topological invariant for 4-valent spatial graphs which is a two variable polynomial of $a$ and $z$ whose certain coefficient of $a$ coincides with the ruling polynomial.

\begin{proof}[Proof of Theorem~\ref{theorem:ruling and kauffman polynomial for graphs}]
As seen in Lemma~\ref{lemma:skein_relation_of_ruling} and definition of Kauffman polynomial for 4-valent graphs, both satisfy the same skein relation after replacing $A$ and $B$ as above.
Therefore by using the induction on the number of vertices, we only need to consider Legendrian links which has been already covered by Theorem~\ref{theorem:ruling and kauffman polynomial for links}.
\end{proof}

\begin{example}
Let us consider the following Legendrian graph $\sL$ having a valency 4-vertex with three vertex resolutions as follows:
\[
\begin{tikzcd}[column sep=4pc]
\sL=\vcenter{\hbox{\scalebox{1.5}{}}}
\arrow[r,"\scriptstyle{\{\{1,3\},\{2,4\}\}}"]
\arrow[rd,"\scriptstyle{\{\{1,2\},\{3,4\}\}}"]
\arrow[d,"\scriptstyle{\{\{1,4\},\{2,3\}\}}"']
&
\sL_-=\vcenter{\hbox{\scalebox{1.5}{\includegraphics{ex_front_1.pdf}}}}\\
\sL_0=\vcenter{\hbox{\scalebox{1.5}{\includegraphics{ex_front_2.pdf}}}}&
\sL_\infty=\vcenter{\hbox{\scalebox{1.5}{\includegraphics{ex_front_3.pdf}}}}
\end{tikzcd}
\]
Since $R^1_{\sL_-}=z^{-1}$, $R^1_{\sL_0}=z^{-2}+1$, and $R^1_{\sL_\infty}=0$, we have $R^1_\sL=z^{-2}+z^{-1}+1$.

On the other hand, the corresponding Lagrangian projection of $\sL$ with its resolutions are the following:
\[
\let\decsc\DecorationScale
\renewcommand{\DecorationScale}{0.15}
\begin{tikzcd}[column sep=4pc]
K=\vcenter{\hbox{\includegraphics[scale=1]{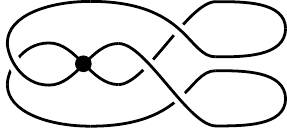}}}
\arrow[r,"\crossingsingular\mapsto\crossingnegative"]
\arrow[rd,"\crossingsingular\mapsto\crossingvertical"]
\arrow[d,"\crossingsingular\mapsto\crossinghorizontal"']
&
K_-=\vcenter{\hbox{\includegraphics[scale=1]{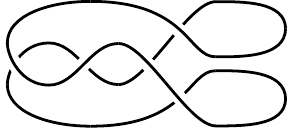}}}\\
K_0=\vcenter{\hbox{\includegraphics[scale=1]{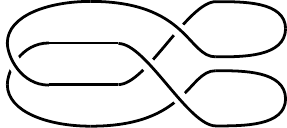}}}&
K_\infty=\vcenter{\hbox{\includegraphics[scale=1]{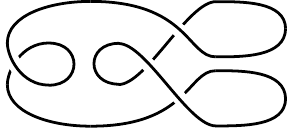}}}
\end{tikzcd}
\renewcommand{\DecorationScale}{\decsc}
\]
Thus we have
\begin{align*}
[K]&=[K_-] - B[K_0] - A[K_\infty]\\
&=a^{-1} - B(-z^{-1}a^{-3}-z a^{-3}+a^{-2} + z^{-1} a^{-1} + za^{-1})  - A a^{-4}.
\end{align*}
If we regard $K$ as a virtual knot, then it has two positive crossings and two negative crossings and so ${\bf w}(K)={\bf tb}(\sL)=0$. 
This implies that the shifted Kauffman polynomial $z^{-1}F_K$ with $A=z-1$ and $B=-1$ becomes
\[
(z^{-1}-1)a^{-4}+(-z^{-2}-z^{-1})a^{-3}+ z^{-1} a^{-2} + \boxed{(z^{-2}+ z^{-1} +1)}a^{-1}.
\]
Here we can check that $R^1_\sL$ appear in the coefficient of $a^{-\mathbf{tb}(\sL)-1}$.
\end{example}

\begin{question}
Can Theorem~\ref{theorem:ruling and kauffman polynomial for graphs} be generalized to arbitrary Legendrian graphs? 
Namely, does there exist a topological invariant for spatial graphs which is two variable polynomial of $a$ and $z$ whose certain coefficient of $a$ gives us the ruling polynomial $R^1$?
\end{question}

There is a partial answer to the above question for spatial graphs with vertices of valency at most six. However, for spatial graphs with a vertex of valency eight or higher, one should consider hundreds of resolutions and so it is not easy to determine the coefficients of the skein relation that resolving vertices, for example, the coefficients $A$ and $B$ in the skein relation \eqref{eq:4-valent spatial skein}.

\section{Proof of the invariance theorem}\label{sec:invariance}

Recall that the Legendrian half-twist braid $\Delta_b$ is defined inductively as follows:
\[
\Delta_b\coloneqq\vcenter{\hbox{\tiny}}~.
\]
By adding $(2n-b)$ trivial strands below $\Delta_b$, we obtain the bordered Legendrian link $\Delta_b^{2n}$ of type $(2n,2n)$.

\begin{lemma}
Let $\mathbf{\Delta}_b^{2n}\coloneqq((\Delta_b^{2n},\mu),\emptyset)$ be a bordered Legendrian link of type $(2n,2n)$ with a Maslov potential, where $\Delta_b$ is the half-twist braid of the upper $b$ strands among $2n$ strands. Then 
\[
R^\rho_{\mathbf{\Delta}^{2n}_b}(\phi,\psi)=0
\]
if $\phi$ or $\psi$ matches at least one pair of the first $b$ strands.
\end{lemma}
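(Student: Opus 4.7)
The plan is to prove this by induction on $b$. The statement about $\psi$ follows from the one about $\phi$ by horizontally reflecting $\Delta_b^{2n}$ (which swaps left and right borders while preserving the braid $\Delta_b$), so I treat only the $\phi$ case. The key tool is Lemma~\ref{lemma:fiber product} applied to the factorization
\[
\Delta_b \;=\; W_{b-1}\cdot\Delta_{b-1},
\]
where $W_{b-1}:=\sigma_1\sigma_2\cdots\sigma_{b-1}$ moves strand $1$ down to position $b$, and the second factor is the half-twist on the top $b-1$ positions thereafter.

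For the base case $b=2$, $\Delta_2^{2n}$ has a single crossing $\sfc$ between strands $1$ and $2$. If $\phi$ pairs $\{1,2\}$, these strands enter the tangle as companion arcs of a single ruling piece. A direct case analysis on whether or not $\sfc\in\fr$ shows that the rigid companion pairing inherited from $\phi$ is incompatible with the allowed local configurations at $\sfc$ and with the shapes of the decomposition pieces (eyes, left and right half-eyes, parallels), so no valid ruling exists.

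For the inductive step, applying Lemma~\ref{lemma:fiber product} to the factorization gives
\[
R^\rho_{\mathbf{\Delta}_b^{2n}}(\phi,\psi) \;=\; \sum_{\varphi} R^\rho_{\mathbf{W}_{b-1}^{2n}}(\phi,\varphi)\,R^\rho_{\mathbf{\Delta}_{b-1}^{2n}}(\varphi,\psi).
\]
The inductive hypothesis kills every term in which $\varphi$ matches two positions in the top $b-1$. For each surviving $\varphi$, I would show $R^\rho_{\mathbf{W}_{b-1}^{2n}}(\phi,\varphi)=0$: since every crossing of $W_{b-1}$ involves strand $1$, the companion of any top-$b$ strand other than $1$ can change only at its unique crossing against strand $1$. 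Tracing the chord diagram of the ruling from left to right, the chord $\{i,j\}\subseteq\{1,\ldots,b\}$ present in $\phi$ must be disconnected before strands $i$ and $j$ meet (otherwise the base-case obstruction recurs at their unique $(i,j)$-crossing inside $\Delta_b$), and the non-interlacing condition at each strand-$1$ switch pins the newly assigned partner to a specific range of positions. The requirement that $\varphi$ avoid top-$b-1$ pairs is then incompatible with those forced partner assignments.

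The main obstacle is the detailed combinatorial bookkeeping inside $W_{b-1}$: strand $1$ may participate as a switch in any subset of the $b-1$ crossings, and at each admissible switch the non-interlacing restriction confines the swapped partner to a specific interval. The analysis requires verifying that every admissible sequence of switches either reproduces a pair inside the top $b-1$ on the right, contradicting the restriction on $\varphi$, or leads to a companions-meet configuration at a later crossing, contradicting the base-case argument.
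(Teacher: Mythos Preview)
Your approach---induction on $b$ via the factorization $\Delta_b = W_{b-1}\cdot\Delta_{b-1}$ and Lemma~\ref{lemma:fiber product}---is exactly the paper's, and your sketch is in fact more detailed than the paper's one-line proof (which reads ``We omit the detail''). The base case and the reduction to the claim about $W_{b-1}$ are both correct.

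The justification of that claim, however, contains a confusion. You argue that the chord $\{i,j\}$ in $\phi$ ``must be disconnected before strands $i$ and $j$ meet, otherwise the base-case obstruction recurs at their unique $(i,j)$-crossing inside $\Delta_b$.'' But if $1\notin\{i,j\}$, strands $i$ and $j$ do not meet in $W_{b-1}$ at all---you yourself observe that every crossing of $W_{b-1}$ involves strand $1$---so that crossing lives in the $\Delta_{b-1}$ factor you have already split off, and it places no constraint on rulings of $W_{b-1}$ taken in isolation. The claim is nonetheless true. One clean route: first note that the number of pairs lying entirely in $\{1,\ldots,b\}$ is invariant through $W_{b-1}$ (both switches and non-switches preserve it), so $\varphi$ inherits such a pair; then show by induction on $c$ that no ruling of $W_c=\sigma_1\cdots\sigma_c$ has right matching whose \emph{only} pair inside $\{1,\ldots,c+1\}$ is of the form $\{k,c+1\}$, since a switch at $\sigma_c$ is forbidden by normality (the companion of position $c$ must then lie beyond $c+1$, forcing interlacing with $\{k,c+1\}$) and a non-switch reduces to the same statement for $W_{c-1}$.
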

\begin{proof}
One can prove that if a normal ruling whose boundary matches the first strand with a strand in $\Delta_{b-1}$, then it implies that the existence of a normal ruling whose boundary matches two strands in $\Delta_{b-1}$, which is a contradiction. We omit the detail.
\end{proof}

Let $\beta$ be a permutation braid. Then $\bar\beta\beta$ is {\em palindromic}, that is the same as its reverse, and moreover it is {\em pure}, meaning that its induced permutation is the identity. 
Moreover, $\tau(\beta)\coloneqq\Delta_b\beta\Delta_b^{-1}$ is again a permutation braid and can be regarded as a braid obtained from $\beta$ by horizontal reflection.
\[
\tau\colon\beta=\vcenter{\hbox{\includegraphics{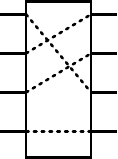}}}\longmapsto
\tau(\beta)=\vcenter{\hbox{\includegraphics{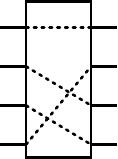}}}
\]

Recall that the complement $\beta^c$ of $\beta$ is defined as $\beta^{-1}\Delta$. It is easy to check that
\[
\beta\beta^c\bar{\beta^c} = \Delta \bar{\beta^c} = \tau(\bar{\beta^c})\Delta = \tau(\bar{\beta^c}) \tau(\beta^c) \beta.
\]
Indeed, the two Legendrian graphs $\sL$ and $\sL'$ defined as 
\[
\sL\coloneqq\sL_{\beta}\sL_{\beta^c \bar{\beta^c}},\quad\text{and}\quad
\sL'\coloneqq\sL_{\tau(\bar{\beta^c})\tau(\beta^c)}\sL_{\beta}
\]
are Legendrian isotopic.
Therefore any Maslov potential $\mu$ on $\sL$ induces a Maslov potential $\mu'$ on $\sL'$ and {\it vice versa}.
Let 
\[
\bL\coloneqq((\sL, \mu), \sC(\sL_{\beta^c \bar{\beta^c}})),
\quad\text{and}\quad
\bL'\coloneqq((\sL', \mu'), \sC(\sL_{\tau(\bar{\beta^c})\tau(\beta^c)})).
\]

\begin{lemma}\label{lemma:delta conjugate}
For any $\phi,\psi\in\cP_{[2n]}^\rho$, there is a weight-preserving bijection between the sets of normal rulings of $\sL\coloneqq\sL_{\beta}\sL_{\beta^c \bar{\beta^c}}$ and $\sL'\coloneqq\sL_{\tau(\bar{\beta^c})\tau(\beta^c)}\sL_{\beta}$.
\begin{align*}
R^\rho_\bL(\phi,\psi)
&\simeq R^\rho_{\bL'}(\phi,\psi)\\
S_\fr&\mapsto S_\fr
\end{align*}
\end{lemma}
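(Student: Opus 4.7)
The plan is to reduce both $\bR^\rho_\bL(\phi,\psi)$ and $\bR^\rho_{\bL'}(\phi,\psi)$ to $\bR^\rho_{(\sL_\beta,\emptyset)}(\phi,\psi)$ by combining Lemma~\ref{lemma:fiber product} with a unique-ruling claim for the fully marked palindromic factors. Concretely, the fiber product lemma gives
\begin{align*}
\bR^\rho_\bL(\phi,\psi) &\simeq \coprod_{\phi'\in\cP^\rho_{[2n]}} \bR^\rho_{(\sL_\beta,\emptyset)}(\phi,\phi') \times_{\phi'} \bR^\rho_{(\sL_{\beta^c\bar{\beta^c}},\,\sC(\sL_{\beta^c\bar{\beta^c}}))}(\phi',\psi),\\
\bR^\rho_{\bL'}(\phi,\psi) &\simeq \coprod_{\psi'\in\cP^\rho_{[2n]}} \bR^\rho_{(\sL_{\tau(\bar{\beta^c})\tau(\beta^c)},\,\sC(\sL_{\tau(\bar{\beta^c})\tau(\beta^c)}))}(\phi,\psi') \times_{\psi'} \bR^\rho_{(\sL_\beta,\emptyset)}(\psi',\psi).
\end{align*}
Since the unmarked crossings of $\bL$ and of $\bL'$ are \emph{both} equal to $\sC(\sL_\beta)$, a switch set $\fr$ has identical meaning on either side; the bijection $S_\fr \mapsto S_\fr$ will be the tautological identification of the $\sL_\beta$-datum after collapsing the palindromic fiber products.

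The central technical claim is the following: for every $\alpha\in\cP^\rho_{[2n]}$, each of the fully marked palindromic factors admits exactly one normal ruling with input matching $\alpha$, and the resulting output matching also equals $\alpha$. Three observations drive this. First, since every crossing of the factor is marked, the switch set must be empty, so the 0-resolution $\Lambda_\fr$ is the original braid diagram itself. Second, the braid $\beta^c\bar{\beta^c}$ is \emph{pure}: reversing a positive braid word inverts its underlying permutation, so the permutations of $\beta^c$ and $\bar{\beta^c}$ are mutual inverses and cancel (the same argument, together with conjugation-invariance of purity under $\tau$, works for $\tau(\bar{\beta^c})\tau(\beta^c)$); hence every strand returns to its starting row. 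Third, the diagram has no cusps, so eyes, left half-eyes and right half-eyes cannot appear in the decomposition; $S_\fr$ must consist entirely of parallels, whose pairing is then forced by $\alpha$, and the output matching is forced to equal $\alpha$ by purity. The $\rho$-graded condition on the output is automatic because the Maslov potentials of the strands are constant from left to right across a braid.

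Granting this unique-ruling claim, both fiber products collapse and force $\phi'=\psi$ in the first decomposition and $\psi'=\phi$ in the second, so that
\[
\bR^\rho_\bL(\phi,\psi) \simeq \bR^\rho_{(\sL_\beta,\emptyset)}(\phi,\psi) \simeq \bR^\rho_{\bL'}(\phi,\psi),
\]
with the composite bijection acting as the identity on the pair $(\fr,\,S_\fr|_{\sL_\beta})$. Weight preservation is automatic: neither $\sL$ nor $\sL'$ has cusps, so $n(S_\fr)=\#\fr$ in both cases, and $\#\fr$ is visibly unchanged by the identification of unmarked crossings.

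The hard part will be a clean verification of the unique-ruling claim, specifically the global consistency of the parallel-only decomposition at every marked crossing, including when strands from different parallels become interleaved. I expect this to reduce to a short induction on the word length of $\beta^c$: at each step, peel off the outermost generator of $\beta^c$ together with its mirror occurrence in $\bar{\beta^c}$ (using palindromicity), check that the local configuration at that mirrored pair of marked crossings affects only the two strands they act upon (so that the remaining matching is untouched), and apply the induction hypothesis to the shorter palindromic sub-braid. The analogous induction handles $\tau(\bar{\beta^c})\tau(\beta^c)$.
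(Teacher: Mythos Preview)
Your overall fiber-product strategy matches the paper's, but the central unique-ruling claim is stated too strongly and is actually false. You assert that for \emph{every} $\alpha\in\cP^\rho_{[2n]}$ the fully marked palindromic factor $\sL_{\beta^c\bar{\beta^c}}$ admits exactly one normal ruling with input matching $\alpha$. Take $b=2$, $\beta=e$, so $\beta^c\bar{\beta^c}=\sigma_1^2$, and $\alpha=\{\{1,2\}\}$: the two paired strands cross twice, so they cannot form a valid parallel (a parallel piece has no self-crossings---this is exactly the content of Remark~\ref{remark:marked Reidemeister move T} and the subsequent discussion), and there is \emph{no} ruling at all. In general, the palindromic factor admits a ruling with input $\alpha$ only when $\alpha$ pairs no two strands that cross in $\beta^c$, and this is a nontrivial restriction on $\alpha$.

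This genuinely breaks your collapse of the fiber product: for $\bL$ the obstruction on the palindromic side constrains $\psi$ (via strands crossing in $\beta^c$), while for $\bL'$ it constrains $\phi$ (via strands crossing in $\tau(\beta^c)$), and you have no argument that these asymmetric restrictions yield the same result after pairing with $\bR^\rho_{(\sL_\beta,\emptyset)}$. The paper closes this gap with the preceding lemma: since both $\sL$ and $\sL'$ contain a full half-twist $\Delta_b$, any $\phi$ or $\psi$ that matches two of the top $b$ strands already forces the whole ruling set to be empty. One may therefore assume every braid strand is paired with a trivial strand, and \emph{under that hypothesis} the marked pure braid does act as the identity, exactly as you hoped. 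Your induction sketch cannot substitute for this step---when you peel off the outermost $\sigma_i$ together with its mirror, the case where strands $i$ and $i{+}1$ are matched in $\alpha$ has no ruling to pass to the shorter word.
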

\begin{proof}
It is easy to see
\[
\beta \beta^c\bar{\beta^c}=\Delta_b\bar{\beta^c}=\tau(\bar{\beta^c})\Delta_b=\tau(\bar{\beta^c})\tau(\beta^c)\beta,
\]
and therefore $\sL_{\beta}\sL_{\beta^c \bar{\beta^c}}=\sL_{\tau(\bar{\beta^c})\tau(\beta^c)}\sL_{\beta}$ as bordered Legendrian links.

For a convenience sake, let 
\begin{align*}
R&\coloneqq R_{\rho,(\phi,\psi)}(\sL_{\beta}\sL_{\beta^c \bar{\beta^c}},\sC({\sL_{\beta^c \bar{\beta^c}}})),\\
R'&\coloneqq R_{\rho,(\phi,\psi)}(\sL_{\tau(\bar{\beta^c})\tau(\beta^c)}\sL_{\beta}, \sC({\sL_{\tau(\bar{\beta^c})\tau(\beta^c)}}))
\end{align*}
Then since both $\sL=\Delta_b \sL_{\bar{\beta^c}}$ and $\sL'=\sL_{\tau(\bar{\beta^c})}\Delta_b$ contain $\Delta_b$, $R$ and $R'$ should be emptyset if $\phi$ or $\psi$ match two braid strands, and so we assume that both $\phi$ and $\psi$ have no $\{i,j\}$ with $i,j\in[b]$.

For these choices of $\phi$ and $\psi$, all crossings of the pure braids $\beta^c \bar{\beta^c}$ and $\tau(\bar{\beta^c})\tau(\beta^c)$ are marked and it plays exactly the same role as the identity. Therefore we have bijections
\[
R\simeq R_{\rho,(\phi,\psi)}(\sL_{\beta},\emptyset)\simeq R'.\qedhere
\]
\end{proof}

The following proposition is equivalent to Theorem~\ref{theorem:invariance for graphs}.
\begin{proposition}
Let $\sL'$ and $\sL''$ be two bordered Legendrian graphs different by one of the Reidemeister move. Then there is a weight-preserving bijection between $\cR_{\rho,(\phi,\psi)}(\sL')$ and $\cR_{\rho,(\phi,\psi)}(\sL'')$ for each $(\phi,\psi)\in\cP_{[\ell]}\times \cP_{[r]}$.
\end{proposition}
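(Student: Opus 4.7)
My plan is to reduce the invariance check to a local verification and then dispatch the Reidemeister moves case by case. By Lemma~\ref{lemma:fiber product}, if $\sL' = \sL_1 \cdot \sL'_0 \cdot \sL_2$ and $\sL'' = \sL_1 \cdot \sL''_0 \cdot \sL_2$ where $\sL'_0, \sL''_0$ differ by a single Reidemeister move, then the ruling sets of $\sL'$ and $\sL''$ are fiber products over the two internal matching sets, with only the middle factor changing. Hence it suffices to produce, for every fixed pair of boundary matchings on the local sub-graph containing the move, a weight-preserving bijection between the two sides of each individual move.

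I would then organize the moves into three groups. \textbf{(A)} Moves not touching a vertex: any collection $\Phi$ of matchings on the unaffected vertices canonically identifies on both sides, so $\sL'^\Phi$ and $\sL''^\Phi$ are marked bordered Legendrian \emph{links} related by the same move. For the unmarked moves (including $(0_a)$, $(0_d)$, $(\mathrm{I})$--$(\mathrm{III})$) this is the Chekanov--Pushkar/Sabloff bijection, which adapts verbatim because a marked crossing is merely a crossing forbidden from being a switch; the classical finite case analyses of decompositions are unaffected. The marked moves $(0)$, $(\mathrm{II})$, $(\mathrm{III})$, $(\mathrm{S})$ are then a direct case analysis of the possible local decomposition patterns around marked crossings and cusps. \textbf{(B)} Move $(\mathrm{T})$: invoke Lemma~\ref{lemma:delta conjugate} with $\beta=\sigma_1$, in which case $\beta^c=e$ in the $2$-braid group, so the marked ``consecutive double'' of the lemma's identity is trivial on rulings, exhibiting the bijection directly. \textbf{(C)} Moves at or adjacent to a vertex (the remaining type-$(0)$ moves together with $(\mathrm{IV})$ and $(\mathrm{V})$): half-edge tracking gives a canonical bijection $\cP^\rho_{\sL'}\simeq \cP^\rho_{\sL''}$, and for every matching $\Phi$ on one side and its image $\Phi'$ on the other I claim $\sL'^\Phi$ and $\sL''^{\Phi'}$ are equivalent as marked bordered Legendrians via a sequence of moves from (A) together with the braid-rewriting $\beta\beta^c\bar{\beta^c}=\tau(\bar\beta^c)\tau(\beta^c)\beta$ supplied by Lemma~\ref{lemma:delta conjugate}, applied to the central block $\sL_\sfv^\times$ of the vertex resolution.

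The main obstacle I anticipate is case (C). A vertex resolution depends sensitively on the matching: different $\phi_\sfv$ produce different permutation braids $\beta$ and different block sizes $(a,b,c)$, so a single vertex Reidemeister move corresponds to a whole family of distinct moves at the level of the resolved Legendrians, one per matching, each requiring verification. My strategy would be to first use Lemma~\ref{lemma:delta conjugate} to slide the feature participating in the move (a crossing, cusp, or second vertex) past the central braid block $\sL_\beta\sL_{\beta^c}\sL_{\bar{\beta^c}}$, reducing the vertex move to a move interacting only with the cusp blocks $\sL_\sfv^\lcusp$ and $\sL_\sfv^\rcusp$; these are built from ordered marked cusps whose commutation relations are matching-independent, and so reduce to finitely many local model calculations handled by (A) and (B). Confirming that the resulting correspondence preserves the non-interlacing and grading conditions, as well as the weight $\wt(S_\fr)$, is mechanical but must be checked for every local model; I expect the braid-rewriting of Lemma~\ref{lemma:delta conjugate} to be exactly the tool that makes the bookkeeping work uniformly across all matchings.
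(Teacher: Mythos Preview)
Your overall strategy --- localize via Lemma~\ref{lemma:fiber product}, handle non-vertex moves by the classical Chekanov--Pushkar bijection extended to marked crossings, and for vertex moves show that the resolutions on the two sides are equivalent as marked bordered Legendrians --- is exactly the paper's approach. The identification of move $(\mathrm{V})$ as the delicate case, and of Lemma~\ref{lemma:delta conjugate} as the key rewriting tool there, is also correct.

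However, you have misallocated the role of Lemma~\ref{lemma:delta conjugate}. For move $(\mathrm{T})$, taking $\beta=\sigma_1$ in the $2$-braid group gives $\beta^c=e$, so the lemma collapses to the tautology $\sL_{\sigma_1}=\sL_{\sigma_1}$ and says nothing about three crossings versus one; move $(\mathrm{T})$ must be verified directly (it is an elementary case check, since marked crossings are never switches). More importantly, for the vertex-adjacent moves $(0_c)$, $(0_e)$, $(0_f)$, and $(\mathrm{IV})$, the vertex type $(\ell,r)$ does not change, so the resolution block is identical on both sides and there is no braid rewriting to do; what is needed is that an \emph{external} feature (an arc, a cusp, or a crossing) commutes past the block. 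Lemma~\ref{lemma:delta conjugate} rewrites the block internally but does not move external features across it. The paper instead proves two commutation lemmas (Lemmas~\ref{lemma:black pure braid commutes} and~\ref{lemma:marked cusp commutes}): a positive pure braid of marked crossings, and a marked cusp, each commute with an external arc, cusp, or crossing. Since any resolution block is a concatenation of marked cusps, a regular permutation braid, and a marked pure braid, these two lemmas plus ordinary marked Reidemeister moves handle $(0)$ and $(\mathrm{IV})$. Lemma~\ref{lemma:delta conjugate} is reserved for move $(\mathrm{V})$, where the vertex type changes from $(\ell,r)$ to $(\ell-1,r+1)$ and the braid block genuinely must be rewritten; the paper treats the two subcases (the migrating arc is a braid strand versus a marked cusp) by an explicit sequence of moves $(\mathrm{I})$, $(\mathrm{II})$, $(0)$, $(\mathrm{S})$ together with Lemmas~\ref{lemma:black pure braid commutes}--\ref{lemma:delta conjugate}.
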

Since invariance under the usual Reidemeister moves is already established, it suffices to consider Reidemeister moves involving vertices. that is, $\rm(0_c)$, $\rm(0_e)$, $\rm(0_f)$, $\rm(IV)$, and $\rm(V)$.
Moreover, since we consider only resolutions of vertices, we need to prove that these Reidemeister moves commute with vertex resolutions.
For example, we need to prove that for each perfect pairing $\phi\in\cP_{[2n]}$ with $2n=\val(\sfv)$, the induced move $\rm(0_c)_*$ below yields a bijection between sets of normal rulings:
\[
\begin{tikzcd}[column sep=3pc]
\vcenter{\hbox{\includegraphics{R0_NV_1.pdf}}}\arrow[r,leftrightarrow,"\rm(0_c)"]\arrow[d,"\phi"'] & \vcenter{\hbox{\includegraphics{R0_NV_2.pdf}}}\arrow[d,"\phi"]\\
\vcenter{\hbox{\tiny
\begingroup%
  \makeatletter%
  \providecommand\color[2][]{%
    \errmessage{(Inkscape) Color is used for the text in Inkscape, but the package 'color.sty' is not loaded}%
    \renewcommand\color[2][]{}%
  }%
  \providecommand\transparent[1]{%
    \errmessage{(Inkscape) Transparency is used (non-zero) for the text in Inkscape, but the package 'transparent.sty' is not loaded}%
    \renewcommand\transparent[1]{}%
  }%
  \providecommand\rotatebox[2]{#2}%
  \ifx\svgwidth\undefined%
    \setlength{\unitlength}{37.49999906bp}%
    \ifx\svgscale\undefined%
      \relax%
    \else%
      \setlength{\unitlength}{\unitlength * \real{\svgscale}}%
    \fi%
  \else%
    \setlength{\unitlength}{\svgwidth}%
  \fi%
  \global\let\svgwidth\undefined%
  \global\let\svgscale\undefined%
  \makeatother%
  \begin{picture}(1,0.72)%
    \put(0,0){\includegraphics[width=\unitlength,page=1]{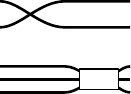}}%
    \put(0.68,0.09000001){\color[rgb]{0,0,0}\makebox(0,0)[lb]{\smash{$\sfv_\phi$}}}%
  \end{picture}%
\endgroup%
}}\ar[r,leftrightarrow,"\rm(0_c)_*"] & \vcenter{\hbox{\tiny
\begingroup%
  \makeatletter%
  \providecommand\color[2][]{%
    \errmessage{(Inkscape) Color is used for the text in Inkscape, but the package 'color.sty' is not loaded}%
    \renewcommand\color[2][]{}%
  }%
  \providecommand\transparent[1]{%
    \errmessage{(Inkscape) Transparency is used (non-zero) for the text in Inkscape, but the package 'transparent.sty' is not loaded}%
    \renewcommand\transparent[1]{}%
  }%
  \providecommand\rotatebox[2]{#2}%
  \ifx\svgwidth\undefined%
    \setlength{\unitlength}{37.49999906bp}%
    \ifx\svgscale\undefined%
      \relax%
    \else%
      \setlength{\unitlength}{\unitlength * \real{\svgscale}}%
    \fi%
  \else%
    \setlength{\unitlength}{\svgwidth}%
  \fi%
  \global\let\svgwidth\undefined%
  \global\let\svgscale\undefined%
  \makeatother%
  \begin{picture}(1,0.72)%
    \put(0,0){\includegraphics[width=\unitlength,page=1]{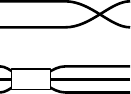}}%
    \put(0.16031087,0.09000001){\color[rgb]{0,0,0}\makebox(0,0)[lb]{\smash{$\sfv_\phi$}}}%
  \end{picture}%
\endgroup%
}}
\end{tikzcd}
\]

From now on, we simply use $(L,B)=(L',B')$ if there is a weight-preserving bijection between sets of normal rulings. 

\begin{lemma}
All marked Reidemeister moves induce a weight-preserving bijection between sets of normal rulings.
In other words, for each move $(\mathrm{M})$ between $(L,B)$ and $(L',B')$, we have
\[
(L,B) = (L',B').
\]
\end{lemma}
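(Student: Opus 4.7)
The plan is to proceed case-by-case through the marked Reidemeister moves depicted in Figure~\ref{figure:marked Reidemeister moves}. Since the set $\bR^\rho_\bL(\phi,\psi)$ and the weight $\wt(S_\fr)$ depend only on local data --- the set $\fr\subset\sC\setminus\sB$ of switches, its decomposition into eyes, half-eyes, and parallels, and the count of cusps in the 0-resolution --- and since each Reidemeister move is strictly local, it suffices to construct, for each move $(L,B)\leftrightarrow(L',B')$, a bijection on rulings whose restriction outside the local region is the identity. The weight check then reduces to verifying that the local contribution to $n(S_\fr)$ is invariant under the move, which is tracked by comparing the numbers of switches, eyes, half-eyes, and cusps inside the affected region.

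For the type $(0)$ moves, the argument is essentially immediate: they slide marked crossings past other crossings, cusps, or vertices without altering the 0-resolution up to planar isotopy, and marked crossings are excluded from $\fr$ by definition, so the identity map on rulings works. The type $(\mathrm{II})$ moves, parameterized by $n\ge 0$, are the first nontrivial case; the key observation is that the two black crossings inside the bigon cannot be switched, so the two bigon strands must traverse the region in parallel, and the $n$ ambient strands pass through without creating or destroying eyes, half-eyes, or cusps. This yields an identity bijection of rulings. For the type $(\mathrm{III})$ moves, I would analyze each of the four marked subcases in turn, enumerating the admissible local decompositions of the triangle region on both sides and invoking the non-interlacing condition at any switched unmarked crossing to rule out forbidden patterns. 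Moves $(\mathrm{S})$ and $(\mathrm{T})$ reduce to similar local enumerations; for $(\mathrm{T})$ one uses Remark~\ref{remark:marked Reidemeister move T} to ensure that the three-crossing pattern and the single-crossing pattern are treated consistently and that no spurious cancellation of black crossings is introduced.

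The hardest part will be the type $(\mathrm{III})$ moves, particularly the mixed subcases in which the central crossing is marked but the side ones are not, or vice versa. Here the non-interlacing condition interacts delicately with the placement of marks, and the bijection must track which strands actually switch through the triangle region. I expect an exhaustive case analysis of roughly half a dozen local ruling patterns on each side of each such move, together with careful bookkeeping of how the Maslov potential constrains the admissible $\rho$-graded configurations. A secondary difficulty is the type $(\mathrm{II})$ move with $n\ge 1$, which is conceptually straightforward but combinatorially verbose, since the $n$ ambient strands may individually participate in parallel, eye, or half-eye structures that must be matched bijectively across the move while preserving the switch and cusp counts.
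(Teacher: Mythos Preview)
Your case-by-case plan is essentially the approach the paper has in mind; in fact the paper's own proof reads in its entirety ``This is easy to check and we omit the proof.'' So your proposal is correct and strictly more detailed than what the paper provides.

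One small comment: your description of the marked type~$(\mathrm{II})$ move as a ``bigon'' with two black crossings is not quite how the paper sets it up. The family of moves in Figure~\ref{figure:marked Reidemeister moves} labeled $(\mathrm{II})$ involves a cusp together with a column of $n$ marked crossings (compare the ``marked cusp'' pictures in Figure~\ref{figure:marked cusps} and the later Lemma~\ref{lemma:marked cusp commutes}), rather than the creation or destruction of an ordinary Reidemeister~II bigon. The local analysis you describe still goes through --- marked crossings are barred from $\fr$, so the strands pass through rigidly and the cusp count is unchanged --- but you should adjust the picture you are carrying in your head before writing out the details.
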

\begin{proof}
This is easy to check and we omit the proof.
\end{proof}

As seen in Remark~\ref{remark:marked Reidemeister move T}, the diagram with two markings is not the same as the diagram without any crossings.
Indeed, the difference between no crossings and two markings is whether the eye involving two strands is allowed or not.
\[
({\rm O})\quad\vcenter{\hbox{\includegraphics{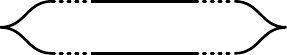}}}\qquad\qquad
({\rm X})\quad\vcenter{\hbox{\includegraphics{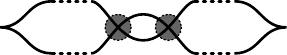}}}
\]

One of the direct consequence is that the moves similar to Reidemeister moves $(0)$ and $\rm(II)$ holds for two consecutive markings in the following sense:
\begin{align*}
\vcenter{\hbox{\includegraphics{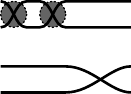}}}&=
\vcenter{\hbox{\includegraphics{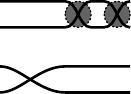}}}&
\vcenter{\hbox{\includegraphics{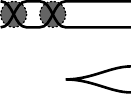}}}&=
\vcenter{\hbox{\includegraphics{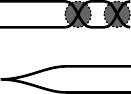}}}&
\vcenter{\hbox{\includegraphics{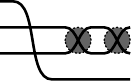}}}&=
\vcenter{\hbox{\includegraphics{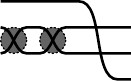}}}
\end{align*}

In general, any positive pure braid $\beta$ consisting of markings has the similar property.

\begin{lemma}\label{lemma:black pure braid commutes}
Suppose that $\beta$ is a positive pure braid consisting of markings. The following holds.
\begin{align*}
\vcenter{\hbox{
\begingroup%
  \makeatletter%
  \providecommand\color[2][]{%
    \errmessage{(Inkscape) Color is used for the text in Inkscape, but the package 'color.sty' is not loaded}%
    \renewcommand\color[2][]{}%
  }%
  \providecommand\transparent[1]{%
    \errmessage{(Inkscape) Transparency is used (non-zero) for the text in Inkscape, but the package 'transparent.sty' is not loaded}%
    \renewcommand\transparent[1]{}%
  }%
  \providecommand\rotatebox[2]{#2}%
  \ifx\svgwidth\undefined%
    \setlength{\unitlength}{37.50000006bp}%
    \ifx\svgscale\undefined%
      \relax%
    \else%
      \setlength{\unitlength}{\unitlength * \real{\svgscale}}%
    \fi%
  \else%
    \setlength{\unitlength}{\svgwidth}%
  \fi%
  \global\let\svgwidth\undefined%
  \global\let\svgscale\undefined%
  \makeatother%
  \begin{picture}(1,1.21999998)%
    \put(0,0){\includegraphics[width=\unitlength,page=1]{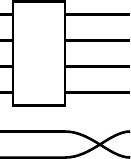}}%
    \put(0.19999999,0.80999999){\color[rgb]{0,0,0}\makebox(0,0)[lb]{\smash{$\beta$}}}%
  \end{picture}%
\endgroup%
}}&=
\vcenter{\hbox{
\begingroup%
  \makeatletter%
  \providecommand\color[2][]{%
    \errmessage{(Inkscape) Color is used for the text in Inkscape, but the package 'color.sty' is not loaded}%
    \renewcommand\color[2][]{}%
  }%
  \providecommand\transparent[1]{%
    \errmessage{(Inkscape) Transparency is used (non-zero) for the text in Inkscape, but the package 'transparent.sty' is not loaded}%
    \renewcommand\transparent[1]{}%
  }%
  \providecommand\rotatebox[2]{#2}%
  \ifx\svgwidth\undefined%
    \setlength{\unitlength}{37.50000006bp}%
    \ifx\svgscale\undefined%
      \relax%
    \else%
      \setlength{\unitlength}{\unitlength * \real{\svgscale}}%
    \fi%
  \else%
    \setlength{\unitlength}{\svgwidth}%
  \fi%
  \global\let\svgwidth\undefined%
  \global\let\svgscale\undefined%
  \makeatother%
  \begin{picture}(1,1.21999997)%
    \put(0,0){\includegraphics[width=\unitlength,page=1]{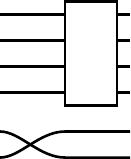}}%
    \put(0.59542886,0.80999998){\color[rgb]{0,0,0}\makebox(0,0)[lb]{\smash{$\beta$}}}%
  \end{picture}%
\endgroup%
}}&
\vcenter{\hbox{
\begingroup%
  \makeatletter%
  \providecommand\color[2][]{%
    \errmessage{(Inkscape) Color is used for the text in Inkscape, but the package 'color.sty' is not loaded}%
    \renewcommand\color[2][]{}%
  }%
  \providecommand\transparent[1]{%
    \errmessage{(Inkscape) Transparency is used (non-zero) for the text in Inkscape, but the package 'transparent.sty' is not loaded}%
    \renewcommand\transparent[1]{}%
  }%
  \providecommand\rotatebox[2]{#2}%
  \ifx\svgwidth\undefined%
    \setlength{\unitlength}{37.50000006bp}%
    \ifx\svgscale\undefined%
      \relax%
    \else%
      \setlength{\unitlength}{\unitlength * \real{\svgscale}}%
    \fi%
  \else%
    \setlength{\unitlength}{\svgwidth}%
  \fi%
  \global\let\svgwidth\undefined%
  \global\let\svgscale\undefined%
  \makeatother%
  \begin{picture}(1,1.21999998)%
    \put(0,0){\includegraphics[width=\unitlength,page=1]{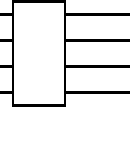}}%
    \put(0.19999999,0.80999999){\color[rgb]{0,0,0}\makebox(0,0)[lb]{\smash{$\beta$}}}%
    \put(0,0){\includegraphics[width=\unitlength,page=2]{R0_BrC_1.pdf}}%
  \end{picture}%
\endgroup%
}}&=
\vcenter{\hbox{
\begingroup%
  \makeatletter%
  \providecommand\color[2][]{%
    \errmessage{(Inkscape) Color is used for the text in Inkscape, but the package 'color.sty' is not loaded}%
    \renewcommand\color[2][]{}%
  }%
  \providecommand\transparent[1]{%
    \errmessage{(Inkscape) Transparency is used (non-zero) for the text in Inkscape, but the package 'transparent.sty' is not loaded}%
    \renewcommand\transparent[1]{}%
  }%
  \providecommand\rotatebox[2]{#2}%
  \ifx\svgwidth\undefined%
    \setlength{\unitlength}{37.50000006bp}%
    \ifx\svgscale\undefined%
      \relax%
    \else%
      \setlength{\unitlength}{\unitlength * \real{\svgscale}}%
    \fi%
  \else%
    \setlength{\unitlength}{\svgwidth}%
  \fi%
  \global\let\svgwidth\undefined%
  \global\let\svgscale\undefined%
  \makeatother%
  \begin{picture}(1,1.21999997)%
    \put(0,0){\includegraphics[width=\unitlength,page=1]{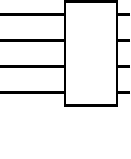}}%
    \put(0.59542886,0.80999998){\color[rgb]{0,0,0}\makebox(0,0)[lb]{\smash{$\beta$}}}%
    \put(0,0){\includegraphics[width=\unitlength,page=2]{R0_BrC_2.pdf}}%
  \end{picture}%
\endgroup%
}}&
\vcenter{\hbox{
\begingroup%
  \makeatletter%
  \providecommand\color[2][]{%
    \errmessage{(Inkscape) Color is used for the text in Inkscape, but the package 'color.sty' is not loaded}%
    \renewcommand\color[2][]{}%
  }%
  \providecommand\transparent[1]{%
    \errmessage{(Inkscape) Transparency is used (non-zero) for the text in Inkscape, but the package 'transparent.sty' is not loaded}%
    \renewcommand\transparent[1]{}%
  }%
  \providecommand\rotatebox[2]{#2}%
  \ifx\svgwidth\undefined%
    \setlength{\unitlength}{37.50000006bp}%
    \ifx\svgscale\undefined%
      \relax%
    \else%
      \setlength{\unitlength}{\unitlength * \real{\svgscale}}%
    \fi%
  \else%
    \setlength{\unitlength}{\svgwidth}%
  \fi%
  \global\let\svgwidth\undefined%
  \global\let\svgscale\undefined%
  \makeatother%
  \begin{picture}(1,1.02)%
    \put(0,0){\includegraphics[width=\unitlength,page=1]{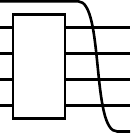}}%
    \put(0.19999999,0.50999999){\color[rgb]{0,0,0}\makebox(0,0)[lb]{\smash{$\beta$}}}%
  \end{picture}%
\endgroup%
}}&=
\vcenter{\hbox{
\begingroup%
  \makeatletter%
  \providecommand\color[2][]{%
    \errmessage{(Inkscape) Color is used for the text in Inkscape, but the package 'color.sty' is not loaded}%
    \renewcommand\color[2][]{}%
  }%
  \providecommand\transparent[1]{%
    \errmessage{(Inkscape) Transparency is used (non-zero) for the text in Inkscape, but the package 'transparent.sty' is not loaded}%
    \renewcommand\transparent[1]{}%
  }%
  \providecommand\rotatebox[2]{#2}%
  \ifx\svgwidth\undefined%
    \setlength{\unitlength}{37.50000006bp}%
    \ifx\svgscale\undefined%
      \relax%
    \else%
      \setlength{\unitlength}{\unitlength * \real{\svgscale}}%
    \fi%
  \else%
    \setlength{\unitlength}{\svgwidth}%
  \fi%
  \global\let\svgwidth\undefined%
  \global\let\svgscale\undefined%
  \makeatother%
  \begin{picture}(1,1.02)%
    \put(0,0){\includegraphics[width=\unitlength,page=1]{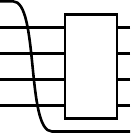}}%
    \put(0.58925607,0.50736269){\color[rgb]{0,0,0}\makebox(0,0)[lb]{\smash{$\beta$}}}%
  \end{picture}%
\endgroup%
}}
\end{align*}
\end{lemma}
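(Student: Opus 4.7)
The plan is to prove the three equalities by induction on the number of marked crossings in the positive pure braid $\beta$, with trivial base case $\beta = \emptyset$. For the inductive step, since $\beta$ is positive, pure, and consists entirely of markings, I would factor off an elementary pure piece at one end of the diagram --- concretely a marked full twist $\sigma_i^2$ on some pair of adjacent strands, which can be slid to one end of $\beta$ via the marked Reidemeister III moves from Figure~\ref{figure:marked Reidemeister moves}. This reduces the problem to the case $\beta = \sigma_i^2$, where only two marked crossings are involved between adjacent strands.

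For the base case $\beta = \sigma_i^2$: the first equality, in which a regular strand passes through the marked pair, follows by applying the marked Reidemeister III moves displayed in Figure~\ref{figure:marked Reidemeister moves} --- specifically the variants with two marked crossings and one normal crossing --- which slide the normal crossing across the marked pair. The second equality, involving a cusp interacting with the marked pair, is handled analogously by combining the marked $(0)$ moves between cusps and marked crossings. The third equality, which is of $R2$ type, reduces immediately to the already-observed identity between two adjacent marked crossings and the trivial braid displayed just above the lemma. Iterating the base case along the factorization $\beta = \beta_0 \cdot \sigma_i^2$ and invoking the inductive hypothesis on $\beta_0$ then completes the step: purity of $\beta_0$ guarantees that the strand (or cusp) emerges from each elementary move on the correct pair of strands, so the hypothesis applies.

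The main obstacle will be verifying that the bijection constructed above is genuinely weight-preserving, i.e.\ that it preserves the integer $n(S_\fr) = \#(\fr) - \tfrac{1}{2}(\#\lcusp_{\sL_\fr} + \#\rcusp_{\sL_\fr})$. This is a purely local check at the base case and rests on three observations: all crossings of $\beta$ belong to $\sB$ and therefore cannot lie in $\fr$, so they do not contribute to $\#(\fr)$; any putative eye or half-eye straddling $\beta$ is forbidden by the marking convention on both sides of the equality (the ``O vs X'' dichotomy displayed before the lemma); and since $\beta$ is pure, the matchings at its two borders are forced to coincide, so the non-interlacing condition is satisfied identically on both sides. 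Combining these remarks, the number of switches, eyes, half-eyes, and cusps is unchanged by each elementary move, and the weighted sets of $\rho$-graded normal rulings agree on the nose.
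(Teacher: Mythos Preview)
The paper disposes of this lemma with the single line ``This is obvious,'' and the intended argument is indeed immediate once one recalls what a ruling sees inside $\beta$. Every crossing of $\beta$ is marked, hence lies in $\sB$ and can never belong to the switching set $\fr\subset\sC\setminus\sB$; inside $\beta$ the strands therefore simply pass through one another without being resolved. Because $\beta$ is \emph{pure}, each strand exits $\beta$ at the height at which it entered, so the companion pairing carried by any ruling is identical on the two borders of $\beta$. The block $\beta$ is thus transparent to ruling data, and sliding the extra crossing or cusp from one side of $\beta$ to the other is literally the identity map on rulings. Your observations in the final paragraph --- marked crossings contribute nothing to $\#\fr$, purity forces the boundary matchings to agree, no non-interlacing condition is imposed at unswitched crossings --- already constitute this argument in full; what is superfluous is the inductive superstructure you build on top of it.

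That superstructure in fact contains a genuine gap. Your induction requires that any nontrivial positive pure braid can be rewritten, via marked Reidemeister~III moves, so that some $\sigma_i^{\,2}$ sits at one end. This fails already on three strands: the positive pure braid $\sigma_1\sigma_2^{\,2}\sigma_1$ is rigid in the positive braid monoid, since no subword of the form $\sigma_i\sigma_{i\pm1}\sigma_i$ occurs and hence no braid relation (and no marked move of type~(III), (T), or~(0)) applies. It neither begins nor ends with a square, so your inductive step cannot be taken. The remedy is not a cleverer factorisation but simply the direct argument above, which uses purity once, globally, rather than attempting to peel off elementary pure pieces.
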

\begin{proof}
This is obvious.
\end{proof}

On the other hand, marked cusps with markings have exactly the same property.

\begin{lemma}\label{lemma:marked cusp commutes}
Let $n\ge 0$. Then the following holds.
\begin{align*}
\vcenter{\hbox{\def\svgscale{0.9}
\begingroup%
  \makeatletter%
  \providecommand\color[2][]{%
    \errmessage{(Inkscape) Color is used for the text in Inkscape, but the package 'color.sty' is not loaded}%
    \renewcommand\color[2][]{}%
  }%
  \providecommand\transparent[1]{%
    \errmessage{(Inkscape) Transparency is used (non-zero) for the text in Inkscape, but the package 'transparent.sty' is not loaded}%
    \renewcommand\transparent[1]{}%
  }%
  \providecommand\rotatebox[2]{#2}%
  \ifx\svgwidth\undefined%
    \setlength{\unitlength}{44.01742442bp}%
    \ifx\svgscale\undefined%
      \relax%
    \else%
      \setlength{\unitlength}{\unitlength * \real{\svgscale}}%
    \fi%
  \else%
    \setlength{\unitlength}{\svgwidth}%
  \fi%
  \global\let\svgwidth\undefined%
  \global\let\svgscale\undefined%
  \makeatother%
  \begin{picture}(1,0.9541676)%
    \put(0,0){\includegraphics[width=\unitlength,page=1]{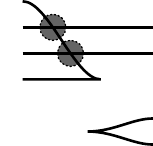}}%
    \put(-0.15707173,0.64902402){\color[rgb]{0,0,0}\makebox(0,0)[lb]{\smash{$n\left\{\vphantom{\rule{1cm}{.7pc}}\right.$}}}%
  \end{picture}%
\endgroup%
}}&=\ 
\vcenter{\hbox{\def\svgscale{0.9}
\begingroup%
  \makeatletter%
  \providecommand\color[2][]{%
    \errmessage{(Inkscape) Color is used for the text in Inkscape, but the package 'color.sty' is not loaded}%
    \renewcommand\color[2][]{}%
  }%
  \providecommand\transparent[1]{%
    \errmessage{(Inkscape) Transparency is used (non-zero) for the text in Inkscape, but the package 'transparent.sty' is not loaded}%
    \renewcommand\transparent[1]{}%
  }%
  \providecommand\rotatebox[2]{#2}%
  \ifx\svgwidth\undefined%
    \setlength{\unitlength}{44.01742448bp}%
    \ifx\svgscale\undefined%
      \relax%
    \else%
      \setlength{\unitlength}{\unitlength * \real{\svgscale}}%
    \fi%
  \else%
    \setlength{\unitlength}{\svgwidth}%
  \fi%
  \global\let\svgwidth\undefined%
  \global\let\svgscale\undefined%
  \makeatother%
  \begin{picture}(1,0.95416759)%
    \put(0,0){\includegraphics[width=\unitlength,page=1]{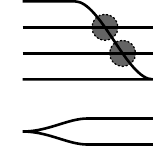}}%
    \put(-0.15707173,0.64902402){\color[rgb]{0,0,0}\makebox(0,0)[lb]{\smash{$n\left\{\vphantom{\rule{1cm}{.7pc}}\right.$}}}%
  \end{picture}%
\endgroup%
}}&
\vcenter{\hbox{\def\svgscale{0.9}
\begingroup%
  \makeatletter%
  \providecommand\color[2][]{%
    \errmessage{(Inkscape) Color is used for the text in Inkscape, but the package 'color.sty' is not loaded}%
    \renewcommand\color[2][]{}%
  }%
  \providecommand\transparent[1]{%
    \errmessage{(Inkscape) Transparency is used (non-zero) for the text in Inkscape, but the package 'transparent.sty' is not loaded}%
    \renewcommand\transparent[1]{}%
  }%
  \providecommand\rotatebox[2]{#2}%
  \ifx\svgwidth\undefined%
    \setlength{\unitlength}{44.01742348bp}%
    \ifx\svgscale\undefined%
      \relax%
    \else%
      \setlength{\unitlength}{\unitlength * \real{\svgscale}}%
    \fi%
  \else%
    \setlength{\unitlength}{\svgwidth}%
  \fi%
  \global\let\svgwidth\undefined%
  \global\let\svgscale\undefined%
  \makeatother%
  \begin{picture}(1,0.95416761)%
    \put(0,0){\includegraphics[width=\unitlength,page=1]{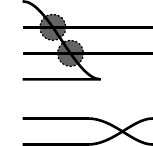}}%
    \put(-0.15707173,0.64902402){\color[rgb]{0,0,0}\makebox(0,0)[lb]{\smash{$n\left\{\vphantom{\rule{1cm}{.7pc}}\right.$}}}%
  \end{picture}%
\endgroup%
}}&=\ 
\vcenter{\hbox{\def\svgscale{0.9}
\begingroup%
  \makeatletter%
  \providecommand\color[2][]{%
    \errmessage{(Inkscape) Color is used for the text in Inkscape, but the package 'color.sty' is not loaded}%
    \renewcommand\color[2][]{}%
  }%
  \providecommand\transparent[1]{%
    \errmessage{(Inkscape) Transparency is used (non-zero) for the text in Inkscape, but the package 'transparent.sty' is not loaded}%
    \renewcommand\transparent[1]{}%
  }%
  \providecommand\rotatebox[2]{#2}%
  \ifx\svgwidth\undefined%
    \setlength{\unitlength}{44.01742448bp}%
    \ifx\svgscale\undefined%
      \relax%
    \else%
      \setlength{\unitlength}{\unitlength * \real{\svgscale}}%
    \fi%
  \else%
    \setlength{\unitlength}{\svgwidth}%
  \fi%
  \global\let\svgwidth\undefined%
  \global\let\svgscale\undefined%
  \makeatother%
  \begin{picture}(1,0.9541676)%
    \put(0,0){\includegraphics[width=\unitlength,page=1]{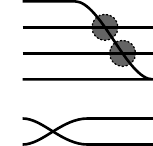}}%
    \put(-0.15707173,0.64902402){\color[rgb]{0,0,0}\makebox(0,0)[lb]{\smash{$n\left\{\vphantom{\rule{1cm}{.7pc}}\right.$}}}%
  \end{picture}%
\endgroup%
}}&
\vcenter{\hbox{\def\svgscale{0.9}
\begingroup%
  \makeatletter%
  \providecommand\color[2][]{%
    \errmessage{(Inkscape) Color is used for the text in Inkscape, but the package 'color.sty' is not loaded}%
    \renewcommand\color[2][]{}%
  }%
  \providecommand\transparent[1]{%
    \errmessage{(Inkscape) Transparency is used (non-zero) for the text in Inkscape, but the package 'transparent.sty' is not loaded}%
    \renewcommand\transparent[1]{}%
  }%
  \providecommand\rotatebox[2]{#2}%
  \ifx\svgwidth\undefined%
    \setlength{\unitlength}{44.01742448bp}%
    \ifx\svgscale\undefined%
      \relax%
    \else%
      \setlength{\unitlength}{\unitlength * \real{\svgscale}}%
    \fi%
  \else%
    \setlength{\unitlength}{\svgwidth}%
  \fi%
  \global\let\svgwidth\undefined%
  \global\let\svgscale\undefined%
  \makeatother%
  \begin{picture}(1,0.86897406)%
    \put(0,0){\includegraphics[width=\unitlength,page=1]{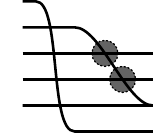}}%
    \put(-0.15707173,0.3834434){\color[rgb]{0,0,0}\makebox(0,0)[lb]{\smash{$n\left\{\vphantom{\rule{1cm}{.7pc}}\right.$}}}%
  \end{picture}%
\endgroup%
}}&=\ 
\vcenter{\hbox{\def\svgscale{0.9}
\begingroup%
  \makeatletter%
  \providecommand\color[2][]{%
    \errmessage{(Inkscape) Color is used for the text in Inkscape, but the package 'color.sty' is not loaded}%
    \renewcommand\color[2][]{}%
  }%
  \providecommand\transparent[1]{%
    \errmessage{(Inkscape) Transparency is used (non-zero) for the text in Inkscape, but the package 'transparent.sty' is not loaded}%
    \renewcommand\transparent[1]{}%
  }%
  \providecommand\rotatebox[2]{#2}%
  \ifx\svgwidth\undefined%
    \setlength{\unitlength}{44.01742448bp}%
    \ifx\svgscale\undefined%
      \relax%
    \else%
      \setlength{\unitlength}{\unitlength * \real{\svgscale}}%
    \fi%
  \else%
    \setlength{\unitlength}{\svgwidth}%
  \fi%
  \global\let\svgwidth\undefined%
  \global\let\svgscale\undefined%
  \makeatother%
  \begin{picture}(1,0.86897406)%
    \put(0,0){\includegraphics[width=\unitlength,page=1]{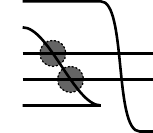}}%
    \put(-0.15707173,0.3834434){\color[rgb]{0,0,0}\makebox(0,0)[lb]{\smash{$n\left\{\vphantom{\rule{1cm}{.7pc}}\right.$}}}%
  \end{picture}%
\endgroup%
}}
\end{align*}
\end{lemma}
\begin{proof}
This is obvious.
\end{proof}

\begin{corollary}
All Reidemeister moves of types $(0)$ and $\rm(IV)$ induce weight-preserving bijections between sets of normal rulings.
\end{corollary}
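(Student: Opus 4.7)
The plan is to treat separately each Reidemeister move of type $(0)$ and of type $(\mathrm{IV})$, distinguishing whether the move involves a vertex or not. When no vertex is involved (moves $(0_a)$, $(0_b)$, $(0_d)$, and the vertex-free instance of $(\mathrm{IV})$), the vertex-resolution construction is unaffected, and the bijection follows from the classical invariance of normal rulings for bordered Legendrian links combined with the marked-Reidemeister invariance already noted. So the real work is confined to the moves $(0_c)$, $(0_e)$, $(0_f)$ and to the $(\mathrm{IV})$ move in which a strand crosses a vertex.

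For each such move and each global choice of $\rho$-graded perfect matching $\Phi \in \cP^\rho_\bL$, I would apply the resolution at every vertex on both sides of the move. Recall that near a vertex $\sfv$ of type $(\ell,r)$ with matching $\phi$, the resolution consists of three pieces arranged horizontally: the marked right cusps $\sL_\sfv^\rcusp$, the braid region $\sL_\sfv^\times = \sL_\beta \cdot \sL_{\beta^c} \cdot \sL_{\bar{\beta^c}}$ in which $\beta$ is an unmarked permutation braid while $\beta^c \bar{\beta^c}$ is a marked positive pure braid, and finally the marked left cusps $\sL_\sfv^\lcusp$. A Reidemeister move of type $(0)$ or $(\mathrm{IV})$ near $\sfv$ affects only the region outside a small horizontal interval containing $\sfv$, so after resolution the two sides of the move differ only in how surrounding arcs, cusps, or crossings are positioned relative to the block $\sL_\sfv^\rcusp \cdot \sL_\sfv^\times \cdot \sL_\sfv^\lcusp$.

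The key observation is then that each such rearrangement is already covered by one of the previous lemmas. Specifically, for $(0_c)$ and $(0_e)$ the effect is to push an unmarked arc or a cusp past the marked cusps of $\sL_\sfv^\rcusp$ or $\sL_\sfv^\lcusp$, which is exactly the content of Lemma~\ref{lemma:marked cusp commutes}; similarly, passing it across the marked pure braid factor $\beta^c\bar{\beta^c}$ inside $\sL_\sfv^\times$ is the content of Lemma~\ref{lemma:black pure braid commutes}. The move $(0_f)$ simply swaps the horizontal positions of two vertex resolutions occurring in disjoint vertical strips, so it gives a bijection on the nose. The remaining unmarked permutation braid factor $\beta$ behaves like an ordinary Legendrian braid, and passing arcs or cusps across it (for $(0_c)$, $(0_e)$ or $(\mathrm{IV})$) is handled by the classical Reidemeister invariance for links.

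The step I expect to be most delicate is the $(\mathrm{IV})$ move where a strand crosses a vertex, because the vertex resolution is inhomogeneous: it mixes marked cusps, an unmarked permutation braid, and a marked pure braid, and the strand must be pushed across all three pieces. My plan for this case is to decompose the sliding into three successive steps---across $\sL_\sfv^\rcusp$, across $\sL_\sfv^\times$, and across $\sL_\sfv^\lcusp$---invoking Lemma~\ref{lemma:marked cusp commutes} for the outer factors, Lemma~\ref{lemma:black pure braid commutes} for the marked portion of $\sL_\sfv^\times$, and, if needed, Lemma~\ref{lemma:delta conjugate} to reposition the unmarked $\beta$ relative to the marked $\beta^c\bar{\beta^c}$ before applying the classical $(\mathrm{IV})$ invariance to $\beta$. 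The composition of these three weight-preserving bijections will furnish the desired bijection between $\bR^\rho_{\sL'}(\phi,\psi)$ and $\bR^\rho_{\sL''}(\phi,\psi)$, completing the corollary.
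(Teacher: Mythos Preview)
Your approach is essentially the same as the paper's: decompose each vertex resolution into marked cusps, an unmarked permutation braid, and a marked pure braid, and then use Lemmas~\ref{lemma:black pure braid commutes} and~\ref{lemma:marked cusp commutes} (together with ordinary Reidemeister invariance on the unmarked braid factor) to commute cusps, crossings, and passing arcs past each piece. The only unnecessary ingredient in your outline is the contingency of invoking Lemma~\ref{lemma:delta conjugate} for move $(\mathrm{IV})$; the paper does not need it here and reserves it solely for move $(\mathrm{V})$.
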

\begin{proof}
Any resolution of a vertex is a product of marked left cusps with markings, a normal braid, a pure braid with markings and marked right cusps, and these commute with cusps, regular crossings, and long arc passing over (or under) the vertex by Lemma~\ref{lemma:black pure braid commutes} and Lemma~\ref{lemma:marked cusp commutes}.
\end{proof}

\subsection{\texorpdfstring{Reidemeister move $\rm(V)$}{The fourth Reidemeister move}}
Now, let us consider the Reidemeister move $\rm(V)$.
\[
\vcenter{\hbox{\includegraphics{R5_1.pdf}}}\stackrel{\rm(V)}\longleftrightarrow
\vcenter{\hbox{\includegraphics{R5_1_2.pdf}}}
\]

Let $\sfv$ be a vertex of type $(\ell,r)$ with $\ell+r=2n$, and $\phi\in\cP_{[2n]}$ with $\val\sfv = 2n$.
We denote the top-left arc by $\alpha$.
Then according to where the top-left arc $\alpha$ is matched, we have two cases:
\begin{enumerate}
\item $\alpha$ is a {\em strand} of a braid if it matches with an arc in the right, or
\[
\vcenter{\hbox{\includegraphics{R5_1.pdf}}}\stackrel{\phi}\longrightarrow
\vcenter{\hbox{
\begingroup%
  \makeatletter%
  \providecommand\color[2][]{%
    \errmessage{(Inkscape) Color is used for the text in Inkscape, but the package 'color.sty' is not loaded}%
    \renewcommand\color[2][]{}%
  }%
  \providecommand\transparent[1]{%
    \errmessage{(Inkscape) Transparency is used (non-zero) for the text in Inkscape, but the package 'transparent.sty' is not loaded}%
    \renewcommand\transparent[1]{}%
  }%
  \providecommand\rotatebox[2]{#2}%
  \ifx\svgwidth\undefined%
    \setlength{\unitlength}{157.50000148bp}%
    \ifx\svgscale\undefined%
      \relax%
    \else%
      \setlength{\unitlength}{\unitlength * \real{\svgscale}}%
    \fi%
  \else%
    \setlength{\unitlength}{\svgwidth}%
  \fi%
  \global\let\svgwidth\undefined%
  \global\let\svgscale\undefined%
  \makeatother%
  \begin{picture}(1,0.29047638)%
    \put(0,0){\includegraphics[width=\unitlength,page=1]{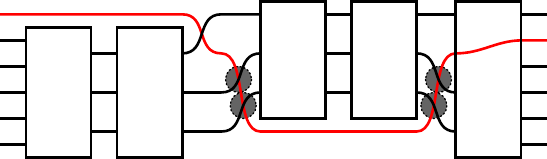}}%
    \put(0.07142858,0.07380969){\color[rgb]{0,0,0}\makebox(0,0)[lb]{\smash{$\sL'_L$}}}%
    \put(0.85714279,0.07380969){\color[rgb]{0,0,0}\makebox(0,0)[lb]{\smash{$\sL_R$}}}%
    \put(0.23809524,0.07380969){\color[rgb]{0,0,0}\makebox(0,0)[lb]{\smash{$\sL'_{\beta}$}}}%
    \put(0.49999987,0.12143002){\color[rgb]{0,0,0}\makebox(0,0)[lb]{\smash{$\sL'_{\beta^c}$}}}%
    \put(0.66666669,0.12142892){\color[rgb]{0,0,0}\makebox(0,0)[lb]{\smash{$\sL'_{\bar{\beta^c}}$}}}%
  \end{picture}%
\endgroup%
}}
\]
\item $\alpha$ is a marked cusp contained in $\sL_L$.
\end{enumerate}
\[
\vcenter{\hbox{\includegraphics{R5_1.pdf}}}\stackrel{\phi}\longrightarrow
\vcenter{\hbox{
\begingroup%
  \makeatletter%
  \providecommand\color[2][]{%
    \errmessage{(Inkscape) Color is used for the text in Inkscape, but the package 'color.sty' is not loaded}%
    \renewcommand\color[2][]{}%
  }%
  \providecommand\transparent[1]{%
    \errmessage{(Inkscape) Transparency is used (non-zero) for the text in Inkscape, but the package 'transparent.sty' is not loaded}%
    \renewcommand\transparent[1]{}%
  }%
  \providecommand\rotatebox[2]{#2}%
  \ifx\svgwidth\undefined%
    \setlength{\unitlength}{149.9999997bp}%
    \ifx\svgscale\undefined%
      \relax%
    \else%
      \setlength{\unitlength}{\unitlength * \real{\svgscale}}%
    \fi%
  \else%
    \setlength{\unitlength}{\svgwidth}%
  \fi%
  \global\let\svgwidth\undefined%
  \global\let\svgscale\undefined%
  \makeatother%
  \begin{picture}(1,0.33000004)%
    \put(0,0){\includegraphics[width=\unitlength,page=1]{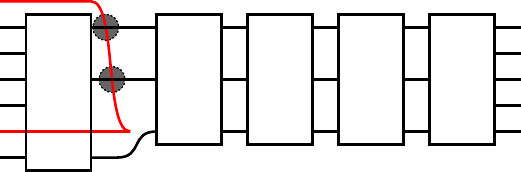}}%
    \put(0.075,0.07749998){\color[rgb]{0,0,0}\makebox(0,0)[lb]{\smash{$\sL'_L$}}}%
    \put(0.325,0.12750017){\color[rgb]{0,0,0}\makebox(0,0)[lb]{\smash{$\sL_{\beta}$}}}%
    \put(0.49999999,0.12750017){\color[rgb]{0,0,0}\makebox(0,0)[lb]{\smash{$\sL_{\beta^c}$}}}%
    \put(0.84999993,0.12750017){\color[rgb]{0,0,0}\makebox(0,0)[lb]{\smash{$\sL_R$}}}%
    \put(0.67499998,0.12750075){\color[rgb]{0,0,0}\makebox(0,0)[lb]{\smash{$\sL_{\bar{\beta^c}}$}}}%
  \end{picture}%
\endgroup%
}}
\]

Suppose that $\alpha$ is a strand of a braid. Then without loss of any generalities, we may assume that $\sL_L'$ is trivial, and the proof is as follows:
\begin{enumerate}
\item Make a small kink by using $\rm(I)$ on the top-left arc between $\sL_{\beta}$ and $\sL_{\beta^c}$.
\begin{align*}
\sfv_\phi=\vcenter{\hbox{}}
&\stackrel{\rm(I)}\longrightarrow
\vcenter{\hbox{
\begingroup%
  \makeatletter%
  \providecommand\color[2][]{%
    \errmessage{(Inkscape) Color is used for the text in Inkscape, but the package 'color.sty' is not loaded}%
    \renewcommand\color[2][]{}%
  }%
  \providecommand\transparent[1]{%
    \errmessage{(Inkscape) Transparency is used (non-zero) for the text in Inkscape, but the package 'transparent.sty' is not loaded}%
    \renewcommand\transparent[1]{}%
  }%
  \providecommand\rotatebox[2]{#2}%
  \ifx\svgwidth\undefined%
    \setlength{\unitlength}{172.50000238bp}%
    \ifx\svgscale\undefined%
      \relax%
    \else%
      \setlength{\unitlength}{\unitlength * \real{\svgscale}}%
    \fi%
  \else%
    \setlength{\unitlength}{\svgwidth}%
  \fi%
  \global\let\svgwidth\undefined%
  \global\let\svgscale\undefined%
  \makeatother%
  \begin{picture}(1,0.26521756)%
    \put(0,0){\includegraphics[width=\unitlength,page=1]{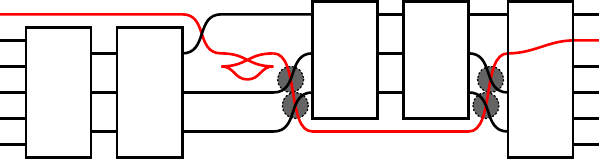}}%
    \put(0.0652174,0.06739146){\color[rgb]{0,0,0}\makebox(0,0)[lb]{\smash{$\sL'_L$}}}%
    \put(0.86956517,0.06739146){\color[rgb]{0,0,0}\makebox(0,0)[lb]{\smash{$\sL_R$}}}%
    \put(0.2173913,0.06739146){\color[rgb]{0,0,0}\makebox(0,0)[lb]{\smash{$\sL'_{\beta}$}}}%
    \put(0.54347816,0.11087089){\color[rgb]{0,0,0}\makebox(0,0)[lb]{\smash{$\sL'_{\beta^c}$}}}%
    \put(0.69565222,0.11086989){\color[rgb]{0,0,0}\makebox(0,0)[lb]{\smash{$\sL'_{\bar{\beta^c}}$}}}%
  \end{picture}%
\endgroup%
}}
\end{align*}
\item Pull-down the kink by using $\rm(II)$.
\begin{align*}
\vcenter{\hbox{}}
&\stackrel{\rm(II)}\longrightarrow
\vcenter{\hbox{
\begingroup%
  \makeatletter%
  \providecommand\color[2][]{%
    \errmessage{(Inkscape) Color is used for the text in Inkscape, but the package 'color.sty' is not loaded}%
    \renewcommand\color[2][]{}%
  }%
  \providecommand\transparent[1]{%
    \errmessage{(Inkscape) Transparency is used (non-zero) for the text in Inkscape, but the package 'transparent.sty' is not loaded}%
    \renewcommand\transparent[1]{}%
  }%
  \providecommand\rotatebox[2]{#2}%
  \ifx\svgwidth\undefined%
    \setlength{\unitlength}{172.50000238bp}%
    \ifx\svgscale\undefined%
      \relax%
    \else%
      \setlength{\unitlength}{\unitlength * \real{\svgscale}}%
    \fi%
  \else%
    \setlength{\unitlength}{\svgwidth}%
  \fi%
  \global\let\svgwidth\undefined%
  \global\let\svgscale\undefined%
  \makeatother%
  \begin{picture}(1,0.28695659)%
    \put(0,0){\includegraphics[width=\unitlength,page=1]{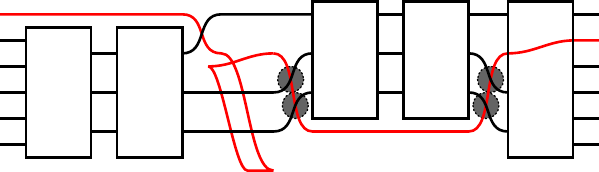}}%
    \put(0.0652174,0.08913049){\color[rgb]{0,0,0}\makebox(0,0)[lb]{\smash{$\sL'_L$}}}%
    \put(0.86956517,0.08913049){\color[rgb]{0,0,0}\makebox(0,0)[lb]{\smash{$\sL_R$}}}%
    \put(0.2173913,0.08913049){\color[rgb]{0,0,0}\makebox(0,0)[lb]{\smash{$\sL'_{\beta}$}}}%
    \put(0.54347816,0.13260992){\color[rgb]{0,0,0}\makebox(0,0)[lb]{\smash{$\sL'_{\beta^c}$}}}%
    \put(0.69565222,0.13260892){\color[rgb]{0,0,0}\makebox(0,0)[lb]{\smash{$\sL'_{\bar{\beta^c}}$}}}%
  \end{picture}%
\endgroup%
}}
\end{align*}
\item Move the right long arc to the right most position by Lemma~\ref{lemma:black pure braid commutes} and Lemma~\ref{lemma:marked cusp commutes}.
\begin{align*}
\vcenter{\hbox{}}
&\stackrel{\rm(A)}\longrightarrow
\vcenter{\hbox{
\begingroup%
  \makeatletter%
  \providecommand\color[2][]{%
    \errmessage{(Inkscape) Color is used for the text in Inkscape, but the package 'color.sty' is not loaded}%
    \renewcommand\color[2][]{}%
  }%
  \providecommand\transparent[1]{%
    \errmessage{(Inkscape) Transparency is used (non-zero) for the text in Inkscape, but the package 'transparent.sty' is not loaded}%
    \renewcommand\transparent[1]{}%
  }%
  \providecommand\rotatebox[2]{#2}%
  \ifx\svgwidth\undefined%
    \setlength{\unitlength}{168.7500031bp}%
    \ifx\svgscale\undefined%
      \relax%
    \else%
      \setlength{\unitlength}{\unitlength * \real{\svgscale}}%
    \fi%
  \else%
    \setlength{\unitlength}{\svgwidth}%
  \fi%
  \global\let\svgwidth\undefined%
  \global\let\svgscale\undefined%
  \makeatother%
  \begin{picture}(1,0.31555566)%
    \put(0,0){\includegraphics[width=\unitlength,page=1]{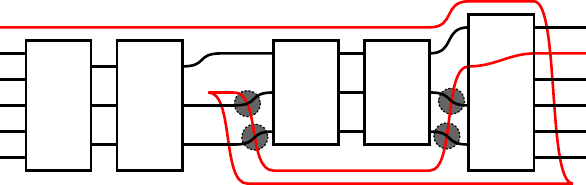}}%
    \put(0.06666667,0.09111117){\color[rgb]{0,0,0}\makebox(0,0)[lb]{\smash{$\sL'_L$}}}%
    \put(0.82222229,0.09111117){\color[rgb]{0,0,0}\makebox(0,0)[lb]{\smash{$\sL_R$}}}%
    \put(0.22222221,0.09111117){\color[rgb]{0,0,0}\makebox(0,0)[lb]{\smash{$\sL'_{\beta}$}}}%
    \put(0.48888887,0.11333424){\color[rgb]{0,0,0}\makebox(0,0)[lb]{\smash{$\sL'_{\beta^c}$}}}%
    \put(0.6444446,0.11333322){\color[rgb]{0,0,0}\makebox(0,0)[lb]{\smash{$\sL'_{\bar{\beta^c}}$}}}%
  \end{picture}%
\endgroup%
}}
\end{align*}
\item Reduce the left cusp by applying $\rm(II)$ and $(0)$.
\begin{align*}
\vcenter{\hbox{}}
&\substack{{\rm(II)}\\\longrightarrow\\\rm(0)}
\vcenter{\hbox{
\begingroup%
  \makeatletter%
  \providecommand\color[2][]{%
    \errmessage{(Inkscape) Color is used for the text in Inkscape, but the package 'color.sty' is not loaded}%
    \renewcommand\color[2][]{}%
  }%
  \providecommand\transparent[1]{%
    \errmessage{(Inkscape) Transparency is used (non-zero) for the text in Inkscape, but the package 'transparent.sty' is not loaded}%
    \renewcommand\transparent[1]{}%
  }%
  \providecommand\rotatebox[2]{#2}%
  \ifx\svgwidth\undefined%
    \setlength{\unitlength}{153.7500022bp}%
    \ifx\svgscale\undefined%
      \relax%
    \else%
      \setlength{\unitlength}{\unitlength * \real{\svgscale}}%
    \fi%
  \else%
    \setlength{\unitlength}{\svgwidth}%
  \fi%
  \global\let\svgwidth\undefined%
  \global\let\svgscale\undefined%
  \makeatother%
  \begin{picture}(1,0.34634158)%
    \put(0,0){\includegraphics[width=\unitlength,page=1]{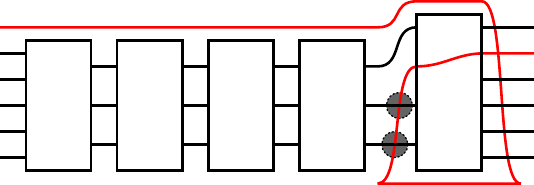}}%
    \put(0.07317073,0.10000006){\color[rgb]{0,0,0}\makebox(0,0)[lb]{\smash{$\sL'_L$}}}%
    \put(0.80487796,0.10000006){\color[rgb]{0,0,0}\makebox(0,0)[lb]{\smash{$\sL_R$}}}%
    \put(0.24390242,0.10000006){\color[rgb]{0,0,0}\makebox(0,0)[lb]{\smash{$\sL'_{\beta}$}}}%
    \put(0.41463394,0.10000062){\color[rgb]{0,0,0}\makebox(0,0)[lb]{\smash{$\sL'_{\beta^c}$}}}%
    \put(0.58536584,0.0999995){\color[rgb]{0,0,0}\makebox(0,0)[lb]{\smash{$\sL'_{\bar{\beta^c}}$}}}%
  \end{picture}%
\endgroup%
}}
\end{align*}
\item Apply $\rm(S)$ to make a standard form.
\begin{align*}
\vcenter{\hbox{}}
&\stackrel{\rm(S)}\longrightarrow
\vcenter{\hbox{
\begingroup%
  \makeatletter%
  \providecommand\color[2][]{%
    \errmessage{(Inkscape) Color is used for the text in Inkscape, but the package 'color.sty' is not loaded}%
    \renewcommand\color[2][]{}%
  }%
  \providecommand\transparent[1]{%
    \errmessage{(Inkscape) Transparency is used (non-zero) for the text in Inkscape, but the package 'transparent.sty' is not loaded}%
    \renewcommand\transparent[1]{}%
  }%
  \providecommand\rotatebox[2]{#2}%
  \ifx\svgwidth\undefined%
    \setlength{\unitlength}{165.00000217bp}%
    \ifx\svgscale\undefined%
      \relax%
    \else%
      \setlength{\unitlength}{\unitlength * \real{\svgscale}}%
    \fi%
  \else%
    \setlength{\unitlength}{\svgwidth}%
  \fi%
  \global\let\svgwidth\undefined%
  \global\let\svgscale\undefined%
  \makeatother%
  \begin{picture}(1,0.36590868)%
    \put(0,0){\includegraphics[width=\unitlength,page=1]{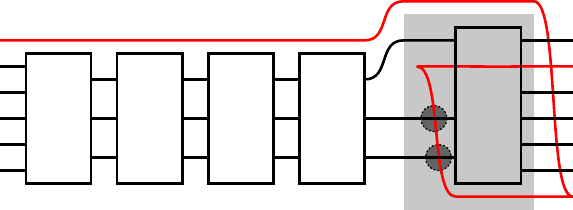}}%
    \put(0.06818182,0.11363601){\color[rgb]{0,0,0}\makebox(0,0)[lb]{\smash{$\sL'_L$}}}%
    \put(0.81818181,0.11363601){\color[rgb]{0,0,0}\makebox(0,0)[lb]{\smash{$\sL_R$}}}%
    \put(0.22727271,0.11363601){\color[rgb]{0,0,0}\makebox(0,0)[lb]{\smash{$\sL'_{\beta}$}}}%
    \put(0.38636344,0.11363654){\color[rgb]{0,0,0}\makebox(0,0)[lb]{\smash{$\sL'_{\beta^c}$}}}%
    \put(0.54545454,0.11363549){\color[rgb]{0,0,0}\makebox(0,0)[lb]{\smash{$\sL'_{\bar{\beta^c}}$}}}%
  \end{picture}%
\endgroup%
}}
\end{align*}
\item Then we can regard the result as a $\phi$-resolution $\sfv'_\phi$ of a vertex $\sfv'$ of type $(\ell-1,r+1)$.
\begin{align*}
\sfv'_\phi=\vcenter{\hbox{
\begingroup%
  \makeatletter%
  \providecommand\color[2][]{%
    \errmessage{(Inkscape) Color is used for the text in Inkscape, but the package 'color.sty' is not loaded}%
    \renewcommand\color[2][]{}%
  }%
  \providecommand\transparent[1]{%
    \errmessage{(Inkscape) Transparency is used (non-zero) for the text in Inkscape, but the package 'transparent.sty' is not loaded}%
    \renewcommand\transparent[1]{}%
  }%
  \providecommand\rotatebox[2]{#2}%
  \ifx\svgwidth\undefined%
    \setlength{\unitlength}{176.62500181bp}%
    \ifx\svgscale\undefined%
      \relax%
    \else%
      \setlength{\unitlength}{\unitlength * \real{\svgscale}}%
    \fi%
  \else%
    \setlength{\unitlength}{\svgwidth}%
  \fi%
  \global\let\svgwidth\undefined%
  \global\let\svgscale\undefined%
  \makeatother%
  \begin{picture}(1,0.38641188)%
    \put(0,0){\includegraphics[width=\unitlength,page=1]{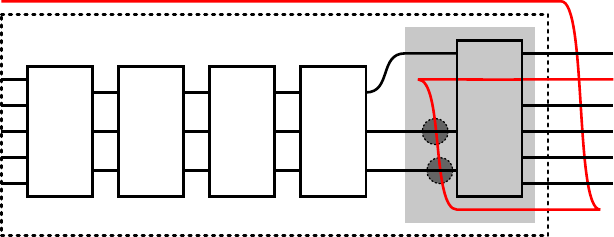}}%
    \put(0.06581741,0.12951168){\color[rgb]{0,0,0}\makebox(0,0)[lb]{\smash{$\sL'_L$}}}%
    \put(0.76645434,0.12951168){\color[rgb]{0,0,0}\makebox(0,0)[lb]{\smash{$\sL_R$}}}%
    \put(0.21443735,0.12951168){\color[rgb]{0,0,0}\makebox(0,0)[lb]{\smash{$\sL'_{\beta}$}}}%
    \put(0.36305715,0.12951217){\color[rgb]{0,0,0}\makebox(0,0)[lb]{\smash{$\sL'_{\beta^c}$}}}%
    \put(0.51167728,0.12951119){\color[rgb]{0,0,0}\makebox(0,0)[lb]{\smash{$\sL'_{\bar{\beta^c}}$}}}%
  \end{picture}%
\endgroup%
}}
\stackrel{\phi}\longleftarrow
\vcenter{\hbox{\includegraphics{R5_1_2.pdf}}}.
\end{align*}
\end{enumerate}

Now suppose that $\alpha$ is a marked cusp.
Then the proof is exactly the same as the reverse of the above by using Lemma~\ref{lemma:delta conjugate} as follows:
\begin{enumerate}
\item Apply Lemma~\ref{lemma:delta conjugate} to $\sL_{\beta}\sL_{\beta^c}\sL_{\bar{\beta^c}}$.
\begin{align*}
\sfv_\phi=\vcenter{\hbox{}}
&\stackrel{\rm(I)}\longrightarrow
\vcenter{\hbox{
\begingroup%
  \makeatletter%
  \providecommand\color[2][]{%
    \errmessage{(Inkscape) Color is used for the text in Inkscape, but the package 'color.sty' is not loaded}%
    \renewcommand\color[2][]{}%
  }%
  \providecommand\transparent[1]{%
    \errmessage{(Inkscape) Transparency is used (non-zero) for the text in Inkscape, but the package 'transparent.sty' is not loaded}%
    \renewcommand\transparent[1]{}%
  }%
  \providecommand\rotatebox[2]{#2}%
  \ifx\svgwidth\undefined%
    \setlength{\unitlength}{153.75000631bp}%
    \ifx\svgscale\undefined%
      \relax%
    \else%
      \setlength{\unitlength}{\unitlength * \real{\svgscale}}%
    \fi%
  \else%
    \setlength{\unitlength}{\svgwidth}%
  \fi%
  \global\let\svgwidth\undefined%
  \global\let\svgscale\undefined%
  \makeatother%
  \begin{picture}(1,0.32195127)%
    \put(0,0){\includegraphics[width=\unitlength,page=1]{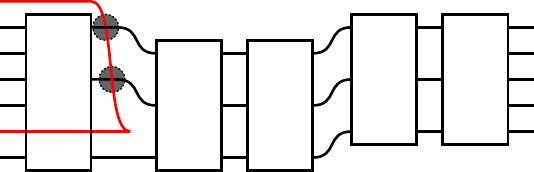}}%
    \put(0.07317073,0.07560975){\color[rgb]{0,0,0}\makebox(0,0)[lb]{\smash{$\sL'_L$}}}%
    \put(0.29268291,0.0512197){\color[rgb]{0,0,0}\makebox(0,0)[lb]{\smash{\tiny$\sL_{\tau(\bar{\beta^c})}$}}}%
    \put(0.46341461,0.0512197){\color[rgb]{0,0,0}\makebox(0,0)[lb]{\smash{\tiny$\sL_{\tau(\beta^c)}$}}}%
    \put(0.85365873,0.10000037){\color[rgb]{0,0,0}\makebox(0,0)[lb]{\smash{$\sL_R$}}}%
    \put(0.6829268,0.09999981){\color[rgb]{0,0,0}\makebox(0,0)[lb]{\smash{$\sL_{\beta}$}}}%
  \end{picture}%
\endgroup%
}}
\end{align*}
\item Apply $\rm(S)$ to the cusp.
\begin{align*}
\vcenter{\hbox{}}
&\stackrel{\rm(I)}\longrightarrow
\vcenter{\hbox{
\begingroup%
  \makeatletter%
  \providecommand\color[2][]{%
    \errmessage{(Inkscape) Color is used for the text in Inkscape, but the package 'color.sty' is not loaded}%
    \renewcommand\color[2][]{}%
  }%
  \providecommand\transparent[1]{%
    \errmessage{(Inkscape) Transparency is used (non-zero) for the text in Inkscape, but the package 'transparent.sty' is not loaded}%
    \renewcommand\transparent[1]{}%
  }%
  \providecommand\rotatebox[2]{#2}%
  \ifx\svgwidth\undefined%
    \setlength{\unitlength}{153.75000631bp}%
    \ifx\svgscale\undefined%
      \relax%
    \else%
      \setlength{\unitlength}{\unitlength * \real{\svgscale}}%
    \fi%
  \else%
    \setlength{\unitlength}{\svgwidth}%
  \fi%
  \global\let\svgwidth\undefined%
  \global\let\svgscale\undefined%
  \makeatother%
  \begin{picture}(1,0.32195127)%
    \put(0,0){\includegraphics[width=\unitlength,page=1]{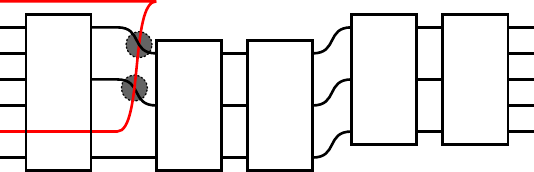}}%
    \put(0.07317073,0.07560975){\color[rgb]{0,0,0}\makebox(0,0)[lb]{\smash{$\sL'_L$}}}%
    \put(0.29268291,0.0512197){\color[rgb]{0,0,0}\makebox(0,0)[lb]{\tiny\smash{$\sL_{\tau(\bar{\beta^c})}$}}}%
    \put(0.46341461,0.0512197){\color[rgb]{0,0,0}\makebox(0,0)[lb]{\tiny\smash{$\sL_{\tau(\beta^c)}$}}}%
    \put(0.85365873,0.10000037){\color[rgb]{0,0,0}\makebox(0,0)[lb]{\smash{$\sL_R$}}}%
    \put(0.6829268,0.09999981){\color[rgb]{0,0,0}\makebox(0,0)[lb]{\smash{$\sL_{\beta}$}}}%
  \end{picture}%
\endgroup%
}}
\end{align*}
\item Push and pull down the cusp via $(0)$ and $\rm(II)$.
\begin{align*}
\vcenter{\hbox{}}
&\substack{{\rm(0)}\\\longrightarrow\\\rm(II)}
\vcenter{\hbox{
\begingroup%
  \makeatletter%
  \providecommand\color[2][]{%
    \errmessage{(Inkscape) Color is used for the text in Inkscape, but the package 'color.sty' is not loaded}%
    \renewcommand\color[2][]{}%
  }%
  \providecommand\transparent[1]{%
    \errmessage{(Inkscape) Transparency is used (non-zero) for the text in Inkscape, but the package 'transparent.sty' is not loaded}%
    \renewcommand\transparent[1]{}%
  }%
  \providecommand\rotatebox[2]{#2}%
  \ifx\svgwidth\undefined%
    \setlength{\unitlength}{168.75004123bp}%
    \ifx\svgscale\undefined%
      \relax%
    \else%
      \setlength{\unitlength}{\unitlength * \real{\svgscale}}%
    \fi%
  \else%
    \setlength{\unitlength}{\svgwidth}%
  \fi%
  \global\let\svgwidth\undefined%
  \global\let\svgscale\undefined%
  \makeatother%
  \begin{picture}(1,0.29333332)%
    \put(0,0){\includegraphics[width=\unitlength,page=1]{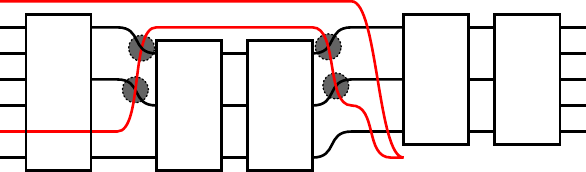}}%
    \put(0.06666665,0.06888887){\color[rgb]{0,0,0}\makebox(0,0)[lb]{\smash{$\sL'_L$}}}%
    \put(0.2666666,0.04666683){\color[rgb]{0,0,0}\makebox(0,0)[lb]{\tiny\smash{$\sL_{\tau(\bar{\beta^c})}$}}}%
    \put(0.42222212,0.04666683){\color[rgb]{0,0,0}\makebox(0,0)[lb]{\tiny\smash{$\sL_{\tau(\beta^c)}$}}}%
    \put(0.86666688,0.09111246){\color[rgb]{0,0,0}\makebox(0,0)[lb]{\smash{$\sL_R$}}}%
    \put(0.71111115,0.09111194){\color[rgb]{0,0,0}\makebox(0,0)[lb]{\smash{$\sL_{\beta}$}}}%
  \end{picture}%
\endgroup%
}}
\end{align*}
\item Move the long right arc to the right most position by Lemma~\ref{lemma:black pure braid commutes} and Lemma~\ref{lemma:marked cusp commutes}.
\begin{align*}
\vcenter{\hbox{}}
&\stackrel{\rm(B)}\longrightarrow
\vcenter{\hbox{
\begingroup%
  \makeatletter%
  \providecommand\color[2][]{%
    \errmessage{(Inkscape) Color is used for the text in Inkscape, but the package 'color.sty' is not loaded}%
    \renewcommand\color[2][]{}%
  }%
  \providecommand\transparent[1]{%
    \errmessage{(Inkscape) Transparency is used (non-zero) for the text in Inkscape, but the package 'transparent.sty' is not loaded}%
    \renewcommand\transparent[1]{}%
  }%
  \providecommand\rotatebox[2]{#2}%
  \ifx\svgwidth\undefined%
    \setlength{\unitlength}{176.25000398bp}%
    \ifx\svgscale\undefined%
      \relax%
    \else%
      \setlength{\unitlength}{\unitlength * \real{\svgscale}}%
    \fi%
  \else%
    \setlength{\unitlength}{\svgwidth}%
  \fi%
  \global\let\svgwidth\undefined%
  \global\let\svgscale\undefined%
  \makeatother%
  \begin{picture}(1,0.3021276)%
    \put(0,0){\includegraphics[width=\unitlength,page=1]{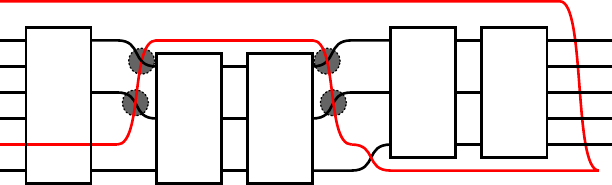}}%
    \put(0.06382979,0.06595745){\color[rgb]{0,0,0}\makebox(0,0)[lb]{\smash{$\sL'_L$}}}%
    \put(0.25531914,0.04468101){\color[rgb]{0,0,0}\makebox(0,0)[lb]{\tiny\smash{$\sL_{\tau(\bar{\beta^c})}$}}}%
    \put(0.40425531,0.04468101){\color[rgb]{0,0,0}\makebox(0,0)[lb]{\tiny\smash{$\sL_{\tau(\beta^c)}$}}}%
    \put(0.80851082,0.08723535){\color[rgb]{0,0,0}\makebox(0,0)[lb]{\smash{$\sL_R$}}}%
    \put(0.65957464,0.08723486){\color[rgb]{0,0,0}\makebox(0,0)[lb]{\smash{$\sL_{\beta}$}}}%
  \end{picture}%
\endgroup%
}}
\end{align*}
\item Apply Lemma~\ref{lemma:delta conjugate} again.
\begin{align*}
\vcenter{\hbox{
\begingroup%
  \makeatletter%
  \providecommand\color[2][]{%
    \errmessage{(Inkscape) Color is used for the text in Inkscape, but the package 'color.sty' is not loaded}%
    \renewcommand\color[2][]{}%
  }%
  \providecommand\transparent[1]{%
    \errmessage{(Inkscape) Transparency is used (non-zero) for the text in Inkscape, but the package 'transparent.sty' is not loaded}%
    \renewcommand\transparent[1]{}%
  }%
  \providecommand\rotatebox[2]{#2}%
  \ifx\svgwidth\undefined%
    \setlength{\unitlength}{176.25000398bp}%
    \ifx\svgscale\undefined%
      \relax%
    \else%
      \setlength{\unitlength}{\unitlength * \real{\svgscale}}%
    \fi%
  \else%
    \setlength{\unitlength}{\svgwidth}%
  \fi%
  \global\let\svgwidth\undefined%
  \global\let\svgscale\undefined%
  \makeatother%
  \begin{picture}(1,0.32553184)%
    \put(0,0){\includegraphics[width=\unitlength,page=1]{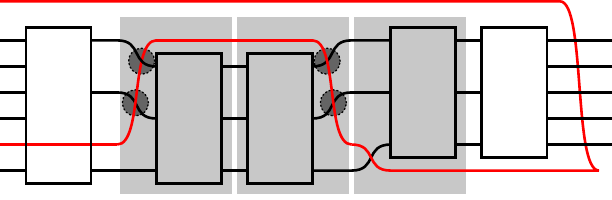}}%
    \put(0.06382979,0.08936169){\color[rgb]{0,0,0}\makebox(0,0)[lb]{\smash{$\sL'_L$}}}%
    \put(0.25531914,0.06808526){\color[rgb]{0,0,0}\makebox(0,0)[lb]{\tiny\smash{$\sL_{\tau(\bar{\beta^c})}$}}}%
    \put(0.40425531,0.06808526){\color[rgb]{0,0,0}\makebox(0,0)[lb]{\tiny\smash{$\sL_{\tau(\beta^c)}$}}}%
    \put(0.80851082,0.11063959){\color[rgb]{0,0,0}\makebox(0,0)[lb]{\smash{$\sL_R$}}}%
    \put(0.65957464,0.1106391){\color[rgb]{0,0,0}\makebox(0,0)[lb]{\smash{$\sL_{\beta}$}}}%
  \end{picture}%
\endgroup%
}}
&\stackrel{\rm(B)}\longrightarrow
\vcenter{\hbox{
\begingroup%
  \makeatletter%
  \providecommand\color[2][]{%
    \errmessage{(Inkscape) Color is used for the text in Inkscape, but the package 'color.sty' is not loaded}%
    \renewcommand\color[2][]{}%
  }%
  \providecommand\transparent[1]{%
    \errmessage{(Inkscape) Transparency is used (non-zero) for the text in Inkscape, but the package 'transparent.sty' is not loaded}%
    \renewcommand\transparent[1]{}%
  }%
  \providecommand\rotatebox[2]{#2}%
  \ifx\svgwidth\undefined%
    \setlength{\unitlength}{176.25000398bp}%
    \ifx\svgscale\undefined%
      \relax%
    \else%
      \setlength{\unitlength}{\unitlength * \real{\svgscale}}%
    \fi%
  \else%
    \setlength{\unitlength}{\svgwidth}%
  \fi%
  \global\let\svgwidth\undefined%
  \global\let\svgscale\undefined%
  \makeatother%
  \begin{picture}(1,0.32553203)%
    \put(0,0){\includegraphics[width=\unitlength,page=1]{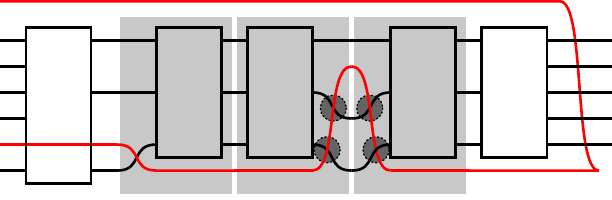}}%
    \put(0.06382979,0.08936187){\color[rgb]{0,0,0}\makebox(0,0)[lb]{\smash{$\sL'_L$}}}%
    \put(0.6595747,0.11063831){\color[rgb]{0,0,0}\makebox(0,0)[lb]{\smash{$\sL_{\bar{\beta^c}}$}}}%
    \put(0.42553189,0.11063831){\color[rgb]{0,0,0}\makebox(0,0)[lb]{\smash{$\sL_{\beta^c}$}}}%
    \put(0.80851082,0.11063978){\color[rgb]{0,0,0}\makebox(0,0)[lb]{\smash{$\sL_R$}}}%
    \put(0.27659595,0.1106388){\color[rgb]{0,0,0}\makebox(0,0)[lb]{\smash{$\sL_{\beta}$}}}%
  \end{picture}%
\endgroup%
}}
\end{align*}
\item Regard the result as a $\phi$-resolution $\sfv'_\phi$ of a vertex $\sfv'$ of type $(\ell-1,r+1)$ as before.
\begin{align*}
\vcenter{\hbox{
\begingroup%
  \makeatletter%
  \providecommand\color[2][]{%
    \errmessage{(Inkscape) Color is used for the text in Inkscape, but the package 'color.sty' is not loaded}%
    \renewcommand\color[2][]{}%
  }%
  \providecommand\transparent[1]{%
    \errmessage{(Inkscape) Transparency is used (non-zero) for the text in Inkscape, but the package 'transparent.sty' is not loaded}%
    \renewcommand\transparent[1]{}%
  }%
  \providecommand\rotatebox[2]{#2}%
  \ifx\svgwidth\undefined%
    \setlength{\unitlength}{176.62500375bp}%
    \ifx\svgscale\undefined%
      \relax%
    \else%
      \setlength{\unitlength}{\unitlength * \real{\svgscale}}%
    \fi%
  \else%
    \setlength{\unitlength}{\svgwidth}%
  \fi%
  \global\let\svgwidth\undefined%
  \global\let\svgscale\undefined%
  \makeatother%
  \begin{picture}(1,0.32271758)%
    \put(0,0){\includegraphics[width=\unitlength,page=1]{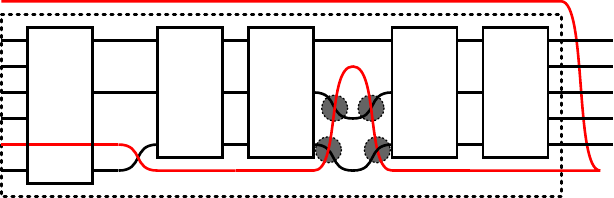}}%
    \put(0.06581741,0.08704885){\color[rgb]{0,0,0}\makebox(0,0)[lb]{\smash{$\sL'_L$}}}%
    \put(0.66029747,0.10828011){\color[rgb]{0,0,0}\makebox(0,0)[lb]{\smash{$\sL_{\bar{\beta^c}}$}}}%
    \put(0.42675157,0.10828011){\color[rgb]{0,0,0}\makebox(0,0)[lb]{\smash{$\sL_{\beta^c}$}}}%
    \put(0.80891738,0.10828158){\color[rgb]{0,0,0}\makebox(0,0)[lb]{\smash{$\sL_R$}}}%
    \put(0.27813184,0.1082806){\color[rgb]{0,0,0}\makebox(0,0)[lb]{\smash{$\sL_{\beta}$}}}%
  \end{picture}%
\endgroup%
}}
\stackrel{\phi}\longleftarrow
\vcenter{\hbox{\includegraphics{R5_1_2.pdf}}}.
\end{align*}
\end{enumerate}

\bibliographystyle{abbrv}
\bibliography{references}

\end{document}